\documentclass[11pt]{article}
\setlength{\voffset}{-.75truein}
\setlength{\textheight}{9truein}
\setlength{\textwidth}{6.5truein}
\setlength{\hoffset}{-.7truein}

\newcommand{\version}{July. 16, 2025 }

\usepackage{amsthm,amsfonts,amsmath, amscd,dsfont}
\swapnumbers
                                %
                                %

\pagestyle{myheadings}

                                %
\theoremstyle{plain}

\newtheorem{thm}{THEOREM}[section]
\newtheorem{lm}[thm]{LEMMA}

\theoremstyle{definition}
\newtheorem{defi}[thm]{DEFINITION}
\theoremstyle{definition}
\newtheorem{remark}[thm]{Remark}
\newcommand{\upchi}{\raise1pt\hbox{$\chi$}}
\newcommand{\R}{{\mathord{\mathbb R}}}

\newcommand{\N}{{\mathord{\mathbb N}}}

\newcommand{\cA}{{\mathord{\cal A}}}

\newcommand{\K}{{\mathcal{K}}}

\newcommand{\cH}{{\mathcal{H}}}

\renewcommand{\|}{{\Vert}}
\numberwithin{equation}{section}
\pagestyle{myheadings} \sloppy

\def\E{{\cal E}_{\gamma,N,E}}

\def\dd{{\rm d}}
\def\ncht{\left(\begin{matrix} N\cr 2\cr \end{matrix}\right)}
\def\nmcht{\left(\begin{matrix} N-1\cr 2\cr \end{matrix}\right)}
\def\ST{{\mathcal S}_N}
\def\STE{{\mathcal S}_{N,E}}

\def\LN{L_{\gamma,N,E}}

\def\bset{{\boldsymbol \eta}}
\def\scF{{\mathcal F}}

\usepackage{color}

\newcommand{\one}{{\mathds1}}
\newcommand{\Dto}{\widetilde{\mathcal{G}}_{\gamma,N}}

\begin{document}

\markboth{\scriptsize{CPT \version}}{\scriptsize{CPT \version}}

\title{Spectral Gap for the Stochastic Exchange Model}

\author{\vspace{5pt} Eric A. Carlen$^{1}$,  Gustavo Posta$^{2}$,  and Imre P\'eter T\'oth$^{3}$ \\
\vspace{5pt}\small{$1.$ Department of Mathematics, Hill Center, Rutgers University}\\[-6pt]
\small{
110 Frelinghuysen Road
Piscataway NJ 08854 USA}\\
\vspace{5pt}\small{$2.$ Dipartimento di Matematica G. Castelnuovo, Sapienza Universit\`a di Roma,}
\\[-6pt] \small{Piazzale Aldo Moro, 5,
I-00185 Roma, Italy}\\
\vspace{5pt}\small{$3.$  Department of Stochastics, TU Budapest,} \\[-6pt]
\small{Egry József u. 1, H-1111, Budapest, Hungary}\\
 }
\date{\today}

\maketitle

\def\dd{{\rm d}}
\def\ncht{\left(\begin{matrix} N\cr 2\cr \end{matrix}\right)}
\def\nmcht{\left(\begin{matrix} N-1\cr 2\cr \end{matrix}\right)}

\begin{abstract} 
We prove a spectral gap inequality for the stochastic exchange model studied by Gaspard and Gilbert and by Grigo, Khanin and Sz\'asz in connection with understanding heat conduction in a deterministic billiards model. The bound on the spectral gap that we prove  is uniform in the number of particles, as had been conjectured. We adapt techniques that were originally developed to prove spectral gap bounds for the Kac model with hard sphere collisions, which, like the stochastic exchange model, has degenerate jump rates.

\end{abstract}

\medskip
\leftline{\footnotesize{\qquad Mathematics subject classification numbers:  60J25, 60J46, 45C05}}
\leftline{\footnotesize{\qquad Key Words: spectral gap, Markov jump process, degenerate rates}}

\section{Introduction} \label{intro}

\subsection{The stochastic exchange model}

The stochastic exchange model was studied in  \cite{GG08,GG09} and in \cite{GKS12} in order to understand how a deterministic billiards model can give rise to a diffusive law for heat conduction. It was also later studied by Sasada \cite{S15}. In this earlier work, the problem of proving the relevant  spectral gap bound for a stochastic exchange model with nearest-neighbor interactions was successfully reduced to the problem of proving a spectral gap bound for a mean-field model that is uniform in $N$, the number of particles \cite{S15}. However, a complete proof of this last bound had been missing and it is provided here. 

For $N\in \N$ and $E > 0$,  let $\STE$ be the set
consisting of all vectors   $\bset = (\eta_1,\dots , \eta_N)\in \R_+^N$  with
$$\frac1N\sum_{j=1}^N\eta_j = E \\ .$$
A point $\bset\in \STE$  specifies the energies of a collection of $N$ particles, and 
$\STE$ is the {\em state space} for $N$ particles with mean energy $E$ per particle.  The stochastic exchange model that we consider  is a continuous time random walk on $\STE$
 in which each step of the walk corresponds to a binary collision of a single pair of particles.
 With each collision, 
 the state of the process ``jumps'' from $ (\eta_1,\dots , \eta_N)$ to
$$ (\eta_1,\eta_2,\dots,\eta_i^*,\dots,\eta_j^*,\dots,\eta_N)\ .$$
In the stochastic exchange model, the two active particles repartition their total energy $\eta_i + \eta_j$ into two new energies $\eta_i^*$ and $\eta_j^*$, dividing the total energy uniformly at random -- the stochastic exchange. Then of course 
\begin{equation}\label{kinpos}
\eta_i^*+\eta_j^* = \eta_i+\eta_j\ .
\end{equation}

The configuration of the energies after the stochastic exchange of energy between particles $i$ and $j$ will be $\bset^*_{i,j,\alpha}$, for some randomly chosen $\alpha\in[0,1]$ where
\begin{equation}\label{POSTCOL}
(\bset^*_{i,j,\alpha})_k = \begin{cases} (1-\alpha)\eta_i+\alpha \eta_j & k = i\\   \alpha \eta_i+(1-\alpha)\eta_j & k = j\\   \eta_k & k\neq i,j\ .\end{cases}\ 
\end{equation}

\medskip

The jump process is then fully specified by giving the rate at which these pairwise stochastic exchanges occur. 
Associate to each pair $(i,j)$,  $i<j$, an exponential random variable  $T_{i,j}$ with parameter
\begin{equation}\label{jumprate}
\lambda_{i,j} =  N\ncht^{-1}(\eta_i + \eta_j)^{\gamma}\ ,
\end{equation}
where $0 \leq \gamma \leq 1$. In more detail, the $T_{i,j}$ are independent, and
$\mathbb{P}\{ T_{i,j} > t\} = e^{-t\lambda_{i,j}}$.
$T_{i,j}$ is the waiting time for  particles $i$ and $j$ to collide.
 The first collision occurs at time
\begin{equation}\label{alarm}
T = \min_{i<j}\{T_{i,j}\}\ .
\end{equation}

At the time $T$, the pair $(i,j)$ furnishing the minimum exchange energy in the prescribed way:   With $\alpha\in [0,1]$  selected uniformly at random,  the process jumps from
$ \bset$ to
$ \bset^*_{i,j,\alpha}$.  After each collision, the process starts afresh.
Let $\bset(t)$ denote the random state of the process at time $t$.

The object of our investigation is the spectral gap for the generator of the Markov semigroup associated to the stochastic exchange model described above.
For any continuous  function $f $ on $\STE$,
define the Kac walk generator $\LN$ by
$$\LN f ( \bset) = \frac1h \lim_{h\downarrow 0}{\mathbb E}\{ f ( \bset(h)) - f(\bset)\ |\  \bset(0) =  \bset\ \} \ ,$$
where the conditional expectation is taken with respect to the law of the process. 
The case $\gamma = 1/2$ models  the case of main physical interest, but the analysis of this case draws on the analysis for $\gamma =0$ and $\gamma =1$, and in the approach taken here, $\gamma =0$ and $\gamma =1$ are relatively simple, but for the rest, there is nothing to be gained by focusing on any one value of $\gamma\in (0,1)$.

One readily computes that
\begin{equation}
\LN f ( \bset) =  -{N}{\ncht}^{-1}\sum_{i<j} (\eta_i+\eta_j)^{\gamma}\
 \int_{0}^1 [f ( \bset) - f (\bset^*_{i,j,\alpha}))] \dd \alpha\ .
\end{equation}
Introducing the notation
${\displaystyle 
 [f ]^{(i,j)}( \bset)  :=    \int_{0}^1 f (\bset^*_{i,j,\alpha}) \dd \alpha\ ,
}$
the generator can be written as
\begin{equation}\label{lndef2}
\LN f ( \bset) =  -{N}{\ncht}^{-1}\sum_{i<j} (\eta_i+\eta_j)^{\gamma}\left[f ( \bset) - [f ]^{(i,j)}( \bset) \right]  \ .
\end{equation}

There is a somewhat more probabilistic way to write this: Let $\sigma_{N,E}$ denote the uniform probability measure on $\STE$. 
For $i < j$, let
$\scF_{i,j}$ be the $\sigma$-algebra $\scF_{i,j} := \sigma\{ \eta_k \:\ k\neq i,j\}$ .
Then 
$[f ]^{(i,j)} = {\mathbb E}\{ f | \scF_{i,j}\}$ where the conditional expectation is taken with respect to $\sigma_{N,E}$.

Let $\mathcal{H}_{N,E}$ denote the Hilbert space  $L^2(\STE,\sigma_{N,E})$.   One readily checks that $\LN$ is self-adjoint and bounded on $\mathcal{H}_{N,E}$. For $f,g\in \mathcal{H}_{N,E}$, we denote the inner product of $f$ and $g$ by $\langle f,g\rangle$ and the
norm of $f$ by $\|f\|_2$.
More generally, for a measurable function $f$ on $\STE$ and $1 \leq p \leq \infty$, $\|f\|_p$ denote the norm of $f$ in $L^p(\STE,\sigma_{N,E})$.

Let $\E$ be the Dirichlet form on  $\mathcal{H}_{N,E}$,\begin{equation}\label{dir3}
\E(f,f) = -\langle f,\LN f\rangle\ .
\end{equation}
Evidently, the uniform  distribution $\sigma_{N,E}$ is the unique equilibrium state for this process.

The object of our investigation, the {\em spectral gap}, is the quantity
\begin{equation}\label{gapdef}
\Delta_{\gamma,N,E} := \inf\{ \E(f,f) \ :\ f\in\mathcal{H}_{N,E}\ ,\ \|f\|_2 = 1\ ,\  \langle f,1\rangle =0\ \}\ .
\end{equation}

Because the jump rates defined in \eqref{jumprate} are homogeneous in $\bset\in \R_+^N$,  the dependence of $\Delta_{\gamma,N,E}$ on $E$ is a simple matter of scaling:

\begin{lm}\label{scale} For all $N>0$, $\gamma\in [0,1]$, and $E,E'>0$,
\begin{equation}\label{scaleB}
\Delta_{\gamma,N,E} = \left(\frac{E}{E'}\right)^\gamma \Delta_{\gamma,N,E'}\ .
\end{equation}
\end{lm}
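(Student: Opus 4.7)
The plan is to exhibit an isometry between $\mathcal{H}_{N,E}$ and $\mathcal{H}_{N,E'}$ that intertwines the generators up to the scalar factor $(E/E')^\gamma$, so the variational definition \eqref{gapdef} immediately gives the claimed scaling.

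Set $c = E/E'$ and define $\Phi \colon \mathcal{S}_{N,E'} \to \STE$ by $\Phi(\bset) = c\,\bset$. Since $\STE$ is obtained from $\mathcal{S}_{N,E'}$ by the dilation $\bset \mapsto c\,\bset$ of the affine simplex, $\Phi$ is a bijection that pushes the uniform probability measure $\sigma_{N,E'}$ forward to $\sigma_{N,E}$. Hence the pullback $U f := f \circ \Phi$ is a unitary map from $\mathcal{H}_{N,E}$ onto $\mathcal{H}_{N,E'}$, preserving the $L^2$ norm and the condition $\langle f,1\rangle = 0$.

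Next I would check that $\Phi$ intertwines the collision maps: because \eqref{POSTCOL} is linear in $\bset$, one has $\Phi(\bset^*_{i,j,\alpha}) = (\Phi\bset)^*_{i,j,\alpha}$, and therefore $[Uf]^{(i,j)}(\bset) = [f]^{(i,j)}(\Phi\bset)$. The jump rates in \eqref{jumprate} scale homogeneously of degree $\gamma$: if $\bset \in \mathcal{S}_{N,E'}$ then $(\Phi\bset)_i + (\Phi\bset)_j = c(\eta_i+\eta_j)$, so $\lambda_{i,j}(\Phi\bset) = c^\gamma \lambda_{i,j}(\bset)$. Plugging these two facts into the formula \eqref{lndef2} gives
\begin{equation*}
(\mathcal{L}_{\gamma,N,E} f)(\Phi\bset) = c^\gamma (\mathcal{L}_{\gamma,N,E'} Uf)(\bset)\ ,
\end{equation*}
i.e.\ $U \mathcal{L}_{\gamma,N,E} = c^{-\gamma} \mathcal{L}_{\gamma,N,E'} U$. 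Taking inner products and using the unitarity of $U$ yields the Dirichlet-form identity $\mathcal{E}_{\gamma,N,E}(f,f) = c^\gamma\, \mathcal{E}_{\gamma,N,E'}(Uf,Uf)$.

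Finally, since $U$ is a bijection on the set of unit-norm, mean-zero functions, taking the infimum in \eqref{gapdef} on each side gives $\Delta_{\gamma,N,E} = c^\gamma \Delta_{\gamma,N,E'}$, which is \eqref{scaleB}. There is no real obstacle here — the only thing to verify carefully is the pushforward of the uniform measure on the simplex, but this is immediate from the fact that $\Phi$ is an affine dilation. The argument hinges on two structural features of the model: the linearity of the collision rule and the $\gamma$-homogeneity of the rates.
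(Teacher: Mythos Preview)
Your proof is correct and follows essentially the same scaling/unitary-conjugation argument as the paper, which writes the Dirichlet-form identity directly rather than passing through the generator intertwining relation. One minor slip: your displayed equation actually gives $U\mathcal{L}_{\gamma,N,E} = c^{\gamma}\,\mathcal{L}_{\gamma,N,E'}U$, not $c^{-\gamma}$, but the subsequent Dirichlet-form identity $\mathcal{E}_{\gamma,N,E}(f,f) = c^\gamma\,\mathcal{E}_{\gamma,N,E'}(Uf,Uf)$ and the conclusion are correct.
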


\begin{proof}
Suppose $f$ is any measurable function on ${\mathcal S}_{N,E'}$. Define $Uf$ by
${\displaystyle Uf( \bset) = f\left( \frac{E'}{E} \bset\right)}$.
Then, $U$ is unitary from $\cH_{N,E'}$ to $\cH_{N,E}$ because $\sigma_{N,E'}$ is the push-forward of $\sigma_{N,E}$ under the map $\bset \mapsto 
\frac{E'}{E} \bset$.
For the same reason,
\begin{equation}\label{scaling}
N{\ncht}^{-1}\sum_{i<j}\int_{{\mathcal S}_{N,E'}}(\eta_i+\eta_j)^\gamma (f - [f]^{(i,j)})^2\dd \sigma_{N,E'} = 
\left(\frac{E'}{E}\right)^\gamma \E(Uf,Uf) \ .
\end{equation}
Then \eqref{scaleB} follows from \eqref{scaling} and the definition \eqref{gapdef}.
\end{proof}
This scaling relation will be important in the inductive estimation of $\Delta_{\gamma,N,1}$ that follows.
Note that in the case of uniform jump rates; that is, $\gamma = 0$, the value of $E$ is immaterial.  
On account of Lemma~\ref{scale}, we simplify our notation: We need only determine the spectral gap for unit energy per particle, and then we know it in general. 
Therefore, we define 
\begin{equation*}
\mathcal {S}_{N} = \mathcal{S}_{N,1},\quad\sigma_N:=\sigma_{N,1},\quad L_{\gamma, N} := L_{\gamma,N,1},\quad\mathcal{E}_{\gamma,N}:=\mathcal{E}_{\gamma,N,1},\quad  \Delta_{\gamma,N} := \Delta_{\gamma,N,1},\quad \mathcal{H}_N:=\mathcal{H}_{N,1} \ .
\end{equation*}
Our goal is then to determine $\Delta_{\gamma,N}$. Our approach will be inductive on $N$. For $\gamma>0$, it is difficult to compute, or even closely estimate, $\Delta_{\gamma,3}$. However, for $N=2$, it is a simple matter to exactly compute the spectral gap for all $\gamma$. 
When $N=2$,  there  is only one  pair $i< j$ and $\eta_i + \eta_j =2$ for all $\bset\in \mathcal{S}_{2}$.
Then from \eqref{lndef2}
\begin{equation}\label{lndef2B}
L_{\gamma, 2} f ( \bset) =  -2^{1+\gamma}\left[f ( \bset) - {\mathbb E}(f) \right]  \ .
\end{equation}
If follows that the eigenvalues of $L_{\gamma,2}$ are $0$ and $-2^{1+\gamma}$ and the $0$ eigenspace is spanned by the constant function $1$, and the
$-2^{1+\gamma}$ eigenspace is spanned by the functions orthogonal to $1$.  We record this as a lemma:

\begin{lm}\label{Delta2}
$\Delta_{\gamma,2} = 2^{\gamma+1}$ for all  $\gamma \geq 0$.  
\end{lm}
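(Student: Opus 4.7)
The plan is to compute $L_{\gamma,2}$ explicitly and simply read off its spectrum: for $N=2$ the expression for the generator collapses dramatically because there is only one pair and the jump rate is constant on $\mathcal{S}_2$. Concretely, since every $\bset\in\mathcal{S}_2$ satisfies $\eta_1+\eta_2=2$, the factor $(\eta_i+\eta_j)^\gamma$ in the definition of $L_{\gamma,N}$ equals the constant $2^\gamma$; the prefactor $N\ncht^{-1}$ equals $2$; and the sum $\sum_{i<j}$ has a single term. Substituting these into the formula \eqref{lndef2} for $L_{\gamma,N}$ immediately gives
\[
L_{\gamma,2} f(\bset) \;=\; -2^{1+\gamma}\bigl[f(\bset) - [f]^{(1,2)}(\bset)\bigr],
\]
which is the identity \eqref{lndef2B} already recorded in the excerpt.

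The next step is to recognize the operator $f\mapsto[f]^{(1,2)}$ as a rank-one projection. By definition $[f]^{(1,2)}=\mathbb{E}\{f\mid\mathcal{F}_{1,2}\}$ where $\mathcal{F}_{1,2}=\sigma\{\eta_k:k\neq 1,2\}$; for $N=2$ this $\sigma$-algebra is trivial, so $[f]^{(1,2)}=\mathbb{E}(f)=\langle f,1\rangle$. Hence $L_{\gamma,2}=-2^{1+\gamma}(I-P)$, where $P$ is the orthogonal projection of $\mathcal{H}_2$ onto the one-dimensional subspace of constants. Being (a multiple of) $I-P$, the operator $-L_{\gamma,2}$ has eigenvalue $0$ on the span of $1$ and eigenvalue $2^{1+\gamma}$ on the orthogonal complement $1^\perp$.

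Finally, I evaluate the Rayleigh quotient used in the definition \eqref{gapdef} of the spectral gap. For any $f\in\mathcal{H}_2$ with $\|f\|_2=1$ and $\langle f,1\rangle=0$ we have $[f]^{(1,2)}=0$, hence
\[
\mathcal{E}_{\gamma,2}(f,f) \;=\; -\langle f,L_{\gamma,2}f\rangle \;=\; 2^{1+\gamma}\langle f,f-[f]^{(1,2)}\rangle \;=\; 2^{1+\gamma}\|f\|_2^2 \;=\; 2^{1+\gamma},
\]
so the infimum in \eqref{gapdef} is attained with value $2^{1+\gamma}$, proving the claim. There is no real obstacle: the $N=2$ case is rigid because the only conditional expectation available is the unconditional mean and the single jump rate is constant on $\mathcal{S}_2$, reducing $L_{\gamma,2}$ to a scalar multiple of a projection.
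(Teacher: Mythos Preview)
Your proof is correct and follows essentially the same approach as the paper: compute $L_{\gamma,2}$ explicitly using that there is a single pair with constant rate $2^\gamma$ on $\mathcal{S}_2$, identify $[f]^{(1,2)}$ with the unconditional expectation, and read off the spectrum of $-2^{1+\gamma}(I-P)$. The paper's argument is given in the paragraph preceding the lemma and is the same in all essentials; you simply spell out the Rayleigh-quotient step in a bit more detail.
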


Our main result is:

\begin{thm}\label{CONJMAIN3} For all $\gamma\in [0,1]$ and all $N\geq 2$ there is a constant $C> 0$ depending only on $\gamma$ such that
\begin{equation}\label{SMNJK!2NM}
\Delta _{\gamma,N} \geq  C\ . 
\end{equation}
\end{thm}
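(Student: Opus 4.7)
The plan is to proceed by induction on $N$. The base case $N=2$ is Lemma~\ref{Delta2}. For the inductive step, assume $\Delta_{\gamma,N-1} \geq \kappa > 0$ and fix a unit, mean-zero $f \in \mathcal{H}_N$. By the $S_N$-symmetry of $\sigma_N$ and $\LN$, I may restrict attention to a fixed isotypic component of the $S_N$-representation on $\mathcal{H}_N$; for this sketch I focus on the symmetric case. The law of total variance gives
\begin{equation*}
\|f\|_2^2 = \mathbb{E}\!\left[\operatorname{Var}(f \mid \eta_1)\right] + \operatorname{Var}\!\left(\mathbb{E}[f \mid \eta_1]\right),
\end{equation*}
so the goal is to bound each piece by a constant multiple of $\mathcal{E}_{\gamma,N}(f,f)$, uniformly in $N$.

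For the first term I would condition on $\eta_1 = e$: the conditional law of $(\eta_2,\dots,\eta_N)$ is uniform on $\mathcal{S}_{N-1,(N-e)/(N-1)}$, and the part of $\mathcal{E}_{\gamma,N}(f,f)$ coming from pairs $(i,j) \subset \{2,\dots,N\}$ equals $\tfrac{N-2}{N-1}$ times $\mathcal{E}_{\gamma,N-1,(N-e)/(N-1)}$ applied to $f$ restricted to the slice. The induction hypothesis, together with the scaling of Lemma~\ref{scale}, then yields a pointwise bound
\begin{equation*}
\operatorname{Var}(f \mid \eta_1 = e) \le \frac{1}{\kappa} \left(\frac{N-1}{N-e}\right)^{\gamma} \mathcal{E}_{\gamma,N-1,(N-e)/(N-1)}\bigl(f\big|_{\eta_1=e}, f\big|_{\eta_1=e}\bigr).
\end{equation*}
Integrating over $\eta_1$ and using the $S_N$-symmetry of $f$ to rewrite the pair sums as multiples of the full Dirichlet form reduces matters to checking that
\begin{equation*}
A_\gamma(N) := \int_0^N \left(\tfrac{N-1}{N-e}\right)^{\gamma}\rho_N(e)\,\dd e
\end{equation*}
is bounded uniformly in $N$, where $\rho_N$ is the marginal density of $\eta_1$ under $\sigma_N$ (an explicit Beta-type density). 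The rapid decay of $\rho_N$ near $e=N$ must compensate for the singularity of the weight there.

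For the second term, I would argue non-inductively, using that $f \mapsto [f]^{(1,j)}$ is exactly the $N=2$ conditional expectation acting at total energy $\eta_1+\eta_j$ on the two-variable factor depending on $(\eta_1,\eta_j)$. Lemma~\ref{Delta2} thereby supplies a pointwise spectral gap $2^{1+\gamma}$ at each fixed value of $\eta_1+\eta_j$, which, after weighting by $(\eta_1+\eta_j)^\gamma$ and integrating against the two-body marginal of $\sigma_N$, yields a Poincar\'e-type inequality for $g(\eta_1) := \mathbb{E}[f\mid\eta_1]$ against the marginal $\rho_N$, of the desired form $\operatorname{Var}(g) \le C' \mathcal{E}_{\gamma,N}(f,f)$.

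The principal obstacle is the degenerate behavior of the rates $(\eta_i+\eta_j)^{\gamma}$ for $\gamma \in (0,1]$: in configurations where many pair energies are small, the Dirichlet form only weakly controls variance, and one must argue that the regions of bad control carry only small $\sigma_N$-mass. This is the same structural difficulty as in the Kac model with hard-sphere collisions, and the technical work --- establishing the uniform bound on $A_\gamma(N)$, closing the one-body Poincar\'e inequality through the small-pair-energy region, and tracking the prefactors so that no $N$-dependent blow-up appears --- adapts the weighted Poincar\'e and conditioning arguments developed for that setting, combined with explicit computations using the Beta-type marginals of $\sigma_N$.
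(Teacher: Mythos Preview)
Your inductive strategy is in the same family as the paper's, but the proposal has a quantitative gap that prevents the induction from closing. If you bound $\mathbb{E}[\operatorname{Var}(f\mid\eta_1)] \le \tfrac{C_1(N)}{\Delta_{\gamma,N-1}}\,\mathcal{E}_{\gamma,N}(f,f)$ and $\operatorname{Var}(\mathbb{E}[f\mid\eta_1]) \le C_2\,\mathcal{E}_{\gamma,N}(f,f)$, the recursion is $\Delta_{\gamma,N} \ge \Delta_{\gamma,N-1}/(C_1(N) + C_2\Delta_{\gamma,N-1})$. Even in the easiest case $\gamma=0$ one has $C_1(N)=(N-1)/N$, and iterating with any fixed $C_2>0$ gives only $\Delta_{\gamma,N}\gtrsim 1/N$, not a uniform bound. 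The phrase ``bound each piece by a constant multiple \dots\ uniformly in $N$'' is exactly the wrong target: as the paper explains around \eqref{NEED}--\eqref{ANCOND}, the per-step multiplicative factor must equal $1 - A_N$ with $\sum_N A_N<\infty$, which forces you to compute the leading $1-\tfrac1N$ \emph{exactly} and show the remainder is summable. Two specific steps in your sketch fail at this level: (i) the claim that the first term ``reduces to checking that $A_\gamma(N)$ is bounded'' is not correct---the singular weight $((N-1)/(N-e))^{\gamma}$ multiplies the conditional Dirichlet form itself, which may concentrate near $e=N$, so it cannot be decoupled from the integrand; (ii) your $N=2$ Poincar\'e argument for $\operatorname{Var}(P_1 f)$ does not extract the $O(1/N)$ smallness of $\|P_1 f\|_2^2$ that is actually needed (the two-particle gap controls $f-[f]^{(1,j)}$, not $P_1f$, and bridging that gap is precisely a correlation estimate).

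The paper resolves these issues by averaging the slice bound over \emph{all} $k$, which puts the scaling factor $w_N^{(\gamma)}(\eta_k)=((N-\eta_k)/(N-1))^\gamma$ in the numerator and produces the auxiliary Dirichlet form $\mathcal{G}_{\gamma,N}$ of Definition~\ref{fformdef}. All of the real work is then Theorem~\ref{CONJMAIN2}, the precise estimate $\Gamma_{\gamma,N}\ge 1-\tfrac1N-CN^{-3/2}$; this is obtained via a trial-function decomposition $f=s+g+h$ (Definition~\ref{decompdef}), a quadratic lower bound $m_N^{(\gamma)}\le w_N^{(\gamma)}$ (Lemma~\ref{comp1}), and the chaoticity bounds of Section~\ref{CHABNDS}, which together show that trial functions cannot put much mass where $W^{(\gamma)}$ is small. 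Your sketch contains none of this machinery, and without something equivalent the induction yields at best the non-uniform bound of Lemma~\ref{CONJMAIN1}.
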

The simplest  case is $\gamma=0$, which was already treated in \cite{C08,GF08}. The next simplest case is $\gamma =1$, and this was treated in \cite{S15}, which also deals with $\gamma>1$. However,  dealing with $0 < \gamma < 1$ is more involved and was open until now. As discussed in the next subsection, the case of greatest physical interest is $\gamma = 1/2$. 

In view of Lemma~\ref{Delta2},  even where it is  not explicitly stated, we tacitly assume $N\geq3$. 
Furthermore in the sequel ``a constant'' will be a number which may depend on $\gamma$ but not on $N$, and the value of the constant $C$ may change from line to line. 

\subsection{Physical context and previous results}

We close the introduction with a brief discussion of the physical context of this problem and the history of the progress made on it. Its origins lie  in  ground-breaking work of Gaspard and Gilbert \cite{GG08,GG09} on the long-standing problem of deriving Fourier's law of heat transfer in insulating  solid materials. In this work, as further developed by Grigo, Khanin, and Sz\'asz \cite{GKS12}, the stochastic exchange process described here arises for the specific value $\gamma = \tfrac12$. The physically relevant problem was to show that $\Delta_{\frac12,N}$ is bounded below by a strictly positive constant $C$, independent of $N$. 

The spectral gap problem for the stochastic exchange
model is closely related to the spectral gap problem for the
Kac model \cite{K56}. In the simplest version of the latter, the simplex is replaced by the 
sphere representing the velocities of $N$ particles with given total kinetic energy, and the dynamics consists of binary ``collisions'' that conserve the kinetic energy, or for velocities in $\R^d$, $d>2$, the kinetic energy and momentum.
In 1956, Kac conjectured a uniform lower bound on the gap for the simplest version of the model with one-dimensional velocities and
and uniform ``collision rates'', so that all kinematically possible collisions are equally likely. This conjecture was first proved by Janvresse \cite{J01}, and then the exact value of the gap was determined in \cite{CCL00,CCL03}, using an induction on the number of particles with controlled bounds on correlations. For further background on the Kac model, see \cite{CCL11}. 

Another method for bounding the spectral gap was developed by Caputo \cite{C08} that does not use induction on the number of particles. He showed that when the spectral gap for 3 particles is sufficiently large, then this directly implies a lower bound on the gap for all $N$; see
\cite[Theorem 2.1]{C08}. He applied this not only to the original Kac model, but what he calls the ``flat Kac model'', which is the stochastic exchange model for $\gamma =0$.  He notes that while his method is quite simple, it is somewhat less general. The Kac model spectral gap in $3$ dimensions with  energy and momentum  conserving collisions, but still with uniform collision rates, was computed in \cite{CGL}, and as Caputo notes, the gap for $N=3$ is too small for his method to apply.  About the same time, Geroux and Ferland treated a class of models including the stochastic exchange model for $\gamma =0$, using the methods of \cite{CCL00,CCL03}, and they too obtained the exact value of the spectral gap in this case. 

In the more physically meaningful versions of the Kac model and the stochastic exchange model, not all collisions occur with the same rates. Instead, the rates depend on the energy of the pair of particles, or other conserved quantities depending on the model, and the rates are generally degenerate. That is, the rate can be zero for some pairs. This leads to considerable mathematical difficulty, but for the Kac model with physically realistic ``hard-sphere collisions'' the problem of bounding the spectral gap from below uniformly in $N$ was solved in \cite{CCL14} in one dimension, and in \cite{CCL20} in three dimensions, using induction with controlled correlation bounds. 

Around the same time, Sasada was working on a class of stochastic exchange models, including the one discussed here. She adapted Caputo's method from \cite{C08} to non-uniform rates \cite{S15}; see also \cite{S13}.  She proved a uniform lower bound on the spectral gap for $\gamma \geq 1$, but her work for $\gamma<1$ depended on the calculation in
an appendix that was intended to show that also for $\gamma < 1$,  the gap for $N=3$ is sufficiently large to imply a uniform lower bound for all $N$. 
Unfortunately, it was discovered that these computations were not correct.

Here we apply the methods developed for the Kac model to the stochastic exchange model, and prove in Theorem~\ref{CONJMAIN3}  a uniform lower bound on the spectral gap in the specific case that is relevant to the physics discussed in \cite{GG08,GG09} and \cite{GKS12}. A broader class of models can be handled by the same method, and we briefly discuss this in the final section. 

An advantage of the inductive method that we develop here is that it allows one to make use of simplifications that emerge for large $N$. In particular, for large $N$, any pair of coordinate functions $(\eta_j,\eta_k)$ are approximately independent with respect to the uniform probability measure on $\STE$. If the correlations are already sufficiently small at $N=3$, then Caputo's method is very efficient. But verifying this, even for $N=3$ can be quite difficult. The method used here avoids such difficult calculations; one need only do careful calculations for large $N$, and then take advantage of simplifications due the the approximate independence. 
 In fact, in other work, Caputo \cite{Cap03} has made use of the inductive approach to treat uniform spectral gaps for diffusive Ginzburg-Landau model on the simplex, and he used Local Central Limit Theorem estimates, among other tools, to take advantage of the simplicity that emerges for large $N$.  

We now explain the induction with controlled correlations.

\section{The induction}

The inductive step consists of writing the Dirichlet form $\mathcal{E}_{\gamma,N}(f,f)$ in terms of an average over Dirichlet forms in which the $k^{{\rm th}}$ particle is ``frozen'' and not allowed to exchange energy with the others. What remains is the interaction between $N-1$ particles. Then the total energy possessed by these $N-1$ particles is $N -\eta_k$, and hence the energy per particle is ${\displaystyle \frac{N- \eta_k}{N-1}}$. Thus, the remaining $N-1$ particles carry out the same sort of walk on
$\mathcal{S}_{N-1,\frac{N-\eta_k}{N-1}}$.
These considerations lead to the definition of the 
 {\em conditional Dirichlet form} $\mathcal{E}_{\gamma,N}(f,f|\eta_k)$:
\begin{equation}\label{cdir}
{\mathcal E}_{\gamma,N} (f,f|\eta_k) := 
{(N-1)}{\nmcht}^{-1}
\sum_{i<j; i,j\neq k}\int_{\mathcal{S}_{N-1,\frac{N-\eta_k}{N-1}}}(\eta_i+\eta_j)^\gamma (f - [f]^{(i,j)})^2\dd \sigma_{N-1,\frac{N-\eta_k}{N-1}}\ ,
\end{equation}
noting that with the value of $\eta_k$ fixed, the remaining $N-1$ variables range over  a copy of $\mathcal{S}_{N-1,\frac{N-\eta_k}{N-1}}$ which is the sense in which the integral is to be taken.

Let $\dd \nu_N$ be the marginal distribution induced on $[0,N]$ by the map
$ \bset \mapsto \eta_k$ and the uniform probability measure $\dd\sigma_{N}$ on 
$\ST$, which is independent of $k$ by symmetry. Since $\sigma_N$ is a flat Dirichlet measure, it is well-known (and easy to see) that 
\begin{equation}\label{BETAMARG}
\dd \nu_N(\eta) = \frac{N-1}{N^{N-1}}(N- \eta)^{N-2}\dd \eta = 
\frac{N-1}{N}\left(1 - \frac{\eta}{N}\right)^{N-2}{\rm d}\eta\ .
\end{equation}
which is a rescaled $\beta$ distribution. Therefore, in the limit $N\to \infty$, ${\rm d}\nu_N$ converges to the exponential distribution 
$e^{-\eta}{\rm d}\eta$ on $[0,\infty)$. A simple computation shows that all moments of ${\rm d}\nu_N$ are bounded by those of the exponential distribution.

One easily checks that
\begin{equation}\label{rec}
{\mathcal E}_{\gamma,N}(f,f) = \frac{N}{N-1}
\left(\frac{1}{N}\sum_{k=1}^N  \int_0^N {\mathcal E}_{\gamma,N}(f,f|\eta_k) \dd \nu_N(\eta_k)\right)\ .
\end{equation}

Furthermore, 
\begin{equation}\label{rec2}
{\mathcal E}_{\gamma,N}(f,f|\eta_k)= {\mathcal E}_{\gamma,N-1,\frac{N-\eta_k}{N-1}}(g_{\eta_k},g_{\eta_k})
\end{equation}
where $g_{\eta_k}$ is the restriction of  $f$ to the 
slice of
$\mathcal{S}_{N-1,\frac{N-\eta_k}{N-1}}$ at constant  $\eta_k$. 
If $g_{\eta_k}$ were orthogonal to the constants in ${\cH_{N-1,\frac{N-\eta_k}{N-1}} }$, we could estimate the right hand side of \eqref{rec2}
in terms of $\Delta_{\gamma,N-1,\frac{N-\eta_k}{N-1}}$. By Lemma~\ref{scale}, 
${\Delta_{\gamma,N-1,\frac{N-\eta_k}{N-1}} = \left(\frac{N - \eta_k}{N-1}\right)^\gamma \Delta_{\gamma,N-1}}$.
Combining this with (\ref{rec2}) would yield (assuming $g_{\eta_k}$ is orthogonal to the constants)
\begin{equation}\label{rec3}
{\mathcal E}_{\gamma,N}(f,f|\eta_k)\geq   \left(\frac{N - \eta_k}{N-1}\right)^\gamma \Delta_{\gamma,N-1} \|g_{\eta_k}\|_2^2\ .
\end{equation}
By the Fubini-Tonelli theorem, $\|f\|_2^2 = \int_{[0,N]} \|g_{\eta_k}\|_2^2{\rm d}\nu_N(\eta_k)$ for any $k$. 
For $\gamma =0$, this together with \eqref{rec} would give us 
${\displaystyle {\mathcal E}_{0,N}(f,f) = \frac{N}{N-1}\Delta_{0, N-1}\|f\|_2^2}$,
which is too good to be true for all $f$, since in that case is would yield, by a simple induction starting from Lemma~\ref{Delta2}, that $\Delta_{0,N}\geq N$. This is demonstrably false. 

We must take into account that even if 
$f$ is orthogonal to the constants in 
$\mathcal{H}_N$, it need not be the case that $g_{\eta_k}$ 
is orthogonal to the  constants  on its slice.

\begin{defi}\label{PKOPDEF}
Define  the operator $P_k$ on $\cH_N$
to be the orthogonal projection onto the subspace of functions depending only on $\eta_k$. In probabilistic terms, $P_k$ is the operator of conditional expectation given $\eta_k$. 
\end{defi}

Then  the restriction of $f - P_kf$ to any slice determined by fixing a value for $\eta_k$ is orthogonal to the constants on the slice. Therefore,  
$${\mathcal E}_{\gamma,N}(f,f|\eta_k)  = {\mathcal E}_{\gamma,N}(f- P_kf,f-P_kf |\eta_k) \ .$$
Denoting  with $\|\cdot\|_{2,N-1,\frac{N-\eta_k}{N-1}}$ the norm on $\mathcal{H}_{N-1,\frac{N-\eta_k}{N-1}}$, by the definition of the spectral gap and scaling relation (\ref{scaleB}), 
\begin{equation*}
 \mathcal{E}_{\gamma,N}(f- P_kf,f-P_kf |\eta_k) \geq \left(\frac{N - \eta_k}{N-1}\right)^\gamma\Delta_{\gamma,N-1} \Vert f - P_k f\Vert^2_{2,N-1,\frac{N-\eta_k}{N-1}}\ .
\end{equation*}
 
Finally, we have the obvious identity
\begin{align*}
\Vert f - P_k f\Vert^2_{2,N-1,\frac{N-\eta_k}{N-1}} =
 \Vert f \Vert^2_{2,N-1,\frac{N-\eta_k}{N-1}} -
  \Vert  P_k f\Vert^2_{2,N-1,\frac{N-\eta_k}{N-1}}\ .
\end{align*}

Altogether, going back to (\ref{rec}), one has
\begin{equation}\label{rec2B}
 \mathcal{E}_{\gamma,N} (f,f) \geq \frac{N}{N-1}\Delta_{\gamma,N-1}
\left(\frac{1}{N}\sum_{k=1}^N  \int_{\mathcal{S}_N}\left(\frac{N - \eta_k}{N-1}\right)^\gamma
\left(f^2 - |P_kf|^2\right)  \dd \sigma_N\right)\ .
\end{equation}
To summarize  our conclusions, define the quadratic form $\mathcal{G}_{\gamma,N}(f,f) $ on $\cH_N$
as follows:
\begin{defi}\label{fformdef}
 \begin{equation}\label{find2}
\mathcal{G}_{\gamma,N}(f,f) := 
  \frac{1}{N} \sum_{k=1}^N\left[   \int_{\mathcal{S}_N} w^{(\gamma)}_N(\eta_k)(f - P_kf)^2\dd \sigma_N
 \right]\ 
\end{equation}
where
 \begin{equation}\label{find3}
 w^{(\gamma)}_N(\eta) := \left(\frac{N - \eta}{N-1}\right)^\gamma\ .
 \end{equation}
\end{defi}

We have proved:
 \begin{thm}\label{thm1} For all $\gamma\in [0,1]$ and  $f\in \cH_N$ with $\|f\|^2_2 =1$ and $\langle1,f\rangle = 0$,
 \begin{equation}\label{ind1}
 \mathcal{E}_{\gamma,N} (f,f) \geq \left[\frac{N}{N-1}\Delta_{\gamma,N-1}\right] \mathcal{G}_{\gamma,N}(f,f) \ .
\end{equation}
\end{thm}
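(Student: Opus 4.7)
The plan is to assemble the identities and inequalities already derived in this section into a single bound, in three steps.

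First, I would start from the decomposition (\ref{rec}), which expresses $\mathcal{E}_{\gamma,N}(f,f)$ as $\frac{N}{N-1}$ times the average of the conditional Dirichlet forms $\mathcal{E}_{\gamma,N}(f,f|\eta_k)$ integrated against the marginal $\nu_N$. This reduces the task to a pointwise (in $\eta_k$) lower bound on each conditional Dirichlet form, independently for each $k$.

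Second, I would invoke (\ref{rec2}), which identifies $\mathcal{E}_{\gamma,N}(f,f|\eta_k)$ with the $(N-1)$-particle Dirichlet form on the slice $\mathcal{S}_{N-1,(N-\eta_k)/(N-1)}$ applied to the restriction $g_{\eta_k}$ of $f$. Since $g_{\eta_k}$ need not have slice-average zero, I would first observe that the conditional Dirichlet form is invariant under the replacement $f \mapsto f - P_k f$, because $P_k f$ is constant on each slice and every exchange $(i,j)$ with $i,j \neq k$ leaves $\eta_k$ untouched, so the increments $f - [f]^{(i,j)}$ annihilate $P_k f$. The restriction of $f - P_k f$ to the slice is orthogonal to the constants there, so the definition of the spectral gap combined with the scaling Lemma~\ref{scale} yields the pointwise estimate
\begin{equation*}
\mathcal{E}_{\gamma,N}(f,f\,|\,\eta_k) \;\geq\; \left(\frac{N-\eta_k}{N-1}\right)^{\gamma} \Delta_{\gamma,N-1}\,\|f - P_k f\|^2_{2,N-1,(N-\eta_k)/(N-1)}.
\end{equation*}

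Third, I would integrate against $\dd \nu_N(\eta_k)$ and then average over $k$. By Fubini-Tonelli, integrating the weighted slice norm against $\dd \nu_N(\eta_k)$ reproduces the ambient integral $\int_{\mathcal{S}_N} w^{(\gamma)}_N(\eta_k)[f - P_k f]^2 \, \dd \sigma_N$, since $w^{(\gamma)}_N$ is a function of $\eta_k$ alone and may be pulled inside the slice integral. This integral is precisely the $k$-th summand in the definition of $\mathcal{G}_{\gamma,N}(f,f)$ in Definition~\ref{fformdef}. Factoring out the $\eta_k$-independent constant $\Delta_{\gamma,N-1}$ along with the prefactor $\frac{N}{N-1}$ from (\ref{rec}) delivers (\ref{ind1}).

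There is no genuine obstacle in this argument: every step is either a definition or a direct invocation of a previously recorded identity. The one substantive move, and the one to highlight in the writeup, is the replacement $f \mapsto f - P_k f$ before applying the slice spectral gap; its necessity is already flagged by the $\gamma = 0$ remark in the excerpt, where skipping the projection would yield the demonstrably false bound $\Delta_{0,N} \geq N$.
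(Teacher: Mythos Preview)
Your proposal is correct and follows essentially the same path as the paper: decompose via \eqref{rec}, pass to the $(N-1)$-particle slice via \eqref{rec2}, subtract $P_kf$ so the slice restriction is orthogonal to constants, apply the spectral gap together with the scaling of Lemma~\ref{scale}, and then integrate and average to recover $\mathcal{G}_{\gamma,N}$. The only cosmetic difference is that the paper writes the slice norm as $\|f\|^2 - \|P_kf\|^2$ in \eqref{rec2B} before repackaging it as $\mathcal{G}_{\gamma,N}$, whereas you keep $[f-P_kf]^2$ throughout; the content is identical.
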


We therefore define
\begin{defi}
\begin{equation}\label{gapdefinition}
\Gamma_{\gamma,N} := \inf\{ \mathcal{G}_{\gamma,N}(f,f) \ :\ f\in\mathcal{H}_N\ ,\ \|f\|_2 = 1\ ,\  \langle f,1\rangle =0\}\ .
\end{equation}
\end{defi}

Combining this definition with (\ref{gapdef}), Theorem~\ref{thm1} yields
\begin{equation}\label{red1}
\Delta_{\gamma,N} \geq   \Delta_{\gamma,N-1} \frac{N}{N-1} \Gamma_{\gamma,N}\ .
\end{equation}

For \eqref{red1} to provide a lower bound on $\Delta_{\gamma,N}$ that is uniform in $N$, we need a lower bound on $\Gamma_{\gamma, N}$ at least as strong as
\begin{equation}\label{NEED}
\Gamma_{\gamma, N} \geq 1 - \frac{1}{N} - A_N     
\end{equation}
where 
\begin{equation}\label{ANCOND}
0 \leq A_N < \frac{N-1}{N}\quad{\rm and}\quad \sum_{N=3}^\infty A_N < \infty\ .
\end{equation}
 This is because
${\displaystyle
\frac{N}{N-1}\left(1 - \frac{1}{N} - A_N\right)
=1 -  \frac{N}{N-1}A_N\ .
}$
Therefore,  with  $0 \leq A_N < \frac{N-1}{N}$ for all $N\geq 3$, and for any $N_0\geq 3$,
$$
\prod_{N=N_0}^\infty \frac{N}{N-1}\left(1 - \frac{1}{N} - A_N\right) >0
$$
if and only if the summability condition in \eqref{ANCOND} is satisfied. In particular, if one replaces
the second term on the right in \eqref{NEED} by $-\frac{C}{N}$ for any $C>1$, the infinite product would converge to zero, yielding a trivial bound.  Likewise, were it the case that $C<1$, one would obtain a spectral gap that diverges to infinity with $N$, and simple trial functional calculations show that this is impossible. Therefore, if the strategy is to work,  $\Gamma_{\gamma,N}$ must satisfy  $\Gamma_{\gamma,N} = 1 - \frac{1}{N} + o\left(\frac{1}{N}\right)$, and one must calculate the first two terms {\em exactly}
and then show that the remainder is summable. 

As we shall see, this is easily done for $\gamma=0$ and $\gamma=1$. However, the Dirichlet forms $\mathcal{G}_{\gamma,N}$ are not monotone in $\gamma$. Indeed, for all $\bset$ except $(1,\dots,1)$, there exists at least one index $k$ such that $\frac{N-\eta_k}{N-1} < 1$, 
and at least one index $k$ such that $\frac{N-\eta_k}{N-1}>1$. Consequently, the rates in the  Dirichlet forms $\mathcal{G}_{\gamma,N}(f,f)$ are not monotone in $\gamma$. In more probabilistic terms, it is not the case that any one of the corresponding processes simply runs faster than another. Therefore, there is no trivial monotonicity argument for passing estimates for $\gamma\in \{0,1\}$ to $\gamma\in (0,1)$. 

Most of our work goes into proving, in 
Section~\ref{LARGEN}, the following theorem:

\begin{thm}\label{CONJMAIN2} For all $\gamma\in [0,1]$ and all $N\geq 3$ there is a constant $C> 0$ depending only on $\gamma$ such that
\begin{equation}\label{SMNJK!2N}
\Gamma_{\gamma,N} \geq  \left(1 - \frac1N - \frac{C}{N^{3/2}}\right)\ . 
\end{equation}
\end{thm}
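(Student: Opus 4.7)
The plan is to view $\mathcal{G}_{\gamma,N}$ as the quadratic form of a self-adjoint operator and then bound that operator from below spectrally. Writing $W_k$ for multiplication by $w^{(\gamma)}_N(\eta_k)$, the fact that $W_k$ and $P_k$ commute (each being determined by $\eta_k$) gives
\[
\mathcal{G}_{\gamma,N}(f,f) \;=\; \langle f, T_N f\rangle, \qquad T_N \;=\; \frac{1}{N}\sum_{k=1}^N W_k (I - P_k) \;=\; A_N - B_N,
\]
with $A_N = \frac{1}{N}\sum_k W_k$ and $B_N = \frac{1}{N}\sum_k W_k P_k$. The target inequality is $T_N \geq (1 - 1/N - C/N^{3/2})\, I$ on the orthogonal complement of the constants, which I would obtain from separate estimates on $A_N$ and $B_N$.

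For the diagonal operator $A_N$, I would Taylor expand $w^{(\gamma)}_N(\eta) = 1 - \gamma(\eta-1)/(N-1) + r(\eta)$ and observe that the linear-in-$(\eta_k - 1)$ term cancels pathwise on $\mathcal{S}_N$ once averaged over $k$, thanks to the constraint $\sum_k \eta_k = N$. A direct calculation with the explicit Beta density $\dd\nu_N$ gives $\mathbb{E}_{\nu_N}[w^{(\gamma)}_N] = N^{\gamma}(N-1)^{1-\gamma}/(N+\gamma-1) = 1 + O(1/N^2)$ uniformly in $\gamma \in [0,1]$, and standard Beta tail bounds control the pointwise deviation of $A_N$ off the bulk set $\{\eta_k \approx 1\}$. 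Together these give $A_N \geq (1 - O(1/N^2))\, I$ as a quadratic form on $\mathcal{H}_N$, which fits comfortably within the allowed error budget, reducing the problem to showing that the operator norm of $B_N$ on $\{1\}^\perp$ is at most $1/N + C/N^{3/2}$.

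For $B_N$, I would use a Hoeffding-type orthogonal decomposition $\mathcal{H}_N \ominus \R = \bigoplus_{d=1}^{N-1} V_d$, where $V_d$ is spanned by symmetric functions of exactly $d$ of the $\eta_i$'s whose $\sigma_N$-conditional expectation given any fewer than $d$ coordinates vanishes. In the i.i.d.\ exponential limit, $P_k$ annihilates $V_d$ for $d \geq 2$ and $\frac{1}{N}\sum_k P_k$ acts as $(1/N)\, I$ on $V_1$. On the actual simplex, the one- and two-variable marginals of $\sigma_N$ differ from a product of two exponentials by terms of size $O(1/N)$ that can be quantified by an asymptotic expansion of the underlying Dirichlet density, or equivalently by a local central limit theorem. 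Pushing this through should give the sharp bound $(1/N + C/N^{3/2})\, I$ on $V_1$ and a strictly smaller bound on $V_d$ for $d \geq 2$, since there the only contribution is the simplex correction.

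The main obstacle, I expect, is the sharp calculation on $V_1$. For a pure element $f = \sum_k g(\eta_k)$ with $\int g\, \dd\nu_N = 0$, one must expand $\langle f, B_N f\rangle$ and $\|f\|_2^2$ in terms of the one- and two-variable marginals of $\sigma_N$ to the necessary order and verify that the principal $O(1/N)$ pieces arising from the $\sigma_N$-covariance of $g(\eta_1)$ and $g(\eta_2)$ and from $\mathbb{E}_{\sigma_N}[(w^{(\gamma)}_N(\eta_1)-1)\, g(\eta_1)\, g(\eta_2)]$ combine to leave only a uniform $O(1/N^{3/2})$ residual. This is precisely the fine cancellation whose absence at $N=3$ is responsible for the failure of Caputo's three-particle method for $\gamma < 1$, and it is what makes the large-$N$ asymptotics indispensable here.
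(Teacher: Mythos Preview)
Your separation $T_N = A_N - B_N$ together with the claim $A_N \geq (1 - O(1/N^{2}))\,I$ is where the argument breaks. Since $A_N$ is multiplication by the scalar function $W^{(\gamma)}(\bset) = \frac{1}{N}\sum_k w_N^{(\gamma)}(\eta_k)$, a quadratic-form lower bound $A_N \geq c\,I$ on $\mathcal{H}_N$ is equivalent to the \emph{pointwise} bound $W^{(\gamma)} \geq c$ on $\mathcal{S}_N$. But $W^{(\gamma)}$ is concave on the simplex, so its minimum is attained at an extreme point, say $\bset = (N,0,\dots,0)$, where $W^{(\gamma)} = \left(\tfrac{N-1}{N}\right)^{1-\gamma} = 1 - \tfrac{1-\gamma}{N} + O(1/N^{2})$. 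No tail or concentration argument can help: a trial function $f$ may put all of its $L^2$ mass near such a vertex. With only $A_N \geq 1 - (1-\gamma)/N$ and your (correct) bound $B_N \leq 1/N + O(1/N^{2})$ on $\{1\}^\perp$, you get $\Gamma_{\gamma,N} \geq 1 - (2-\gamma)/N + O(1/N^{2})$, and then the induction $\Delta_{\gamma,N} \geq \Delta_{\gamma,N-1}\cdot \tfrac{N}{N-1}\Gamma_{\gamma,N}$ produces factors $\approx 1 - (1-\gamma)/(N-1)$ whose product tends to zero. This is exactly the route that yields the non-uniform bound $\Delta_{\gamma,N} \geq C N^{\gamma-1}$ and nothing better; the paper identifies the pointwise lower bound on $W^{(\gamma)}$ as precisely the weak point.

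The paper does \emph{not} estimate $A_N$ and $B_N$ separately. It first replaces $w_N^{(\gamma)}$ by a quadratic lower bound $m_N^{(\gamma)}(\eta) = 1 + \gamma\tfrac{1-\eta}{N-1} - (1-\gamma)\big(\tfrac{1-\eta}{N-1}\big)^{2}$, then decomposes the trial function as $f = s + g + h$ with $h \in \mathcal{A}_N^\perp$ and $p = s+g \in \mathcal{A}_N$, the split $s/g$ being according to the $\phi_1$-component of the correlation operator $K$. The essential cancellation appears in the diagonal term for $g = \sum_k \varphi_k(\eta_k)$: both $\int \widetilde W^{(\gamma)} g^{2}$ and $\langle g,\widetilde P^{(\gamma)} g\rangle$ reduce, up to $O(1/N^{2})\|g\|_2^{2}$ errors coming from the spectral bound $|\kappa_2| = O(1/N^{2})$, to sums of the form $\tfrac{1}{N}\sum_k \int [\cdot]\,\varphi_k^{2}$, and the two brackets combine into $\tfrac{1-\gamma}{(N-1)^{2}}\eta_k^{2} + m_N^{(\gamma)}(\eta_k)$. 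Because the quadratic piece of $m_N^{(\gamma)}$ has coefficient $-(1-\gamma)/(N-1)^{2}$, the dangerous $\eta_k^{2}$ terms cancel and the combination is an affine function of $\eta_k$ uniformly bounded by $1 + O(1/N)$. This cancellation between the ``bad'' part of $W^{(\gamma)}$ and the weight in $P^{(\gamma)}$ is invisible if you bound the two pieces independently, and it is the mechanism that makes the $O(1/N^{3/2})$ error achievable.
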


For small values of $N$, the lower bound provided by Theorem~\ref{CONJMAIN2} may be trivial; the right side of \eqref{SMNJK!2N} may be negative. Define $N_0$ to be the least value of $N$ such that the right side of \eqref{SMNJK!2N} is strictly positive. Then, provided we know  that $\Delta_{\gamma,N}>0$ for all $N \leq N_0$, we may begin the induction at $N_0$ to obtain the lower bound on $\Delta_{\gamma,N}$ claimed in our main result, Theorem~\ref{CONJMAIN3}.

It is relatively simple to prove that $\Delta_{\gamma,N}>0$ for all $N \leq N_0$. A uniform lower bound on $\Delta_{1,N}$ has already been proved by Sasada \cite[Theorem 4]{S15} using her refinement of Caputo's method \cite{C08}. As we show in 
Section~\ref{SMALLN}, a simple comparison argument leads from this to
a lower bound on $\Delta_{\gamma,N}$ that is not uniform, but does give us what we need to begin the induction at $N = N_0$, and to then apply Theorem~\ref{CONJMAIN2}. The next lemma is proved in
Section~\ref{SMALLN}.
\begin{lm}\label{CONJMAIN1} For all $\gamma\in [0,1]$ and all $N\geq 3$ there is a constant $C> 0$ depending only on $\gamma$ such that
\begin{equation}\label{SMNJK!2}
\Delta_{\gamma,N} \geq  C N^{\gamma -1}\ . 
\end{equation}
\end{lm}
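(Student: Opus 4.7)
The plan is to prove the lemma by a direct pointwise comparison of the Dirichlet forms $\mathcal{E}_{\gamma,N}$ and $\mathcal{E}_{1,N}$, combined with Sasada's uniform lower bound on $\Delta_{1,N}$ that is cited in the paragraph preceding the statement (Sasada \cite{S15}, Theorem 4): there is a constant $c_0>0$, independent of $N$, with $\Delta_{1,N}\geq c_0$ for all $N\geq 2$.

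The key observation is that on the state space $\mathcal{S}_N=\mathcal{S}_{N,1}$, any two energies satisfy $\eta_i+\eta_j\leq N$, since $\sum_k \eta_k = N$ and all $\eta_k\geq 0$. Since $\gamma-1\leq 0$, this gives the pointwise bound
\begin{equation*}
(\eta_i+\eta_j)^{\gamma} \;=\; (\eta_i+\eta_j)\,(\eta_i+\eta_j)^{\gamma-1} \;\geq\; N^{\gamma-1}\,(\eta_i+\eta_j)
\end{equation*}
for every $\bset\in\mathcal{S}_N$ and every pair $i<j$. Plugging this into the expression for the Dirichlet form
\begin{equation*}
\mathcal{E}_{\gamma,N}(f,f) \;=\; N\,\ncht^{-1}\sum_{i<j}\int_{\mathcal{S}_N}(\eta_i+\eta_j)^{\gamma}\,(f-[f]^{(i,j)})^2\,\dd\sigma_N,
\end{equation*}
which follows from \eqref{lndef2} and \eqref{dir3}, yields immediately the comparison
\begin{equation*}
\mathcal{E}_{\gamma,N}(f,f) \;\geq\; N^{\gamma-1}\,\mathcal{E}_{1,N}(f,f)
\end{equation*}
valid for every $f\in\mathcal{H}_N$.

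Taking the infimum over $f\in\mathcal{H}_N$ with $\|f\|_2=1$ and $\langle f,1\rangle =0$, and using the definition \eqref{gapdef} of the spectral gap together with Sasada's bound, we conclude
\begin{equation*}
\Delta_{\gamma,N} \;\geq\; N^{\gamma-1}\,\Delta_{1,N} \;\geq\; c_0\,N^{\gamma-1},
\end{equation*}
which is exactly \eqref{SMNJK!2} with $C=c_0$. There is no serious obstacle here: the only nontrivial ingredient is the uniform bound on $\Delta_{1,N}$, which is quoted from \cite{S15}; the comparison of rates is elementary and does not involve any induction or correlation analysis. The bound is of course not uniform in $N$ when $\gamma<1$, but it suffices to start the inductive scheme of Section~2 at the finite value $N_0$ determined by Theorem~\ref{CONJMAIN2}, as explained in the excerpt.
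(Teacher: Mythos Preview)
Your argument is correct: the pointwise bound $(\eta_i+\eta_j)^\gamma \geq N^{\gamma-1}(\eta_i+\eta_j)$ on $\mathcal{S}_N$ gives $\mathcal{E}_{\gamma,N}\geq N^{\gamma-1}\mathcal{E}_{1,N}$, and then Sasada's uniform bound on $\Delta_{1,N}$ finishes the job.

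The paper takes a different route. Rather than comparing Dirichlet forms directly, it runs the paper's own inductive machinery: it bounds $\Gamma_{\gamma,N}$ from below via the decomposition \eqref{difform}, using the pointwise lower bound on $W^{(\gamma)}$ (Lemma~\ref{weight}), the upper bound $P^{(\gamma)}\leq (N/(N-1))^\gamma P^{(0)}$ (Lemma~\ref{projL}), and the spectrum of $P^{(0)}$ (Lemma~\ref{MUNLEM}). This yields $\Gamma_{\gamma,N}\geq \left(\frac{N}{N-1}\right)^{\gamma-2}\left[1-\frac{3}{(N-1)^2}\right]$, and then the recursion \eqref{red1} is telescoped from $N=2$ to obtain $\Delta_{\gamma,N}\geq C N^{\gamma-1}$. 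Along the way, the paper re-proves the $\gamma=1$ uniform bound (Theorem~\ref{GAMONEGAP}) by the same inductive method, precisely to avoid relying on \cite{S15} as a black box.

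Your approach is shorter and more transparent, but it imports Sasada's result wholesale. The paper's approach is self-contained and serves a secondary purpose: it illustrates the inductive scheme in a simple setting before the harder work of Theorem~\ref{CONJMAIN2}. If you are content to cite \cite{S15}, your proof is the more economical one; if self-containment matters, the paper's argument shows that the entire lemma follows from the correlation estimates of Section~\ref{CHABNDS} alone.
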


Assuming for now the validity of Theorem~\ref{CONJMAIN2} and Lemma~\ref{CONJMAIN1}, we now prove our main result.

\begin{proof}[Proof of Theorem~\ref{CONJMAIN3}] Choose $N_0$ sufficiently large that $1 - N_0^{-1} - CN_0^{-3/2}> 0$ where $C$ is the constant in \eqref{SMNJK!2N}.
By Lemma~\ref{CONJMAIN1}, $\Delta_{\gamma,N_0} > 0$.  By \eqref{red1} and Theorem~\ref{CONJMAIN2}, for all $N> N_0$,
\begin{eqnarray*}
\Delta_{\gamma,N} &\geq& \Delta_{\gamma,N_0} \prod_{j=N_0+1}^N \frac{j}{j-1}\left(1 - \frac{1}{j}  - \frac{C}{j^{3/2}}\right)\\
&\geq& \Delta_{\gamma,N_0} \prod_{j=N_0+1}^\infty \left(1  - \frac{C}{j^{3/2}}\right)\ ,
\end{eqnarray*}
where $C$ has changed in the second line. The infinite product is strictly positive since $\sum_{j=N_0}^\infty j^{-3/2} < \infty$.  The lower bound on $\Delta_{\gamma,N}$ for $N < N_0$ is taken care of by Lemma~\ref{CONJMAIN1}.
\end{proof}

From this point on, our focus  is on proving Theorem \ref{CONJMAIN2} and Lemma~\ref{CONJMAIN1}. This comes down to a careful analysis of the 
Dirichlet form  $\mathcal{G}_{\gamma,N}$ and its spectral gap $\Gamma_{\gamma,N}$. Note that  the corresponding process is much simpler than the original stochastic exchange process. At each jump, one energy $\eta_k$  is fixed and all of the other particles ``jump to uniform'' on the slice given by the fixed value of $\eta_k$.  The rate for such a jump is degenerate, but mildly so: it vanishes only if the $k^{{\rm th}}$ particle has all of the energy. 

In proving Theorem~\ref{CONJMAIN2}, we make essential use of the fact that for large $N$, the random variables are almost independent. A number of precise expressions of this fact  that we shall need are given in Section~\ref{CHABNDS}. We now explain the importance of this asymptotic independence, and at the same time, why the cases $\gamma=0$ and $\gamma =1$ are so much simpler. 

For this purpose, it is useful to rewrite the Dirichlet form $\mathcal{G}_{\gamma,N}$ in another way. Define 
\begin{equation}\label{pgdef}
P^{(\gamma)} = \frac{1}{N}\sum_{k=1}^N \left(\frac{N - \eta_k}{N-1}\right)^\gamma P_k\ .
\end{equation}
For each $k$, both $P_k$ and the multiplication operator 
${\displaystyle \left(\frac{N - \eta_k}{N-1}\right)^\gamma}$ are commuting and self adjoint, so that
$P^{(\gamma)}$ itself is self-adjoint, and even non-negative. 
Since each $P_k$ is a projection, 
\begin{equation}\label{pg2}
\frac{1}{N}\sum_{k=1}^N \int_{{\mathcal S}_N}\left(\frac{N - \eta_k}{N-1}\right)^\gamma
|P_kf|^2 \dd \sigma_N
 = \langle f, P^{(\gamma)} f\rangle_{\cH_N}\ .
 \end{equation}
 
 Define the function $W^{(\gamma)}$ by
 \begin{equation}\label{wgdef}
W^{(\gamma)}(\boldsymbol{\eta}) := \frac{1}{N}\sum_{k=1}^N  w^{(\gamma)}_N(\eta_k) = \frac{1}{N}\sum_{k=1}^N \left(\frac{N - \eta_k}{N-1}\right)^\gamma \ .
\end{equation}

We may now  rewrite $\mathcal{G}_{\gamma,N}$ as
\begin{equation}\label{difform}
\mathcal{G}_{\gamma,N}(f,f)  = \int_{{\mathcal S}_N} W^{(\gamma)}_N f^2\dd \sigma_N
  - \langle f, P^{(\gamma)} f\rangle\ .
  \end{equation}
From  an upper bound of $P^{(\gamma)}$, and  a lower bound on $w^{(\gamma)}_N$, we can deduce  a lower bound on $\mathcal{G}_{\gamma,N}$.

Note that $W^{(0)} = W^{(1)} =1$.   Therefore, denoting with $\one$ the identity operator on $\mathcal{H}_N$,
\begin{equation}\label{SIMCAS}
\mathcal{G}_{0,N}(f,f) = \langle f,(\one  - P^{(0)})f\rangle \quad {\rm and}\quad  \mathcal{G}_{1,N}(f,f) = \langle f,( \one- P^{(1)})f\rangle_{\cH_N}\ .
  \end{equation}
Hence for these two cases, we only need information on the spectrum of $P^{(0)}$ and  $P^{(1)}$.  One can see from \eqref{wgdef} and the Law of Large Numbers  that if the random variables $\eta_1,\dots,\eta_N$ were exactly independent, then for large $N$, 
$W^{(\gamma)}$ would be very close to the constant ${\mathbb E}[W^{(\gamma)}]$. However this alone is not enough. The proof of Theorem~\ref{CONJMAIN2} is accomplished by showing that trial functions $f$ for which $\langle f,P^{(\gamma)}f\rangle_{\cH_N}$
is large, $f^2$ cannot concentrate too much mass in places where 
$W^{(\gamma)}$ is significantly below $1$.

\section{Chaoticity bounds}\label{CHABNDS}

The operator $P^{(0)}$ is an average over $N$ orthogonal projections. If  $\eta_1,\dots,\eta_N$ were independent random variables on 
${\mathcal S}_N$, these projections would commute, and the intersection of their ranges would be spanned by the constant function $1$. In this case, 
the spectrum of $P^{(0)}$ would be the set $\{1,\frac1N, 0\}$, with the constant function $1$ spanning the eigenspace with eigenvalue $1$, and the 
eigenspace for $\frac1N$ being spanned by functions of the form $f(\bset) = \varphi(\eta_k)$ for some $k$, with $\varphi$ such that ${\mathbb E}[\varphi(\eta_k)] =0$. 

Because of the constraint $\sum_{k=1}^N \eta_k = N$, $\eta_1,\dots,\eta_N$ are not independent, but for large $N$  we expect the dependence to be weak. 
In this section we prove several quantitative bounds on the dependence of these variables that are crucial for what follows.

\begin{defi}\label{KNHSDEF}
Let $\K_N$ denote the Hilbert space 
$L^2([0,N],\nu_N)$
where $\nu_N$ is the probability measure defined in \eqref{BETAMARG}.
\end{defi}

Define the map $T_N:[0,N]\times {\mathcal S}_{N-1} \to \mathcal{S}_N$ by
\begin{equation}\label{TNDEF}
T_N(\eta,{\boldsymbol \xi}) = \left(\frac{N-\eta}{N-1} \xi_1,\dots \frac{N-\eta}{N-1} \xi_{N-1}, \eta\right)\ .
\end{equation}
It is easy to check that the push-forward under $T_N$ of $\nu_N\otimes \sigma_{N-1}$ is $\sigma_N$.

\begin{defi}\label{KOPDEF}  The \emph{correlation operator} $K$ on $\K_N$ is defined by 
$$
K\varphi(\eta) = \mathbb{E}\{\varphi\left(\eta_{1}\right)|\eta_N = \eta\}\ .
$$
Defining $\pi_k:\mathcal{S}_N\to [0,N]$ by $\pi_k(\bset)= \eta_k$, we may relate $K$ to the operator $P_N$ specified in Definition~\ref{PKOPDEF}:
\begin{equation}\label{PKREL}
K\varphi(\eta_N)  = P_N(\varphi\circ\pi_{1})(\eta_N)\ .
\end{equation}
\end{defi} 

Note that by symmetry, in \eqref{PKREL} the indices $1$ and $N$ can be replaced by any pair of distinct indices, and in particular, the roles of $1$ and $N$ can be interchanged. Then by \eqref{PKREL},
 for $\varphi,\psi\in \K_N$,  
$$\langle \psi,K\varphi\rangle  = 
{\mathbb E}\left( (\psi\circ\pi_{N})
(\varphi\circ \pi_1)\right) = {\mathbb E}\left( (\psi\circ\pi_{1})
(\varphi\circ \pi_N)\right)  = \langle K\psi,\varphi\rangle\ .
$$
Thus, $K$ is self-adjoint, and we now determine its spectrum.

By \eqref{TNDEF},
\begin{equation}\label{Kform}
K\varphi(\eta) = \int_0^N \varphi\left( \frac{N-\eta}{N-1} x\right){\rm d}\nu_{N-1}(x) \ .
\end{equation}

From this explicit expression, we conclude that for any positive integer $m$, the space of polynomials of degree at most $m$ is invariant under $K$. 
To see this, let $p_n(x) := x^n$, $n$ a non-negative integer. Then,
using the map $T_N$ defined in \eqref{TNDEF}
\begin{equation}\label{Kpoly1}
Kp_n(\eta) = \left(\frac{N-\eta}{N-1}\right)^n  \int_0^{N-1} x^n{\rm d}\nu_{N-1}(x) = (N-\eta)^n \left((N-2)\int_0^1 y^n (1-y)^{N-3}{\rm d}y\right)\ .
\end{equation}
The integral on the right is a  $\beta$ function integral, and in any case,  integration by parts yields
\begin{equation}\label{BETAINT}
(N-2)\int_0^1 y^n (1-y)^{N-3}{\rm d}y = \frac{n!(N-2)!}{(n+N-2)!}.
\end{equation}
Thus the subspace ${\mathcal P}_n$ of $\K_N$ consisting of polynomials of degree $n$ or less is invariant under $K$.  Since $K$ is self adjoint, 
${\mathcal P}_n^\perp$ is also invariant, and then so is the one dimensional space  ${\mathcal P}_n \cap {\mathcal P}_{n-1}^\perp$. 
This is precisely the space spanned by the $n^\mathrm{th}$ polynomial in the sequence $\{\phi_n\}_{n\geq 0}$ of orthogonal polynomials obtained by applying the 
Gram-Schmidt procedure to the sequence $\{p_n\}_{n\geq 0}$ and using the inner product in $\K_N$. Hence for each $n$, $\phi_n$ is an eigenvector of $K$. Let $\kappa_n$ denote the associated eigenvalue. 

Then expanding the monomial $p_n$ in this basis,
${\displaystyle p_n = \sum_{j=0}^n \alpha_j\phi_j}$ where $\alpha_n \neq 0$, which we may write as $p_n = \alpha_n\phi_n + q$
where $q$ is a polynomial of degree at most $n-1$. Then 
$$\kappa_n\phi_n = K\phi_n = \frac{1}{\alpha_n}K\left( p_n - q\right) = (N-\eta)^n  \frac{n!(N-2)!}{(n+N-2)!} - Kq\ .$$
Therefore,  
$$
\kappa_n\phi_n - (-1)^n\eta^n \frac{n!(N-2)!}{(n+N-2)!}
$$
is a polynomial of degree at most  $n-1$. The left hand side of  \eqref{BETAINT} clearly decreases as $n$ increases, showing that $|\kappa_n|$ is a decreasing function of $n$. This proves:

\begin{thm}\label{KOPSPEC} Let $\{\phi_n\}_{n\geq 0}$ be the sequence of orthogonal polynomials obtained by applying the 
Gram-Schmidt procedure to the sequence $\{p_n\}_{n\geq 0}$. Then $\{\phi_n\}_{n\geq 0}$ is an orthonormal basis of $\K_N$ consisting of eigenvectors of $K$ wth $K\phi_n = \kappa_n\phi_n$ where 
\begin{equation}\label{KAPPn}
\kappa_n = (-1)^n\frac{n!(N-2)!}{(n+N-2)!}\ .
\end{equation}
For all $n$,  $|\kappa_{n+1}| \leq |\kappa_n|$.
\end{thm}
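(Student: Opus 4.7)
The plan is to complete the argument already sketched in the text preceding the statement. Essentially all the work is done once \eqref{Kpoly1} and \eqref{BETAINT} have been established; what remains is bookkeeping on leading coefficients plus one standard density fact.

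First I would justify that $\{\phi_n\}_{n\geq 0}$ is an orthonormal basis of $\K_N$. By construction the $\phi_n$ are orthonormal, so only completeness is at issue. Since $\nu_N$ is a probability measure supported on the compact interval $[0,N]$, polynomials are dense in $C([0,N])$ by Stone–Weierstrass, and $C([0,N])$ is dense in $L^2([0,N],\nu_N)$ since $\nu_N$ is finite with bounded support. Hence $\bigcup_n \mathcal{P}_n$ is dense in $\K_N$, so $\{\phi_n\}$ is complete. The $K$-invariance of $\mathcal{P}_n$ (shown via \eqref{Kpoly1}) together with self-adjointness of $K$ forces $\mathcal{P}_n^\perp$ to be invariant, so the one-dimensional space $\mathcal{P}_n \cap \mathcal{P}_{n-1}^\perp = \mathbb{R}\phi_n$ is invariant, and therefore $K\phi_n = \kappa_n \phi_n$ for some real $\kappa_n$.

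Next I would identify $\kappa_n$ by matching leading coefficients. Expanding $p_n = \alpha_n \phi_n + q$ with $\deg q \leq n-1$ and $\alpha_n\neq 0$, the leading coefficient of $\phi_n$ is $\alpha_n^{-1}$. Applying $K$ gives
\begin{equation*}
\alpha_n \kappa_n \phi_n \;=\; \kappa_n(\alpha_n\phi_n) \;=\; K p_n - K q \;=\; \frac{n!(N-2)!}{(n+N-2)!}\,(N-\eta)^n - Kq,
\end{equation*}
and $Kq \in \mathcal{P}_{n-1}$ because $\mathcal{P}_{n-1}$ is $K$-invariant. Reading off the coefficient of $\eta^n$ on each side, the left side yields $\alpha_n \kappa_n \cdot \alpha_n^{-1} = \kappa_n$, while on the right only the first term contributes, giving $(-1)^n \, n!(N-2)!/(n+N-2)!$. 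Therefore $\kappa_n = (-1)^n \, n!(N-2)!/(n+N-2)!$, which is \eqref{KAPPn}.

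Finally, monotonicity of $|\kappa_n|$ is immediate from the explicit formula: $|\kappa_{n+1}|/|\kappa_n| = (n+1)/(n+N-1) \leq 1$ whenever $N \geq 2$. There is no real obstacle in this argument — the hard part, namely the explicit computation \eqref{Kpoly1} showing that $K$ sends $\eta^n$ to a polynomial of exact degree $n$ with an explicit leading coefficient, has already been carried out.
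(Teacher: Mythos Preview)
Your proof is correct and follows essentially the same approach as the paper: the eigenvector property via invariance of $\mathcal{P}_n$ and $\mathcal{P}_n^\perp$, and the identification of $\kappa_n$ by matching the coefficient of $\eta^n$ after applying $K$ to $p_n = \alpha_n\phi_n + q$. You add a brief Stone--Weierstrass justification of completeness (which the paper leaves implicit) and track the $\alpha_n$ factors more carefully, but the argument is the same.
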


The first three eigenvalues are
\begin{equation}\label{Kpoly2}
\kappa_0 = 1\ ,\quad \kappa_1 = -\frac{1}{N-1}\quad{\rm and}\quad \kappa_2  = \frac{2}{N(N-1)}\ ,
\end{equation}
and the first two normalized eigenfunctions are
\begin{equation}\label{Kpoly2EF}
\phi_0 = 1\quad{\rm and}\quad \phi_1(\eta) = \sqrt{\frac{N+1}{N-1}}(\eta -1)\ .\
\end{equation}

The same method for determining the spectrum of the  analogous $K$ operator for the Kac model was used in \cite{CCL00,CCL03}, and has been applied to the 
$K$ operator for a class of models including the Stochastic Exchange model in \cite{GF08}. An analysis of the spectrum of the $K$ operator has been made by 
Caputo \cite{Cap03} in a setting in which the uniform probability measure on $\STE$ is replaced by a fairly general conditioned product measure, 
such as those discussed in the final section of this paper. While Caputo's result, \cite[Theorem~4.1]{Cap03}, covers a broad range of 
reference measures, the results are less precise than the specific bounds provided by 
Theorem~\ref{KOPSPEC}, and these bounds are needed in our application.

\begin{defi}\label{ANDEFN} Let $\cA_N$
denote the subspace of $\cH_N$ consisting of functions $f$ of the form 
\begin{equation}\label{ANDEF1}
f(\bset)= \sum_{j=1}^N \varphi_j(\eta_j)\ .
\end{equation}
with the further condition that ${\mathbb E}[f] =0$. 
\end{defi}
Note that ${\displaystyle \sum_{j=1}^N {\mathbb E}[\varphi_j(\eta_j)] =  {\mathbb E}[f]= 0}$. Therefore subtracting ${\mathbb E}[\varphi_j(\eta_j)]$ from 
$\varphi_j(\eta_j)$ for each $j$ does not change the sum in \eqref{ANDEF1}, and we may freely suppose that the functions $\varphi_j(\eta_j)$ in 
\eqref{ANDEF1} are such that ${\mathbb E}[\varphi_j(\eta_j)]= 0$.

Because $\sum_{j=1}^N (\eta_j -1) = 0$, for any $\alpha\in \R$,  we may also subtract $\alpha(\eta_j -1)$ from each $\varphi_j(\eta_j)$  
without changing the sum in \eqref{ANDEF1}.  We shall choose $\alpha$ to make ${\displaystyle \sum_{j=1}^N\|\varphi_j\|_2^2}$ as small as possible as $\alpha$ is varied.

Expand each $\varphi_j$ in the orthonormal basis $\{\phi_n\}_{n\geq 0}$
${\displaystyle
\varphi_j = \sum_{k=1}^\infty \alpha_{j,k}\phi_k }$
where the sum starts from $k=1$ since  $\alpha_{j,0} = \langle \phi_0,\varphi_j\rangle =  {\mathbb E}[\varphi_j(\eta_j)]= 0$.
Then
$$
\sum_{j=1}^N \|\varphi_j\|_2^2 = \sum_{j=1}^N\left( (\alpha_{j,1}-\alpha)^2 + \sum_{k=2}^\infty |\alpha_{j,k}|^2\right)\ .
$$
This is minimized by choosing $\alpha := \frac1N \sum_{j=1}^N \alpha_{j,1}$.   We make this choice, and note that consequently
\begin{equation}\label{ANDEF3}
\sum_{j=1}^N \alpha_{j,1} = 0\ .
\end{equation}
We have proved the following lemma:

\begin{lm}\label{CANREP} Every function $f\in \cA_N$ has a canonical representation  ${\displaystyle f(\bset)= \sum_{j=1}^N \varphi_j(\eta_j)}$ where, for $j=1,\dots,N$, $\varphi_j\colon \mathbb{R}\to\mathbb{R}$ is such that $\varphi(\eta_j)\in\mathcal{H}_N$
with
${\mathbb E}[\varphi_j(\eta_j)] =0$ and
\begin{equation}\label{CANREP1}
\sum_{j=1}^N {\mathbb E}[\varphi_j(\eta_j)(\eta_j -1)] = \sqrt{\frac{N-1}{N+1}}\sum_{j=1}^N {\mathbb E}[\varphi_j(\eta_j)\phi_1(\eta_j)]= 0\ .
\end{equation}
\end{lm}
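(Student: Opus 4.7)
The plan is to construct the canonical representative within the gauge equivalence class by exploiting two natural freedoms available in writing $f = \sum_{j=1}^N \varphi_j(\eta_j)$: the freedom to shift each $\varphi_j$ by a constant $c_j$ with $\sum_j c_j = 0$, and the freedom induced by the linear relation $\sum_{j=1}^N (\eta_j - 1) \equiv 0$ on $\mathcal{S}_N$, which permits subtracting $\alpha(\eta_j - 1)$ from each $\varphi_j$ for any single scalar $\alpha$ without altering $f$. These two freedoms are compatible because $\eta_j - 1$ is itself $\nu_N$-centered, so the second adjustment preserves any mean-zero property established in the first.

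Starting from an arbitrary representation, I would first enforce $\mathbb{E}[\varphi_j(\eta_j)] = 0$ for each $j$ by replacing $\varphi_j$ with $\varphi_j - \mathbb{E}[\varphi_j(\eta_j)]$. Since $\sum_j \mathbb{E}[\varphi_j(\eta_j)] = \mathbb{E}[f] = 0$, the subtracted constants sum to zero, so $f$ is unchanged. Next I would expand each centered $\varphi_j$ in the orthonormal basis $\{\phi_n\}_{n \geq 0}$ of $\mathcal{K}_N$ provided by Theorem~\ref{KOPSPEC}, writing $\varphi_j = \sum_{k \geq 1} \alpha_{j,k}\phi_k$ (no $\phi_0$ term, thanks to centering). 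Under the remaining gauge move $\varphi_j \mapsto \varphi_j - \alpha(\eta_j - 1) = \varphi_j - \alpha\sqrt{(N-1)/(N+1)}\,\phi_1(\eta_j)$, only the first Fourier coefficient is affected, shifting to $\alpha_{j,1} - \alpha\sqrt{(N-1)/(N+1)}$, while the mean-zero condition on each $\varphi_j$ is preserved.

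Finally I would select $\alpha$ to minimize the total $L^2$ cost $\sum_j \|\varphi_j\|_2^2 = \sum_j \bigl((\alpha_{j,1} - \alpha\sqrt{(N-1)/(N+1)})^2 + \sum_{k \geq 2} |\alpha_{j,k}|^2\bigr)$. This is a scalar quadratic in $\alpha$ whose unique minimizer is $\alpha\sqrt{(N-1)/(N+1)} = \frac{1}{N}\sum_j \alpha_{j,1}$, which after the shift produces new first coefficients summing to zero, i.e.\ $\sum_j \alpha_{j,1} = 0$. Since $\alpha_{j,1} = \mathbb{E}[\varphi_j(\eta_j)\phi_1(\eta_j)] = \sqrt{(N+1)/(N-1)}\,\mathbb{E}[\varphi_j(\eta_j)(\eta_j - 1)]$, this is precisely the claimed identity. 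There is no genuine obstacle here; the argument is an elementary normalization, and the only delicate point is the compatibility of the two gauge moves, which holds because $\mathbb{E}[\eta_j - 1] = 0$. The fact that a single scalar $\alpha$ suffices to enforce the single linear constraint $\sum_j \alpha_{j,1} = 0$ is what makes the canonical representative exist and be essentially unique.
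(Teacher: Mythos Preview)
Your proposal is correct and follows essentially the same route as the paper's proof: exploit the two gauge freedoms (centering each $\varphi_j$, then subtracting a common multiple of $\eta_j-1$), and fix the remaining scalar by minimizing $\sum_j\|\varphi_j\|_2^2$, which forces $\sum_j\alpha_{j,1}=0$. If anything, you are slightly more careful than the paper in tracking the normalization factor $\sqrt{(N-1)/(N+1)}$ between $\eta_j-1$ and $\phi_1(\eta_j)$ when writing the shifted first Fourier coefficient.
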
 

\begin{thm}\label{CHAOS} For $f\in \cA_N$, let  ${\displaystyle f(\bset)= \sum_{j=1}^N \varphi_j(\eta_j)}$ be its canonical representation as provided by Lemma~\ref{CANREP}. 
Then for all $N\geq 2$, 
\begin{equation}\label{ANDEF5}
\left(1 -  \frac{2}{N}\right )\sum_{j=1}^N\|\varphi_j\|_2^2 \leq \|f\|_2^2 \leq   \left(1 +  \frac{2}{N}\right)\sum_{j=1}^N\|\varphi_j\|_2^2 
\end{equation} 
\end{thm}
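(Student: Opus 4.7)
The plan is to expand each $\varphi_j$ in the eigenbasis of the correlation operator $K$ from Theorem~\ref{KOPSPEC}, and then to evaluate $\|f\|_2^2$ by computing cross-terms via $K$.

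First I would write $\varphi_j = \sum_{n\geq 1} \alpha_{j,n}\phi_n$, where the sum starts at $n=1$ because $\mathbb{E}[\varphi_j]=0$ forces $\alpha_{j,0}=\langle\phi_0,\varphi_j\rangle=0$. Then I would observe that for $j\neq k$, by the tower property and the symmetry remarked after \eqref{PKREL},
\begin{equation*}
\mathbb{E}[\varphi_j(\eta_j)\varphi_k(\eta_k)] = \mathbb{E}\bigl[\varphi_j(\eta_j)\,\mathbb{E}\{\varphi_k(\eta_k)\mid \eta_j\}\bigr] = \langle \varphi_j, K\varphi_k\rangle_{\K_N}.
\end{equation*}
Expanding $f^2$ and using $K\phi_n=\kappa_n\phi_n$, this yields the clean decomposition
\begin{equation*}
\|f\|_2^2 = \sum_{n\geq 1}\Bigl[(1-\kappa_n)\sum_{j=1}^N |\alpha_{j,n}|^2 + \kappa_n\Bigl|\sum_{j=1}^N \alpha_{j,n}\Bigr|^2\Bigr].
\end{equation*}

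Next I would exploit the canonical representation. The condition \eqref{CANREP1} from Lemma~\ref{CANREP}, rewritten in the $\phi_n$-basis via $\phi_1(\eta)=\sqrt{(N+1)/(N-1)}(\eta-1)$, is precisely $\sum_{j=1}^N \alpha_{j,1}=0$, so the cross-term at $n=1$ vanishes and the $n=1$ contribution equals
\begin{equation*}
(1-\kappa_1)\sum_{j=1}^N |\alpha_{j,1}|^2 = \frac{N}{N-1}\sum_{j=1}^N |\alpha_{j,1}|^2.
\end{equation*}
Since $\tfrac{1}{N-1}\leq \tfrac{2}{N}$ for $N\geq 2$, this coefficient lies in $[1-\tfrac{2}{N},\,1+\tfrac{2}{N}]$.

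For $n\geq 2$, I would use Cauchy–Schwarz in the form $|\sum_j \alpha_{j,n}|^2\leq N\sum_j|\alpha_{j,n}|^2$, together with the eigenvalue bound $|\kappa_n|\leq |\kappa_2|=\tfrac{2}{N(N-1)}$ from Theorem~\ref{KOPSPEC}. Thus the bracketed coefficient at level $n$ is squeezed between $(1-\kappa_n)\sum_j|\alpha_{j,n}|^2$ and $(1-\kappa_n+N\kappa_n)\sum_j|\alpha_{j,n}|^2$ (in whichever order the sign of $\kappa_n$ dictates), and in either case it lies in
\begin{equation*}
\bigl[1-(N-1)|\kappa_n|,\; 1+(N-1)|\kappa_n|\bigr]\sum_{j=1}^N|\alpha_{j,n}|^2 \subset \bigl[1-\tfrac{2}{N},\,1+\tfrac{2}{N}\bigr]\sum_{j=1}^N|\alpha_{j,n}|^2.
\end{equation*}
Summing over $n\geq 1$ and using $\sum_{n\geq 1}|\alpha_{j,n}|^2=\|\varphi_j\|_2^2$ gives the desired two-sided estimate.

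The only delicate point is handling the alternating signs of $\kappa_n$ uniformly; that is why I would phrase the $n\geq 2$ bounds using $|\kappa_n|$ rather than tracking even versus odd $n$. The $n=1$ term is the whole reason the canonical representation is defined: without $\sum_j\alpha_{j,1}=0$, the cross-term $\kappa_1|\sum_j\alpha_{j,1}|^2$ could produce an error of order $N|\kappa_1|=\tfrac{N}{N-1}$, which would completely destroy the estimate.
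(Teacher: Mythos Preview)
Your proof is correct and follows essentially the same approach as the paper: both expand in the eigenbasis $\{\phi_n\}$ of $K$, use the canonical condition $\sum_j\alpha_{j,1}=0$ to neutralize the dangerous $n=1$ level, and control the remaining levels via $|\kappa_n|\leq\kappa_2=\tfrac{2}{N(N-1)}$. The only cosmetic difference is that the paper splits $\varphi_j=\alpha_{j,1}\phi_1+\widetilde\varphi_j$ and bounds $\sum_{j\neq k}\langle\widetilde\varphi_j,K\widetilde\varphi_k\rangle$ by the operator norm of $K$ on $\{\phi_0,\phi_1\}^\perp$, whereas you derive the closed identity $\|f\|_2^2=\sum_{n\geq 1}\bigl[(1-\kappa_n)\sum_j\alpha_{j,n}^2+\kappa_n(\sum_j\alpha_{j,n})^2\bigr]$ and bound each $n$ separately; the ingredients and the resulting constants are identical.
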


\begin{proof} We compute ${\displaystyle \|f\|_2^2 = \sum_{j=1}^N \|\varphi_j\|_2^2 + \sum_{j\neq k}{\mathbb E}[\varphi_j(\eta_j)\varphi_k(\eta_k)]}$. Define
$\widetilde{\varphi}_j := \varphi_j - \alpha_{j,1}\phi_1$.
Then since by  \eqref{ANDEF3}, 
$\sum_{k\colon k\neq j}\alpha_{k,1} = -\alpha_{j,1}$,
\begin{align*}
&\sum_{j\neq k}{\mathbb E}[\varphi_j(\eta_j)\varphi_k(\eta_k)]
=\sum_{j\neq k}\langle \varphi_j,K\varphi_k\rangle\\
&= -\sum_{j\neq k}\alpha_{j,1}\alpha_{k,1}\frac{1}{N-1}\langle \phi_1,\phi_1\rangle + 
  \sum_{j\neq k}\langle \widetilde{\varphi}_j,K\widetilde{\varphi}_k\rangle
  = 
  \frac{1}{N-1}\sum_{j=1}^N\alpha^2_{j,1} + 
  \sum_{j\neq k}\langle \widetilde{\varphi}_j,K\widetilde{\varphi}_k\rangle.
\end{align*}
Next, for $j\neq k$, 
$$|\langle \widetilde{\varphi}_j,K\widetilde{\varphi}_k\rangle| \leq \kappa_2\|\widetilde{\varphi}_j\|_2\|\widetilde{\varphi}_k\|_2 = \frac{2}{N(N-1)}\|\widetilde{\varphi}_j\|_2\|\widetilde{\varphi}_k\|_2 \leq \frac{1}{N(N-1)}(\|\widetilde{\varphi}_j\|_2^2+\|\widetilde{\varphi}_k\|_2^2)\ .$$
Therefore,
\begin{align*}
\sum_{j\neq k}|\langle \widetilde{\varphi}_j,K\widetilde{\varphi}_k\rangle|
\leq \frac{2}{N} \sum_{j=1}^N\left(\sum_{k=2}^\infty |\alpha_{j,k}|^2\right)
\leq \frac{2}{N} \sum_{j=1}^N\|\varphi_j\|_2^2\\ ,
\end{align*}
and \eqref{ANDEF5} follows.
\end{proof}

\begin{remark}\label{SUMREM}Let $\bigoplus^N \K_N$ denote the direct sum of $N$ copies of $\K_N$. Define  ${\mathcal B}_N$ to be the subspace of $\bigoplus ^N {\mathcal K}_{N}$ consisting of $(\varphi_1(\eta_1),\dots,\varphi_N(\eta_N))$ such that \eqref{CANREP1} is satisfied.  As a consequence of Theorem~\ref{CHAOS},
 the operator ${\mathcal T}: {\mathcal B}_N \to {\mathcal A}_N$ defined by 
 by
 $${\mathcal T}(\varphi_1(v_1),\dots,\varphi_N(v_N)) = \sum_{k=1}^N \varphi_k(v_k)$$
 is bounded and has a bounded inverse. 
 \end{remark}
 
 We now determine the spectrum of $P^{(0)}$.  An equivalent result has been obtained in \cite{GF08} by different method based on the original approach for the Kac model in \cite{CCL00,CCL03}. 
 
 \begin{lm}\label{MUNLEM}  For all $N\geq 3$, the spectrum of $P^{(0)}$ consists entirely of eigenvalues.
Let $\mu_N$ denote the second largest eigenvalue of $P^{(0)}$ after the eigenvalue $1$. Then  for $N\geq 3$, 
\begin{equation}\label{mu0val}
\mu_N = \frac1N  + \frac{2}{N^2}\ .
\end{equation}
\end{lm}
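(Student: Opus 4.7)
The plan is to diagonalize $P^{(0)}$ by exploiting the splitting $\cH_N = \R\cdot 1 \oplus \cA_N \oplus \cA_N^{\perp}$, where $\cA_N^{\perp}$ denotes the orthogonal complement of $\R\cdot 1 \oplus \cA_N$ inside $\cH_N$. Constants are fixed by each $P_k$, so $P^{(0)}$ acts as $1$ on $\R\cdot 1$. Any $g\in\cA_N^{\perp}$ is orthogonal to every function of a single $\eta_k$, which is exactly the statement that $P_k g=0$ for each $k$, so $P^{(0)}$ vanishes on $\cA_N^{\perp}$. The subspace $\cA_N$ is invariant under $P^{(0)}$ because $P_k f$ is a mean-zero function of $\eta_k$ whenever $f\in\cA_N$. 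Thus it suffices to analyse $P^{(0)}|_{\cA_N}$, and once I show this restriction is diagonalizable with a discrete spectrum, the first claim of the lemma follows.

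Using Lemma~\ref{CANREP}, write each $f\in\cA_N$ in canonical form $f=\sum_j\varphi_j(\eta_j)$ with $\mathbb{E}[\varphi_j]=0$ and $\sum_j\langle\varphi_j,\phi_1\rangle=0$. For $j\neq k$, the defining symmetry of $K$ (equation \eqref{PKREL}) gives $\mathbb{E}[\varphi_j(\eta_j)\mid\eta_k]=(K\varphi_j)(\eta_k)$, and a direct computation then yields
\begin{equation*}
P^{(0)}f(\bset) = \sum_{k=1}^N \tilde\varphi_k(\eta_k),\qquad \tilde\varphi_k \;=\; \tfrac{1}{N}(I-K)\varphi_k + K\bar\varphi,\qquad \bar\varphi:=\tfrac{1}{N}\sum_{j=1}^N\varphi_j.
\end{equation*}
A short check using $K\phi_1=\kappa_1\phi_1=-\tfrac{1}{N-1}\phi_1$ confirms that $(\tilde\varphi_k)$ is again in canonical form, so in the canonical coordinates the map $f\mapsto P^{(0)}f$ becomes the explicit linear map $\varphi_k\mapsto\tfrac{1}{N}(I-K)\varphi_k+K\bar\varphi$.

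Now expand each $\varphi_j$ in the Gram--Schmidt basis $\{\phi_n\}_{n\geq 1}$ of the mean-zero subspace of $\K_N$: $\varphi_j=\sum_{n\geq 1}\alpha_{j,n}\phi_n$. By Theorem~\ref{KOPSPEC} $K$ is diagonal in this basis, so the eigenvalue equation $P^{(0)}f=\mu f$ decouples across $n$ into
\begin{equation*}
\Bigl(\mu - \tfrac{1-\kappa_n}{N}\Bigr)\alpha_{k,n} \;=\; \tfrac{\kappa_n}{N}\sum_{j=1}^N\alpha_{j,n},\qquad k=1,\dots,N.
\end{equation*}
This is the eigenvalue problem in $\R^N$ for the matrix $M^{(n)}:=\tfrac{1-\kappa_n}{N}I+\tfrac{\kappa_n}{N}J$, where $J$ is the all-ones matrix. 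Its spectrum is $\tfrac{1+(N-1)\kappa_n}{N}$ (simple, eigenvector $\mathbf{1}$) and $\tfrac{1-\kappa_n}{N}$ (multiplicity $N-1$, orthogonal to $\mathbf{1}$). The canonical condition $\sum_k\alpha_{k,1}=0$ removes the $\mathbf{1}$-eigenvector only in the $n=1$ block, so the spectrum of $P^{(0)}|_{\cA_N}$ is the union, over $n\geq 1$, of the eigenvalues of $M^{(n)}$ with this one exclusion. In particular the spectrum is pure point, proving the first claim.

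It remains to identify the maximum. Plugging in $\kappa_1=-\tfrac{1}{N-1}$ and $\kappa_2=\tfrac{2}{N(N-1)}$ gives the two leading candidates $\mu_1^{(1)}=\tfrac{1-\kappa_1}{N}=\tfrac{1}{N-1}$ and $\mu_2^{(2)}=\tfrac{1+(N-1)\kappa_2}{N}=\tfrac{1}{N}+\tfrac{2}{N^2}$. Since $|\kappa_n|$ is strictly decreasing (Theorem~\ref{KOPSPEC}), every remaining eigenvalue from blocks $n\geq 3$ lies in $\bigl[\tfrac{1-|\kappa_3|}{N},\tfrac{1+(N-1)|\kappa_3|}{N}\bigr]$, and an elementary bound on $|\kappa_3|$ shows this interval sits strictly below $\mu_2^{(2)}$. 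The only delicate comparison is $\mu_2^{(2)}$ versus $\mu_1^{(1)}$, and it reduces to the algebraic inequality $N^2<(N-1)(N+2)$, valid for $N\geq 3$. The main obstacle is therefore not difficult: it is the careful derivation of the action of $P^{(0)}$ on the canonical representation and the verification that the representation stays canonical; the eigenvalue enumeration and comparison are then bookkeeping.
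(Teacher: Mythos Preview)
Your proof is correct and follows essentially the same route as the paper: both restrict $P^{(0)}$ to $\mathcal{A}_N$ via the canonical representation of Lemma~\ref{CANREP}, obtain a block operator built from $K$ and the all-ones matrix $J$, and then diagonalize using Theorem~\ref{KOPSPEC} together with the spectrum of $J$. The only cosmetic difference is the order of diagonalization---the paper passes first to eigenvectors of $J$ (obtaining the blocks $\tfrac{1}{N}(I+(N-1)K)$ and $\tfrac{1}{N}(I-K)$) and then invokes the spectrum of $K$, whereas you expand in the $\phi_n$ basis first and then diagonalize the resulting $N\times N$ matrices $M^{(n)}$; your explicit check that $(\tilde\varphi_k)$ remains in canonical form makes transparent a point the paper leaves implicit.
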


\begin{proof}The range of $P^{(0)}$ is ${\mathcal A}_N$, and it suffices to determine the spectrum of $P^{(0)}$ as an operator on ${\mathcal A}_N$. For 
${\displaystyle f(\boldsymbol{\eta}) = \sum_{j=1}^N \varphi_j(\eta_j)\in {\mathcal A}_N}$, we compute, using \eqref{PKREL} and symmetry in the coordinates,
$$P^{(0)} \left( \sum_{j=1}^N \varphi_j(\eta_j)\right) = \frac1N \sum_{j=1}^N \left(\varphi_j(\eta_j) + \sum_{k\neq j} K\varphi_k(\eta_j)\right)\ .$$
Let ${\mathcal T}: {\mathcal B}_N \to {\mathcal A}_N$ be defined as in Remark~\ref{SUMREM}. Then by  this computation, 
$ {\mathcal T}^{-1}P^{(0)} {\mathcal T}$
can be written as an $N\times N$ block matrix operator 
whose entries $M_{i,j}^{(0)}$ are given by 
$$M_{i,j}^{(0)} = \frac1N I  \quad{\rm if}\  i=j\quad{\rm and}\quad M_{i,j}^{(0)} = \frac1N K \quad{\rm if}\  i\neq j\ , $$
where $I$ is the identity operator on $\mathcal{K}_N$.

This  is  unitarily equivalent to the block matrix operator 
$$
\frac1N\left[\begin{array}{cccc} 
I + (N-1)K & 0    & \cdots & 0\\
0 & I-K  &\cdots & 0\\
\vdots & \vdots &  \ddots & \vdots \\
0 & 0   & 0 & I -K
\end{array}
\right]\ .
$$
By Theorem~\ref{KOPSPEC} $K$ is compact, and hence the spectrum of this operator consists entirely of eigenvalues. If $\lambda$ is any eigenvalue, it follows that either $\lambda$ is an eigenvalue of $I + (N-1)K$ or else $\lambda$ is an eigenvalue of $I - K$. Thus, the second largest eigenvalue of 
$P^{(0)}$, is 
either $\frac{1 + (N-1)\kappa_2}{N}$, where $\kappa_2$ is the second largest eigenvalue of $K$ namely $\frac{2}{N(N-1)}$, or else
$\frac{1-\kappa_1}{N}$ where $\kappa_1$ is the least eigenvalue of $K$, namely $-\frac{1}{N-1}$.  The two alternatives are
$$
\frac1N  + \frac{2}{N^2}
\quad{\rm and}\quad  
\frac1N + \frac{1}{N(N-1)}\ .
$$
The first is larger for all $N\geq 2$, and hence this gives the second largest eigenvalue.
\end{proof}

In Section~\ref{sec:sgh} we will need the following result,
 which also expresses a quantitative measure of the near-independence of the random variables $\eta_1,\dots,\eta_N$.
It is comparatively simple and can be obtained by explicit computation using \eqref{Kpoly1}, an obvious symmetry argument, and the elementary observation that for $\eta\in[0,N]$,  $\sigma_N(\cdot|\eta_N=\eta)=\sigma_{N-1,\frac{N-\eta}{N-1}}\otimes\delta_\eta$, where $\delta_\eta$ is the Dirac measure on $[0,N]$.

\begin{lm}\label{SUPBND1} For all $N\geq 3$, $i$, $j$ and $k$ distinct, and $\eta,\xi\geq0$ such that $\eta+\xi\leq N$, 
\begin{align*}
    {\mathbb E}\{ \eta_{i}^2 | \eta_j =\eta\}&=\frac{2(N-\eta)^2}{N(N-1)}\leq 3\ ,
    \qquad
    {\mathbb E}\{ \eta_{i}^2 | \eta_j =\eta,\eta_k =\xi\}=\frac{2(N-\eta-\xi)^2}{(N-1)(N-2)}\leq 9\ ,\\
  &{\mathbb E}\{ \eta_{i}^4 | \eta_j =\eta\}=\frac{24(N-\eta)^4}{(N+2)(N+1)N(N-1)}\leq 24\ .
\end{align*}
\end{lm}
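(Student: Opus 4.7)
The plan is to read the first and third identities directly off \eqref{Kpoly1}, to obtain the second identity by iterating the disintegration of $\sigma_N$ given one coordinate, and to derive the stated numerical bounds from elementary inequalities in $N$.

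First, by the symmetry of $\sigma_N$ under coordinate permutations, I can assume without loss of generality that $i=1$ and $j=N$ throughout. Then $\mathbb{E}\{\eta_i^n\mid \eta_j=\eta\}=Kp_n(\eta)$, and \eqref{Kpoly1} reads
\[
Kp_n(\eta)=(N-\eta)^n\,\frac{n!(N-2)!}{(n+N-2)!}.
\]
Substituting $n=2$ yields the first identity and $n=4$ yields the third, with no further work needed.

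For the second identity, I use the observation recalled just before the lemma: $\sigma_N(\cdot\mid\eta_N=\eta)=\sigma_{N-1,\frac{N-\eta}{N-1}}\otimes\delta_\eta$. By a second application of this fact at level $N-1$, combined with symmetry to place the conditioned indices in the last two slots,
\[
\mathbb{E}\{\eta_1^2\mid \eta_{N-1}=\xi,\,\eta_N=\eta\}
=\mathbb{E}_{\sigma_{N-1,\frac{N-\eta}{N-1}}}\{\eta_1^2\mid\eta_{N-1}=\xi\}.
\]
The scaling map $\boldsymbol{\eta}\mapsto \frac{N-1}{N-\eta}\boldsymbol{\eta}$ sends $\sigma_{N-1,\frac{N-\eta}{N-1}}$ to $\sigma_{N-1}$, so the right-hand side equals $\left(\frac{N-\eta}{N-1}\right)^2$ times the analogous $K$-image of $p_2$ for the $(N-1)$-particle system, evaluated at $\frac{N-1}{N-\eta}\xi$. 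Invoking \eqref{Kpoly1} with $N$ replaced by $N-1$ and $n=2$ gives $(N-1-\frac{N-1}{N-\eta}\xi)^2\cdot\frac{2}{(N-1)(N-2)}=\frac{2(N-1)(N-\eta-\xi)^2}{(N-2)(N-\eta)^2}$, and after multiplying by $\left(\frac{N-\eta}{N-1}\right)^2$ the factors $(N-\eta)^2$ and $(N-1)$ cancel to leave exactly $\frac{2(N-\eta-\xi)^2}{(N-1)(N-2)}$.

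For the numerical bounds, since $0\leq\eta$ and $\eta+\xi\leq N$, I bound $(N-\eta)$ and $(N-\eta-\xi)$ by $N$, reducing the claim to the three monotone inequalities
\[
\frac{2N}{N-1}\leq 3,\qquad \frac{2N^2}{(N-1)(N-2)}\leq 9,\qquad
\frac{24N^3}{(N+2)(N+1)(N-1)}\leq 24 \quad(N\geq 3).
\]
The first two are decreasing in $N$ and attain $3$ and $9$ respectively at $N=3$. The third is equivalent to $(N+2)(N+1)(N-1)\geq N^3$, i.e.\ $2N^2-N-2\geq 0$, which holds for every $N\geq 2$.

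I do not anticipate a serious obstacle. The only step requiring care is the iterated disintegration for the two-coordinate conditional moment, where one must track the change of scale correctly so that the explicit spectral formula \eqref{Kpoly1} is applied with the right parameters; the rest is bookkeeping.
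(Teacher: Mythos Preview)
Your proposal is correct and follows essentially the same approach that the paper indicates: the first and third identities come directly from \eqref{Kpoly1} via symmetry, the second from iterating the disintegration $\sigma_N(\cdot\mid\eta_N=\eta)=\sigma_{N-1,\frac{N-\eta}{N-1}}\otimes\delta_\eta$ with the appropriate rescaling, and the numerical bounds from the elementary monotonicity arguments you give. The algebra in your iteration step is clean and correct.
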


\section{Bounds for small $N$}\label{SMALLN}

We begin by proving a uniform lower bound on $\Delta_{\gamma,N}$ for $\gamma =0$ and then $\gamma = 1$.  For $\gamma =0$, Caputo's method for bounding $\Delta_{0,N}$ in terms of $\Delta_{0,3}$ may be applied, and he has found \cite{C08} the sharp value $\Delta_{0,N} =\frac23 \frac{N+1}{N-1}$, and hence the sharp uniform bound $\Delta_{0,N} \geq \frac23$. (Equation (2.7) in \cite{C08} must be adjusted for the different normalization of the generator used there.) The same result was obtained at the same time by Giroux and Ferland \cite{GF08} using the inductive method. 
We present a short proof here as the simplest introduction to the use of the estimates obtained in the previous section. This result  covers the $\gamma=0$ case of Theorem~\ref{CONJMAIN3}. It is worth noting that the inductive method also yields the sharp value for the gap for $\gamma= 0$ in the stochastic exchange model, as it did for the Kac model \cite{CCL00}.

\begin{thm}\label{GAMZERGAP} The spectral gap $\Gamma_{0,N}$ is strictly positive for all $N\geq 3$, and satisfies
\begin{equation}\label{GAMZERGAP0}
\Gamma_{0,N} \geq \left(1 - \frac1N - \frac{2}{N^2}\right)\ .
\end{equation}

The spectral gap $\Delta_{0,N}$ is bounded below uniformly in $N\geq 3$. Specifically,
\begin{equation}
\Delta_{0,N} \geq \frac{2}{3} \frac{N+1}{N-1}\ .
\end{equation}
\end{thm}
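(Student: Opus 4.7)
The plan is to prove the two displayed inequalities in sequence. The first, on $\Gamma_{0,N}$, comes almost immediately from the spectral information on $P^{(0)}$ collected in Lemma~\ref{MUNLEM}. The second, on $\Delta_{0,N}$, is then obtained by iterating the inductive inequality \eqref{red1} starting from the explicit value in Lemma~\ref{Delta2}.

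\emph{Step 1: the bound on $\Gamma_{0,N}$.} Since $\gamma=0$, one has $w^{(0)}_N\equiv 1$ and hence, by \eqref{difform},
\begin{equation*}
\mathcal{G}_{0,N}(f,f)=\|f\|_2^2-\langle f,P^{(0)}f\rangle.
\end{equation*}
The operator $P^{(0)}$ is self-adjoint with spectrum consisting of eigenvalues, the top one being $1$ on the constants. For $f$ orthogonal to constants with $\|f\|_2=1$, the spectral theorem combined with Lemma~\ref{MUNLEM} gives $\langle f,P^{(0)}f\rangle\leq \mu_N=\tfrac{1}{N}+\tfrac{2}{N^2}$, and hence $\mathcal{G}_{0,N}(f,f)\geq 1-\tfrac{1}{N}-\tfrac{2}{N^2}$. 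Taking the infimum over admissible $f$ yields \eqref{GAMZERGAP0}.

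\emph{Step 2: the bound on $\Delta_{0,N}$.} Plugging \eqref{GAMZERGAP0} into the recursion \eqref{red1} and rewriting $1-\tfrac{1}{N}-\tfrac{2}{N^2}=\tfrac{(N-2)(N+1)}{N^2}$ gives
\begin{equation*}
\Delta_{0,N}\geq \Delta_{0,N-1}\cdot\frac{N}{N-1}\cdot\frac{(N-2)(N+1)}{N^2}=\Delta_{0,N-1}\cdot\frac{(N-2)(N+1)}{N(N-1)}.
\end{equation*}
I would then iterate from the base case $\Delta_{0,2}=2$ provided by Lemma~\ref{Delta2}. The resulting product telescopes: $\prod_{j=3}^{N}\frac{j-2}{j-1}=\frac{1}{N-1}$ and $\prod_{j=3}^{N}\frac{j+1}{j}=\frac{N+1}{3}$, so
\begin{equation*}
\Delta_{0,N}\geq 2\prod_{j=3}^{N}\frac{(j-2)(j+1)}{j(j-1)}=\frac{2(N+1)}{3(N-1)},
\end{equation*}
which is the claimed bound.

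There is no real obstacle here, since the hard work has been done in Lemma~\ref{MUNLEM}; the only small thing to watch is the arithmetic identity $N^2-N-2=(N-2)(N+1)$, which is exactly what makes the telescoping in Step~2 clean and yields the sharp-looking prefactor $\tfrac{2}{3}\tfrac{N+1}{N-1}$ rather than a weaker constant.
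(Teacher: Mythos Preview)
Your proof is correct and follows essentially the same approach as the paper: both deduce \eqref{GAMZERGAP0} directly from \eqref{SIMCAS} and Lemma~\ref{MUNLEM}, and then iterate \eqref{red1} from $\Delta_{0,2}=2$ using the factorization $1-\tfrac{1}{N}-\tfrac{2}{N^2}=\tfrac{(N-2)(N+1)}{N^2}$ to obtain a telescoping product. The only cosmetic difference is how the telescoping is grouped---the paper writes the factor as $\left(\tfrac{N+1}{N-1}\right)\left(\tfrac{N}{N-2}\right)^{-1}$---but the arithmetic and the conclusion are identical.
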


\begin{proof}
First, for $\gamma = 0$, we have from 
\eqref{SIMCAS} and Lemma~\ref{MUNLEM} that for $f$ orthogonal to the constants
$$
 {\mathcal G}_{0,N} (f,f) =  \langle f,(\one - P^{(0)})f\rangle  \geq  \left(1 - \tfrac1N - \tfrac{2}{N^2}\right)\|f\|_2^2 \ .
$$
This shows that $\Gamma_{0,N} \geq 1 - \frac1N - \tfrac{2}{N^2} >0$
for all $N\geq 3$. 
Next, 
\begin{align*}
    \frac{N}{N-1}\Gamma_{0,N} \geq \left(1 + \frac{1}{N-1}\right)\left(1 - \frac1N - \frac{2}{N^2}\right)
= 1 - \frac{2}{N(N-1)} = \left(\frac{N+1}{N-1}\right)\left(\frac{N}{N-2}\right)^{-1}\ .
\end{align*}
Then by \eqref{red1},
\begin{align*}
\Delta_{0,N} \geq \Delta_{0,2}\prod_{j=3}^N \left(\frac{j+1}{j-1}\right)\left(\frac{j}{j-2}\right)^{-1} = \frac{2}{3} \frac{N+1}{N-1}\ ,      
\end{align*}
which is the exact lower bound found in \cite{C08,GF08}. This lower bound can be shown to be sharp by displaying the corresponding eigenfunction; see \cite{C08}.
\end{proof}

We next illustrate the inductive method by giving a short proof of Sasada's result \cite{S15} that $\Delta_{1,N}$ is bounded below, uniformly in $N$, by a strictly positive constant. 
This simple proof keeps our paper self-contained, and we can proceed almost exactly as in the proof of Theorem~\ref{GAMZERGAP};
we only need an upper bound on $\langle f, P^{(1)}f\rangle$ for $f$ orthogonal to the constants. 

\begin{lm}\label{projL} For all $N$, all $0 \leq \gamma \leq 1$, and all $f \in \cH_N$ orthogonal to the constants, 
\begin{equation}\label{proj2}
\langle f, P^{(\gamma)}f\rangle  \leq  \left(\frac{N}{N-1}\right)^\gamma\langle f, P^{(0)}f\rangle \ .
\end{equation}
\end{lm}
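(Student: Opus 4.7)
The proof will be a short bound-in-norm calculation that does not actually use the orthogonality to constants; it rests on two trivial observations about the weights $w_N^{(\gamma)}(\eta_k) = \left(\frac{N-\eta_k}{N-1}\right)^\gamma$ appearing in the definition of $P^{(\gamma)}$.

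The plan is to first rewrite $\langle f, P^{(\gamma)} f\rangle$ in the quadratic form suggested by \eqref{pg2}. Since $w_N^{(\gamma)}(\eta_k)$ is $\sigma(\eta_k)$-measurable, the multiplication operator by $w_N^{(\gamma)}(\eta_k)$ commutes with the conditional expectation $P_k$ and both are self-adjoint, so
\begin{equation*}
\langle f, P^{(\gamma)} f\rangle = \frac1N \sum_{k=1}^N \langle f, w_N^{(\gamma)}(\eta_k) P_k f\rangle = \frac1N \sum_{k=1}^N \langle P_k f, w_N^{(\gamma)}(\eta_k) P_k f\rangle = \frac1N \sum_{k=1}^N \int_{\mathcal{S}_N} w_N^{(\gamma)}(\eta_k)\, |P_k f|^2\, \dd\sigma_N\ ,
\end{equation*}
using $P_k^2 = P_k$. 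The same manipulation with $\gamma=0$ gives $\langle f, P^{(0)} f\rangle = \frac1N \sum_k \int |P_k f|^2 \dd\sigma_N$.

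Next, observe that for $\boldsymbol\eta \in \mathcal{S}_N$ one has $\eta_k \geq 0$, hence $N-\eta_k \leq N$, so the pointwise inequality
\begin{equation*}
0 \leq w_N^{(\gamma)}(\eta_k) = \left(\frac{N-\eta_k}{N-1}\right)^\gamma \leq \left(\frac{N}{N-1}\right)^\gamma
\end{equation*}
holds on all of $\mathcal{S}_N$ (this is where $\gamma \geq 0$ is used). Plugging this into the above expression for $\langle f,P^{(\gamma)}f\rangle$ and comparing with the analogous expression for $\langle f, P^{(0)} f\rangle$ immediately gives \eqref{proj2}.

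There is essentially no obstacle: the claim is a pointwise-weight estimate combined with the elementary observation that $w_N^{(\gamma)}(\eta_k)$ commutes with $P_k$. The hypothesis of orthogonality to constants is not actually needed for the inequality — it is stated because the lemma is only applied to such $f$ in the sequel (where one combines it with an upper bound on the second eigenvalue of $P^{(0)}$ provided by Lemma~\ref{MUNLEM}).
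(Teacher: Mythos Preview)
Your proof is correct and is essentially identical to the paper's own argument: both use the pointwise bound $\left(\frac{N-\eta_k}{N-1}\right)^\gamma \le \left(\frac{N}{N-1}\right)^\gamma$ inside the quadratic-form expression \eqref{pg2} for $\langle f,P^{(\gamma)}f\rangle$. Your remark that orthogonality to constants is not actually needed is accurate.
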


\begin{proof} Using the point-wise upper bound
${\displaystyle \left(\frac{N - \eta_k}{N-1}\right) \leq \frac{N}{N-1}}$, we have
\begin{multline}
\langle f,P^{(\gamma)} f\rangle = \frac{1}{N}\sum_{k=1}^N  \int_{{\mathcal S}_N}\left(\frac{N - \eta_k}{N-1}\right)^\gamma |P_kf|^2\dd \sigma_N
\leq\\
\frac{1}{N}\sum_{k=1}^N  \int_{{\mathcal S}_N}\left(\frac{N}{N-1}\right)^\gamma |P_kf|^2\dd \sigma_N
= \left(\frac{N}{N-1}\right)^\gamma \langle f,P^{(0)}f\rangle\ .
\end{multline}
\end{proof}

\begin{thm}\label{GAMONEGAP} The spectral gap $\Gamma_{1,N}$ is strictly positive for all $N\geq 3$, and satisfies
\begin{equation}
\Gamma_{1,N} \geq \left(1 - \frac{1}{N-1} - \frac{2}{N(N-1)}\right)\ .
\end{equation} 
The spectral gap $\Delta_{1,N}$ is uniformly bounded below in $N\geq 3$. Specifically,
\begin{equation}
\Delta_{1,N} \geq 4\prod_{j=3}^\infty\left(1-\frac{3}{(j-1)^2}\right)  >0\ .
\end{equation}
\end{thm}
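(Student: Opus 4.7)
The plan is to repeat the $\gamma = 0$ argument from Theorem~\ref{GAMZERGAP}, using Lemma~\ref{projL} to transfer control of $P^{(0)}$ to $P^{(1)}$, and then feed the resulting estimate into the recursion \eqref{red1} starting from $\Delta_{1,2} = 4$ (Lemma~\ref{Delta2}).

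First, for $f \in \cH_N$ with $\|f\|_2 = 1$ and $\langle f, 1\rangle = 0$, the identity \eqref{SIMCAS} gives
$\mathcal{G}_{1,N}(f,f) = 1 - \langle f, P^{(1)}f\rangle$. Applying Lemma~\ref{projL} with $\gamma = 1$ and then Lemma~\ref{MUNLEM} yields
\begin{equation*}
\langle f, P^{(1)} f\rangle \;\leq\; \tfrac{N}{N-1}\langle f, P^{(0)} f\rangle \;\leq\; \tfrac{N}{N-1}\left(\tfrac{1}{N} + \tfrac{2}{N^2}\right) \;=\; \tfrac{1}{N-1} + \tfrac{2}{N(N-1)},
\end{equation*}
which is exactly the required lower bound for $\Gamma_{1,N}$. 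In particular $\Gamma_{1,N} > 0$ for all $N \geq 3$ since the right side is less than $1$ (trivially for $N \geq 3$).

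Second, I would multiply by the prefactor $\frac{N}{N-1}$ appearing in \eqref{red1} and observe the clean simplification
\begin{equation*}
\tfrac{N}{N-1}\,\Gamma_{1,N} \;\geq\; \tfrac{N}{N-1}\left(1 - \tfrac{1}{N-1} - \tfrac{2}{N(N-1)}\right) \;=\; \tfrac{N^2 - 2N - 2}{(N-1)^2} \;=\; 1 - \tfrac{3}{(N-1)^2}.
\end{equation*}
Iterating the recurrence \eqref{red1} from $\Delta_{1,2} = 4$ then produces
\begin{equation*}
\Delta_{1,N} \;\geq\; 4\prod_{j=3}^N \left(1 - \tfrac{3}{(j-1)^2}\right) \;\geq\; 4\prod_{j=3}^\infty \left(1 - \tfrac{3}{(j-1)^2}\right).
\end{equation*}
For $j \geq 3$ each factor lies in $[1/4, 1)$, and since $\sum_{j \geq 3} (j-1)^{-2}$ converges, the infinite product converges to a strictly positive limit.

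There is no real obstacle in this argument: everything reduces to algebra once Lemma~\ref{projL} is combined with Lemma~\ref{MUNLEM}. The only thing worth emphasizing is that the $\gamma = 1$ case happens to be exactly borderline — the bound on $\Gamma_{1,N}$ is of the form $1 - \frac{1}{N-1} - O(N^{-2})$ rather than $1 - \frac{1}{N} - O(N^{-2})$ as in Theorem~\ref{CONJMAIN2} — but this extra $\frac{1}{N-1}$ is precisely cancelled by the prefactor $\frac{N}{N-1}$ in \eqref{red1}, leaving a summable remainder $3(N-1)^{-2}$, which is what keeps the induction alive.
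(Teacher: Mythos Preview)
Your proof is correct and follows essentially the same route as the paper's: both combine \eqref{SIMCAS} with Lemma~\ref{projL} and Lemma~\ref{MUNLEM} to get the bound on $\Gamma_{1,N}$, then simplify $\tfrac{N}{N-1}\Gamma_{1,N} \geq 1 - \tfrac{3}{(N-1)^2}$ and iterate \eqref{red1} from $\Delta_{1,2}=4$. The algebra and the handling of the infinite product match the paper's argument step for step.
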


\begin{proof}
First, for $\gamma = 1$, we have from 
\eqref{SIMCAS} and Lemma~\ref{projL} that for $f$ orthogonal to the constants
$$
 {\mathcal G}_{1,N} (f,f) =  \langle f,(\one - P^{(1)})f\rangle  \geq  \left\langle f,\left(1 - \frac{N}{N-1}P^{(0)}\right)f\right\rangle\ .
$$
By Lemma~\ref{MUNLEM},
$$
\left\langle f,\left(1 - \frac{N}{N-1}P^{(0)}\right)f\right\rangle \geq 1 - \frac{N}{N-1}\left(\frac1N + \frac{2}{N^2}\right) = 1 -\frac{1}{N-1} - \frac{2}{N(N-1)}\ .
$$
This shows that $\Gamma_{1,N} \geq 1 -\frac{1}{N-1} - \frac{2}{N(N-1)}$ which is strictly positive for all $N\geq 3$.  

Next, 
\begin{equation*}
\frac{N}{N-1}\Gamma_{1,N} \geq \left(1 + \frac{1}{N-1}\right)\left(1 - \frac{1}{N-1} - \frac{2}{N(N-1)}\right)
= 1 - \frac{3}{(N-1)^2}\ .
\end{equation*}
Since $\sum_{N=3}^\infty \tfrac{3}{(N-1)^2} < \infty$,  $\prod_{j=3}^\infty\left(1-\tfrac{3}{(j-1)^2}\right) > 0$, and by  Lemma~\ref{Delta2}, $\Delta_{1,2} =4$. 
\end{proof}

As a consequence of Theorem~\ref{GAMONEGAP}, we can give a simple proof that for all $\gamma\in (0,1)$ and all $\N \geq 3$, $\Delta_{\gamma,N}>0$
which is all we really need from Lemma~\ref{CONJMAIN1}.
Since 
$\frac{N-\eta}{N-1} \leq \frac{N}{N-1}$,
\begin{equation*}
w^{(\gamma)}_N(\eta) = \left(\frac{N - \eta}{N-1}\right)^\gamma
\geq \left(\frac{N-1}{N}\right)^{1-\gamma}w^{(1)}_N(\eta),
\end{equation*}
it follows from the definition \eqref{find2} that for all trial functions $f$, 
$$
\mathcal{G}_{\gamma,N}(f,f) \geq \left(\frac{N-1}{N}\right)^{1-\gamma}
\mathcal{G}_{1,N}(f,f) \geq \left(1 - (1-\gamma)\frac{1}{N-1}\right)
\mathcal{G}_{1,N}(f,f)\ 
$$
where the final inequality is a simple convexity estimate that gives up nothing of consequence. 

Combining this with \eqref{GAMZERGAP0} then yields that for all $\gamma\in (0,1)$ and $N\geq 3$.
\begin{eqnarray*}
\Gamma_{\gamma,N} &\geq& \left(1 - (1-\gamma)\frac{1}{N-1})\right)
\left(1 - \frac{1}{N-1} - \frac{2}{N(N-1)}\right)\\
&>& 1 - \frac{2-\gamma}{N-1} - \frac{(1+\gamma) }{(N-1)^2}  = 1 - \frac{(2-\gamma)N -3}{(N-1)^2}> 0\ .
\end{eqnarray*}
In fact, this argument almost gives a proof of Lemma~\ref{CONJMAIN1}. It shows that for any $\gamma' < \gamma$, and all sufficiently large $N$ (depending on the choice of $\gamma'$), $\Gamma_{\gamma,N} \geq \left(\frac{N-1}{N}\right)^{\gamma'}$.
Therefore, a simple induction using \eqref{red1} yields $\Delta_{\gamma,N} \geq CN^{\gamma'-1}$ for a constant $C$ depending only on $\gamma$ and $\gamma'$. 

The previous argument is simple but somewhat crude, and one may hope that a more direct application of the inductive method
using a sharp lower bound on $W^{(\gamma)}$ would yield a significantly better bound on $\Delta_{\gamma,N}$. The next lemma provides the lower bound on $W^{(\gamma)}$.

\begin{lm}\label{weight} For all $N$, all $0 < \gamma \leq 1$, and for all $ \bset \in {\mathcal S}_N$,
\begin{equation}\label{wlb1}
\left(\frac{N-1}{N}\right)^{1-\gamma} \ \leq \ W^{(\gamma)}( \bset)\  \leq\  1\ .
\end{equation}
\end{lm}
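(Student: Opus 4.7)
The plan is to prove both bounds by exploiting the fact that, for $\gamma \in (0,1]$, the map $\eta \mapsto \left(\frac{N-\eta}{N-1}\right)^\gamma$ is concave on $[0,N]$ (a concave power composed with an affine function). This makes the whole sum $W^{(\gamma)}(\bset) = \frac{1}{N}\sum_k w^{(\gamma)}_N(\eta_k)$ a concave function on the simplex $\mathcal{S}_N$, so standard convex-analysis facts determine its extrema.

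For the upper bound, I will apply Jensen's inequality to the average over $k$: by concavity of $x \mapsto x^\gamma$,
\begin{equation*}
W^{(\gamma)}(\bset) = \frac{1}{N}\sum_{k=1}^N \left(\frac{N-\eta_k}{N-1}\right)^\gamma \leq \left(\frac{1}{N}\sum_{k=1}^N \frac{N-\eta_k}{N-1}\right)^\gamma.
\end{equation*}
Using the constraint $\sum_{k=1}^N \eta_k = N$, the inner average simplifies to $\frac{N \cdot N - N}{N(N-1)} = 1$, yielding $W^{(\gamma)}(\bset) \leq 1$.

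For the lower bound, since $W^{(\gamma)}$ is concave on the compact convex polytope $\mathcal{S}_N$, it attains its minimum at an extreme point. The extreme points of $\mathcal{S}_N$ are precisely the vectors obtained by putting all the energy on a single particle, i.e., one coordinate equal to $N$ and the rest equal to $0$. By symmetry it suffices to evaluate $W^{(\gamma)}$ at any such point; this gives
\begin{equation*}
W^{(\gamma)}(\bset) \geq \frac{1}{N}\left[\left(\frac{0}{N-1}\right)^\gamma + (N-1)\left(\frac{N}{N-1}\right)^\gamma\right] = \frac{N-1}{N}\left(\frac{N}{N-1}\right)^\gamma = \left(\frac{N-1}{N}\right)^{1-\gamma}.
\end{equation*}

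Neither step presents any real obstacle; the only thing that needs a brief justification is the claim that a concave function on a polytope attains its minimum at an extreme point, which follows from Carathéodory's/Krein–Milman-style reasoning: any $\bset \in \mathcal{S}_N$ is a convex combination of the extreme points $e_1 N, \dots, e_N N$, and concavity of $W^{(\gamma)}$ then gives $W^{(\gamma)}(\bset) \geq \min_k W^{(\gamma)}(Ne_k)$. The case $\gamma = 1$ (where $W^{(1)} \equiv 1$, consistent with both bounds collapsing to $1$) requires no separate treatment.
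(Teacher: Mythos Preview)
Your proof is correct and follows essentially the same approach as the paper: Jensen's inequality for the upper bound, and the fact that a concave function on the simplex attains its minimum at an extreme point for the lower bound. You add slightly more detail (the explicit evaluation at the extreme point and the one-line justification via convex combinations), but the argument is the same.
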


\begin{proof} Since
$ \frac{1}{N}\sum_{k=1}^N\eta_k =1$,
$ \frac{1}{N}\sum_{k=1}^N    \left(\frac{N - \eta_k}{N-1}\right) = 1$,
Jensen's inequality yields
$${ \frac{1}{N}\sum_{k=1}^N    \left(\frac{N - \eta_k}{N-1}\right)^\gamma \leq 
\left(\frac{1}{N}\sum_{k=1}^N\frac{N - \eta_k}{N-1}\right)^\gamma = 1\ .}$$

For the lower bound, since the function $ \bset \mapsto 
\frac{1}{N}\sum_{k=1}^N    \left(\frac{N - \eta_k}{N-1}\right)^\gamma$ is concave, it is minimized at an extreme point of ${\mathcal S}_N$. 
The extreme points $\bset$ are those for which $\eta_k = N$ for some $k$ with all other entries being zero. By symmetry,  the minimum occurs at $\bset = (N,0,\dots,0)$. 
\end{proof}

We are now ready to prove Lemma~\ref{CONJMAIN1}.

\begin{proof} [Proof of Lemma~\ref{CONJMAIN1}]
For all $f\in \cH_N$, $f$ orthogonal to the constants, by \eqref{difform}
\begin{equation}\label{difform2}
\mathcal{G}_{\gamma,N}(f,f)  = \int_{{\mathcal S}_N} W^{(\gamma)} f^2\dd \sigma_N
  - \langle f, P^{(\gamma)} f\rangle\ .
  \end{equation}
  Applying Lemma~\ref{projL}, Lemma~\ref{weight}
  and Lemma~\ref{MUNLEM} we get 
    \begin{eqnarray*}
 {\mathcal G}_{\gamma,N}(f,f)  &\geq& \left[\left(\frac{N-1}{N}\right)^{1-\gamma} -  \left(\frac{N}{N-1}\right)^{\gamma}\left(\frac{1}{N} +\frac{2}{N^2} \right)\right]\|f\|_2^2\\
 &=& \left(\frac{N}{N-1}\right)^{\gamma-2}\left[ \left(\frac{N}{N-1}\right) - \left(\frac{N}{N-1}\right)^2\left(\frac1N + \frac{2}{N^2}\right)\right]\|f\|_2^2\ .
  \end{eqnarray*}
  This shows that
    \begin{align*}
     & \Gamma_{\gamma,N} \geq   \left(\frac{N}{N-1}\right)^{\gamma-2}\left[ \left(\left(\frac{N}{N-1}\right) - \left(\frac{N}{N-1}\right)^2\frac1N\right) - 
  \left(\frac{N}{N-1}\right)^2 \frac{2}{N^2}\right]\\
  &= \left(\frac{N}{N-1}\right)^{\gamma-2}\left[ 1 - \frac{3}{(N-1)^2}\right]\ ,
  \end{align*}
which yields
\begin{align*}
 \Delta_{\gamma,N}  \geq \Delta_{\gamma,2} \prod_{j=3}^N  \left(\frac{j}{j-1}\right)^{\gamma-1}\left(1-\frac{3}{(j-1)^2}\right) \geq 
 \left(\frac{2}{N}\right)^{1-\gamma}\Delta_{\gamma,2} \prod_{j=3}^\infty\left(1-\frac{3}{(j-1)^2}\right) \ .
\end{align*}
This proves \eqref{SMNJK!2} because $\sum_{j=3}^\infty\frac{3}{(j-1)^2}<\infty$.
  \end{proof}
  
  Clearly, the weak point in the argument leading to Lemma~\ref{CONJMAIN1} is the use of the uniform lower bound on $W^{(\gamma)}$ provided by Lemma~\ref{weight}. The remedy to this is to show that  at least for large $N$, reasonable trial functions
  cannot concentrate much mass near the minimum of $W^{(\gamma)}$.

  This is facilitated by modifying the jump-rate function $w^{(\gamma)}_N$, replacing it with a  with a polynomial $m_N^{(\gamma)}$ such that $w^{(\gamma)}_N \geq m_N^{(\gamma)}$ everywhere. As we shall see, we get a sufficiently close fit using a quadratic polynomial.
  We then employ a trial function decomposition to show, using the precise calculations that we can make for the polynomial rates, that reasonable trial functions
  cannot concentrate much mass near the minimum of $W^{(\gamma)}$.  
The following simple lemma, repeated from \cite{CCL14}, is the starting point.

 \begin{lm}\label{comp1}
 For all $0 < \gamma < 1$ and all $x > -1$,
 \begin{equation}\label{com}
 (1+x)^ \gamma \geq 1 +  \gamma x - (1- \gamma)x^2\ .
 \end{equation}  
 \end{lm}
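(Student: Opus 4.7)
My plan is to study the function
\begin{equation*}
F(x) := (1+x)^\gamma - 1 - \gamma x + (1-\gamma)x^2, \qquad x \in (-1,\infty),
\end{equation*}
and show $F(x) \ge 0$ throughout. I begin by collecting the boundary and critical data: direct substitution gives $F(-1) = -1 + \gamma + (1-\gamma) = 0$ and $F(0) = 0$, while $F'(x) = \gamma(1+x)^{\gamma-1} - \gamma + 2(1-\gamma)x$ yields $F'(0) = 0$. So $0$ is a critical point and $-1$ is a second point where $F$ vanishes.

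Next I analyze the second derivative
\begin{equation*}
F''(x) = (1-\gamma)\bigl[\,2 - \gamma(1+x)^{\gamma-2}\bigr].
\end{equation*}
Since $2-\gamma > 0$, the map $x \mapsto (1+x)^{2-\gamma}$ is strictly increasing from $0$ to $\infty$ on $(-1,\infty)$, and the equation $(1+x)^{2-\gamma} = \gamma/2$ has a unique solution $x_0 = (\gamma/2)^{1/(2-\gamma)} - 1$, which lies in $(-1,0)$ because $\gamma/2 \in (0,1/2)$. Consequently $F'' < 0$ on $(-1,x_0)$ and $F'' > 0$ on $(x_0,\infty)$, so $F$ is concave on the former interval and convex on the latter.

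To conclude, I handle the two regions separately. On $[x_0,\infty)$, $F$ is convex with critical point $x=0$ where $F(0)=0$, so $F \ge 0$ holds throughout this interval; in particular $F(x_0) \ge 0$. On $(-1, x_0]$, $F$ is concave, with $F(-1) = 0$ and $F(x_0) \ge 0$ at the endpoints. Writing any $x \in (-1,x_0]$ as $x = \lambda(-1) + (1-\lambda)x_0$ with $\lambda \in [0,1)$, concavity gives
\begin{equation*}
F(x) \ge \lambda F(-1) + (1-\lambda) F(x_0) = (1-\lambda) F(x_0) \ge 0,
\end{equation*}
completing the proof.

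The only mildly delicate point is that $F''$ blows up as $x \to -1^+$, so one cannot argue by convexity on all of $(-1,\infty)$. The remedy is to exploit the exact boundary cancellation $F(-1) = 0$ and combine a convexity argument on $[x_0,\infty)$ with a chord argument on $(-1,x_0]$; both follow from elementary calculus, so there is no substantive obstacle.
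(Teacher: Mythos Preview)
Your proof is correct and follows essentially the same approach as the paper: both define the difference function, locate the unique inflection point $x_0=(\gamma/2)^{1/(2-\gamma)}-1\in(-1,0)$ via the second derivative, use convexity on $[x_0,\infty)$ together with the critical point at $0$ to get nonnegativity there, and then use concavity on $(-1,x_0]$ with the endpoint values. Your ordering (convex region first to secure $F(x_0)\ge 0$, then the concave chord argument) is slightly cleaner than the paper's presentation, but the argument is the same.
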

 
 \medskip
 
 \begin{proof}Let $\eta:[-1,\infty)\to \R$ be defined by
 $\eta(x) := (1+x)^ \gamma - ( 1 +  \gamma x - (1- \gamma)x^2)$.
 Note that
 $$ \eta''(x) = (1- \gamma)(2 -  \gamma(1+x)^{ \gamma-2})\ .$$
 Thus, $\eta''(x) =0$ has the single solution $x = x_*$ where
 $ x_*  := ( \gamma/2)^{1/(2- \gamma)} -1$.
 Note that $\eta$ is convex on $(x_*,\infty)$, and concave on $(-1, x_*]$, and also that
  $-1 < x_* < 0$. 
 
 Since $\eta$ is concave on $[-1,x_*]$, 
 $$\min\{ \eta(x)\ :\-1 \leq x \leq x_*\ \} = \min\{\eta(-1),\eta(x_*)\} = \eta(-1) = 0\ .$$
 Since  $\eta$ is convex  on $(x_*,\infty)$, and this interval contains a point, namely $0$,
 at which $\eta'$ vanishes, the minimum of $\eta$ over this interval is attained at $x=0$, and 
 thus $\eta$ is non-negative on $(x_*,\infty)$ as well as on  $[-1,x_*]$.
\end{proof}

Lemma \ref{comp1}  gives us the lower bound
\begin{equation}\label{WELOBND}
w^{(\gamma)}_N(\eta ) \geq m_N^{(\gamma)}(\eta) := 1 +\gamma \left(\frac{1 - \eta}{N-1}\right) 
-(1-\gamma)\left(\frac{1 - \eta}{N-1}\right)^2\ .
\end{equation}
Note that 
$$m_N^{(\gamma)}(\eta) = \left[1-(1-\gamma)\left(\frac{1 - \eta}{N-1}\right) \right]\left[ \left(\frac{1 - \eta}{N-1}\right)  +1\right] > 0\ .$$
Then
  \begin{equation}\label{wabnd1B}
  \mathcal{G}_{\gamma,N}(f,f)\geq  \frac{1}{N} \sum_{k=1}^N\left[   \int_{\ST} m_N^{(\gamma)}(\eta_k)[f - P_kf]^2\dd \sigma_N
 \right]  := \widetilde{\mathcal{G}}_{\gamma,N}(f,f)\ .
 \end{equation}
 The following notation will be convenient. 
\begin{align}
 &\widetilde{ W}^{({\gamma})}(\boldsymbol{\eta}) := \frac1N\sum_{k=1}^N m_N^{(\gamma)}(\eta_k) =  1 - \frac{1-\gamma}{(N-1)^2}\frac1N  \sum_{k=1}^N (\eta_k^2 -1) \geq 1 -  \frac{1-\gamma}{N-1}\ ,\label{wabnd1}\\
 &\widetilde{ P}^{(\gamma)} := \frac1N\sum_{k=1}^N m_N^{(\gamma)}(\eta_k) P_k \label{wabnd2}\ ,\\ 
&\Dto (f,f) := \int_{\ST}  \widetilde{ W}^{({\gamma})} f^2 {\rm d}\sigma_N - \langle f, \widetilde{P}^{(\gamma)} f\rangle\label{Dtodef}\ .
\end{align}

\begin{remark}\label{GTREM}
Because of \eqref{wabnd1B}, any spectral gap bound for the Dirichlet form $\Dto$ serves as a spectral gap bound for the 
Dirichlet form ${\mathcal G}_{\gamma,N}$.
\end{remark}

\section{The trial function decomposition}\label{sec:sgh}

We begin by specifying a trial function decomposition that we shall use. Let ${\mathcal A}_N$ be the subspace of $\cH_N$ specified in Definition~\ref{ANDEFN}. 
For any $f\in \cH_N$ orthogonal to the constants, define $p$ and $h$ to be the orthogonal projections of $f$ onto ${\mathcal A}_N$ and ${\mathcal A}_N^\perp$ respectively. 
Then since $1\in {\mathcal A}_N^\perp$, $p$ is orthogonal to the constants, and then $h =f- p$ is orthogonal to the constants. The following lemma motivates this decomposition. Throughout this section we work with  $\widetilde{ W}^{({\gamma})}(\boldsymbol{\eta}) $ and $ \widetilde{ P}^{(\gamma)}$ as defined in \eqref{wabnd1} and \eqref{wabnd2}.

\begin{lm}\label{nullalpha}  For all $\gamma \in [0,1]$ the null space of $\widetilde{P}^{(\gamma)}$ is the intersection of the null spaces of $P_k$ for $k=1,\dots N$.
For all $\gamma \in [0,1]$, the closure of the range of $\widetilde{P}^{(\gamma)}$ is the subspace 
${\mathcal A}_N$  of $\cH_N$
given in Definition~\ref{ANDEFN}. 
\end{lm}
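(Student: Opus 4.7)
The plan is to exploit the fact that $\widetilde{P}^{(\gamma)}$ is non-negative and self-adjoint, which lets me handle both assertions simultaneously. Each summand $\frac{1}{N}m_N^{(\gamma)}(\eta_k)P_k$ is self-adjoint because multiplication by $m_N^{(\gamma)}(\eta_k)$ commutes with $P_k$ (both depend on $\eta_k$ alone), and non-negative because
\begin{equation*}
\langle f, m_N^{(\gamma)}(\eta_k)P_k f\rangle \;=\; \mathbb{E}\bigl[m_N^{(\gamma)}(\eta_k)(P_k f)^2\bigr].
\end{equation*}
The factorization $m_N^{(\gamma)}(\eta)=\bigl[1-(1-\gamma)\tfrac{1-\eta}{N-1}\bigr]\tfrac{N-\eta}{N-1}$ displayed just after \eqref{WELOBND} shows $m_N^{(\gamma)}>0$ on $[0,N)$, hence $\nu_N$-almost everywhere.

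For the null space, if $\widetilde{P}^{(\gamma)}f=0$ then $\langle f,\widetilde{P}^{(\gamma)}f\rangle=\frac{1}{N}\sum_k\mathbb{E}[m_N^{(\gamma)}(\eta_k)(P_k f)^2]=0$, and strict positivity of $m_N^{(\gamma)}$ a.e.\ forces $P_k f=0$ for each $k$; the reverse inclusion is immediate from the formula defining $\widetilde{P}^{(\gamma)}$, giving $\ker\widetilde{P}^{(\gamma)}=\bigcap_{k}\ker P_k$. For the range, self-adjointness yields
\begin{equation*}
\overline{\mathrm{Range}\,\widetilde{P}^{(\gamma)}}\;=\;(\ker\widetilde{P}^{(\gamma)})^{\perp}\;=\;\overline{\textstyle\sum_k\mathrm{Range}\,P_k}\,,
\end{equation*}
and since the range of $P_k$ is the closed subspace of square-integrable functions of $\eta_k$ alone, this sum consists of all functions representable as $\sum_j\varphi_j(\eta_j)$, which, after absorbing means to match the normalization of Definition~\ref{ANDEFN}, is $\mathcal{A}_N$.

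The one point requiring more than a one-line justification is that the sum $\sum_k\mathrm{Range}\,P_k$ is already closed in $\mathcal{H}_N$, so that the closure on the right really is $\mathcal{A}_N$ and not a proper enlargement. This is precisely what Theorem~\ref{CHAOS} provides: via the identification of Remark~\ref{SUMREM}, it exhibits a bounded linear bijection with bounded inverse between $\mathcal{A}_N$ and the closed subspace $\mathcal{B}_N$ of $\bigoplus^N\mathcal{K}_N$, so $\mathcal{A}_N$ is topologically a closed subspace of $\mathcal{H}_N$. This closedness is the main obstacle in the argument, and the preceding section has already handled it.
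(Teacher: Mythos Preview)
Your argument is correct and follows the same route as the paper's: use non-negativity of $\widetilde{P}^{(\gamma)}$ together with the a.e.\ strict positivity of $m_N^{(\gamma)}$ to show $\ker\widetilde{P}^{(\gamma)}=\bigcap_k\ker P_k$, then invoke self-adjointness to identify the closure of the range with the orthogonal complement of this common null space, which is independent of $\gamma$ and coincides with the closure of the range of $P^{(0)}$. The paper simply declares it ``evident'' that this is $\mathcal{A}_N$; your final paragraph, which invokes Theorem~\ref{CHAOS} and Remark~\ref{SUMREM} to verify that $\mathcal{A}_N$ is already closed, supplies a point of rigor the paper leaves implicit.
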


\begin{proof} Since  $\widetilde{ P}^{(\gamma)}\geq 0$,  $f$ belongs to the null space of $\widetilde{ P}^{(\gamma)}$ if and only if 
$\langle f, \widetilde{ P}^{(\gamma)} f \rangle = 0$.
Because $m_N^{(\gamma)}(\eta_k)    > 0$ almost everywhere 
  $${ 0 =  \langle f,  \widetilde{ P}^{(\gamma)}f\rangle
  = \frac{1}{N}\sum_{k=1}^N\int_{{\mathcal S}_N}m_N^{(\gamma)}(\eta_k)    |P_k f|^2\dd \sigma_N}\ ,$$
it must be the case that $|P_k f|^2$ vanishes identically.  Thus, if $f$ is in the null space of $\widetilde{ P}^{(\gamma)}$, 
then $P_k f = 0$ for each $k$, and $f$ is in the null space of 
$P^{(0)}$. Conversely if $f$ is in the null space of $P^{(0)}$, then $P_k f = 0$ for each $k$, and then clearly $\widetilde{ P}^{(\gamma)}f = 0$. 

Since $\widetilde{ P}^{(\gamma)}\geq 0$, the closure of its range is the orthogonal complement of its null space. Thus the null space does not depend on $\gamma$, neither does the range. Evidently, ${\mathcal A}_N$ is the closure of the range of $P^{(0)}$. 
\end{proof}

Let $f = p+h$ be the trial function decomposition described at the beginning of this section. By Lemma~\ref{nullalpha},  
$h$ is  the component of $f$ in the null space of  $ \widetilde{ P}^{(\gamma)}$ for each $\gamma\in [0,1]$, and hence 
\begin{equation}\label{red2Y}
\langle f, \widetilde{ P}^{(\gamma)} f \rangle  = \langle p, \widetilde{ P}^{(\gamma)}p \rangle\ .
\end{equation}
Using this in \eqref{Dtodef}  yields

\begin{equation}\label{red2X} \widetilde{ {\mathcal G}}_{\gamma,N}(f,f) = 
  \int_{\ST} \widetilde{ W}^{({\gamma})} f^2\dd \sigma_N
  - \langle p, \widetilde{ P}^{({\gamma})} p\rangle \ .
 \end{equation}
  
  Since $p\in {\mathcal A}_N$,  it has a canonical decomposition of Lemma~\ref{CANREP}
\begin{equation}\label{struc}
p(\bset) = \sum_{j=1}^N\rho_j(\eta_j)\ 
\end{equation}
where  
\begin{equation}\label{struc2}
\sum_{j=1}^N{\mathbb E}[\rho_j(\eta_j)(1-\eta_j)] =0\quad{\rm and\ for\ each} \  j, \quad {\mathbb E}[\rho_j(\eta_j)] =0\ .
\end{equation}

We make a further decomposition of $\rho_j(\eta_j)$ as follows: 

\begin{defi}\label{decompdef}  Let  $p$ be  a function given by a sum of the form \eqref{struc} subject to the constraints \eqref{struc2}.
Expand each $\rho_j$ in the orthonormal basis $\{\phi_n\}_{n\geq 0}$ of eigenfunctions of $K$.  
Define
\begin{equation}\label{sDEF}
s := \sum_{j=1}^N\langle \phi_1,\rho_j\rangle \phi_1(\eta_j) \quad{\rm and\ then}\quad g := p -s=\sum_{j=1}^N\sum_{n=2}^\infty\langle \phi_n,\rho_j\rangle \phi_n(\eta_j)\ .
\end{equation}
To simplify notation we define
\begin{equation}\label{sDEFB}
\alpha_j := \langle \phi_1,\rho_j\rangle \ ,\quad  \psi_j(\eta_j) := \alpha_j\phi_1(\eta_j) \quad{\rm and}\quad \varphi_j(\eta_j) := 
\sum_{n=2}^\infty\langle \phi_n,\rho_j\rangle \phi_n(\eta_j)\ .
\end{equation}

The {\em trial function decomposition} of any $f\in \cH_N$ that is orthogonal to the constants is given by
\begin{equation}\label{pdecomp}
f
=p+h
 =  s+g + h
=\sum_{j=1}^N\alpha_j\phi_1(\eta_j)+\sum_{j=1}^N\sum_{n=2}^\infty\langle \phi_n,\rho_j\rangle \phi_n(\eta_j)+h
=\sum_{j=1}^N\psi_j+\sum_{j=1}^N\varphi_j+h
\end{equation}
where $p$ is the component of $f$ in the closure of the range of $P^{(\gamma)}$, $h$ is the component of $f$ in the null space of $P^{(0)}$, and 
$p =s+g$ is the decomposition of $p$ defined in \eqref{sDEF}.
\end{defi}

The next lemma gives a crucial upper bound on $\langle g, P^{(\gamma)} g\rangle$. In proving it, we make use of Theorem~\ref{KOPSPEC}
which has the consequence that for each $j$, 
\begin{align}\label{eq:key}
    \|K\varphi_j\|_2 \leq \kappa_2\|\varphi_j\|_2 = \frac{2}{N(N+1)}\|\varphi_j\|_2\ .
\end{align}
Had we not split off the $\phi_1$ component of $\rho_j$, we would have had to use $\kappa_1$ in place of $\kappa_2$ in this estimate, which would degrade it by a factor of $N$.  This is the reason for splitting $p =s+g$. Since $s$ is affine, we can treat it separately by other means, as we shall see. 

At this point is worth remarking that if the trial function $f$ is symmetric under coordinate permutations, then $s=0$ because $\sum_{j=1}^N \alpha_j =0$
and $\alpha_j$ is independent of $j$.  This was taken advantage of   in \cite{CCL14} to ``skip over''  all of the estimates on $s$, and produce a sharper bound on the spectral gap for symmetric $f$, the case relevant in kinetic theory. Here we do not keep track of constants, but we do estimate the contribution form $s$ so that there is no symmetry restriction in our spectral gap bound. 

\begin{lm}\label{gPUB}  Let $f\in \cH_N$ be orthogonal to the constants and let $f = s+g + h$ be its trial function decomposition \eqref{pdecomp},
so that $g$ has the form
$g = \sum_{j=1}^N\varphi_j(\eta_j)$. Then there is a constant $C$ independent of $N$ and $f$  such that 
\begin{equation}\label{gPUB1}
\langle g, \widetilde{  P}^{(\gamma)} g\rangle \leq \frac1N \sum_{k=1}^N \int_{\ST} m_N^{(\gamma)}(\eta_k) |\varphi_k(\eta_k)|^2{\rm d}\sigma_N  + \frac{C}{N^2}\|g\|_2^2\ .
\end{equation}
\end{lm}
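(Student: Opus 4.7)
\textbf{Proof plan for Lemma~\ref{gPUB}.} The plan is to expand the quadratic form $\langle g,\widetilde P^{(\gamma)}g\rangle$ term by term and use the fact that $\varphi_j$ is orthogonal to both $\phi_0$ and $\phi_1$, so that the correlation operator $K$ acts on $\varphi_j$ by a factor of size $\kappa_2=O(1/N^2)$ rather than $\kappa_1=O(1/N)$.

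First, since $m_N^{(\gamma)}(\eta_k)$ is a function of $\eta_k$ alone, it commutes with the conditional expectation $P_k$, so
\begin{equation*}
\langle g,\widetilde P^{(\gamma)}g\rangle=\frac{1}{N}\sum_{k=1}^N\int_{\ST}m_N^{(\gamma)}(\eta_k)\,|P_k g|^2\,\dd\sigma_N .
\end{equation*}
Next, using $g=\sum_j\varphi_j(\eta_j)$ together with $P_k\varphi_k(\eta_k)=\varphi_k(\eta_k)$ and $P_k\varphi_j(\eta_j)=(K\varphi_j)(\eta_k)$ for $j\neq k$ (by the identity \eqref{PKREL} and symmetry), I obtain $P_kg=\varphi_k(\eta_k)+R_k$ where $R_k:=\sum_{j\neq k}(K\varphi_j)(\eta_k)$. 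Expanding $|P_kg|^2=|\varphi_k|^2+2\varphi_k R_k+R_k^2$ gives the target diagonal term plus two error terms.

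For the cross term, since both $m_N^{(\gamma)}(\eta_k)\varphi_k(\eta_k)$ and $(K\varphi_j)(\eta_k)$ depend only on $\eta_k$, I pass to the $\K_N$ inner product and apply Cauchy--Schwarz together with the uniform bound $|m_N^{(\gamma)}|\leq 2$ and the spectral estimate \eqref{eq:key}, which gives $\|K\varphi_j\|_2\leq \kappa_2\|\varphi_j\|_2$ with $\kappa_2=\tfrac{2}{N(N-1)}$. This yields
\begin{equation*}
\bigl|\langle m_N^{(\gamma)}\varphi_k,K\varphi_j\rangle_{\K_N}\bigr|\leq \frac{C}{N^2}\|\varphi_k\|_2\|\varphi_j\|_2,
\end{equation*}
and summing over $k$ and $j\neq k$ using $(\sum_j\|\varphi_j\|_2)^2\leq N\sum_j\|\varphi_j\|_2^2$ (Cauchy--Schwarz over the $N$ indices) and Theorem~\ref{CHAOS} to compare $\sum_j\|\varphi_j\|_2^2$ with $\|g\|_2^2$ produces the bound $\tfrac{C}{N^2}\|g\|_2^2$. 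For the $R_k^2$ term, Cauchy--Schwarz inside the sum defining $R_k$ gives $\|R_k\|_2^2\leq (N-1)\kappa_2^2\sum_{j\neq k}\|\varphi_j\|_2^2$; summing over $k$ and using $m_N^{(\gamma)}\leq 2$ yields a contribution of order $\tfrac{1}{N^3}\|g\|_2^2$, which is absorbed into the $\tfrac{C}{N^2}\|g\|_2^2$ remainder.

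The only delicate point is the gain from $\kappa_2$ versus $\kappa_1$: the cross term sums over roughly $N^2$ pairs $(k,j)$ with $j\neq k$, each of size $\tfrac{1}{N^2}\|\varphi_k\|_2\|\varphi_j\|_2$, and the crude $\ell^1$--$\ell^2$ inequality loses a factor of $N$, leaving $\tfrac{1}{N}\|g\|_2^2$ -- which would be fatal. Recovering the required $\tfrac{1}{N^2}$ is exactly what the spectral gain $\kappa_2=O(1/N^2)$ provides, and this is the reason Definition~\ref{decompdef} splits off the $\phi_1$ component $s$ before estimating $g$. The quantitative closeness $\sum_j\|\varphi_j\|_2^2\leq (1+O(1/N))\|g\|_2^2$ from Theorem~\ref{CHAOS} is what makes the remainder $O(1/N^2)\|g\|_2^2$ rather than $O(1/N)\|g\|_2^2$.
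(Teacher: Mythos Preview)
Your proof is correct and follows essentially the same route as the paper's: expand $P_kg=\varphi_k+\sum_{j\neq k}K\varphi_j$, square, bound $m_N^{(\gamma)}$ pointwise, use $\|K\varphi_j\|_2\le\kappa_2\|\varphi_j\|_2$ on the off-diagonal pieces, and finish with Theorem~\ref{CHAOS}. The only differences are cosmetic bookkeeping (you use $(\sum_j\|\varphi_j\|_2)^2\le N\sum_j\|\varphi_j\|_2^2$ where the paper uses $ab\le\tfrac12(a^2+b^2)$), and note that your closing remark overstates the role of the $1+O(1/N)$ constant in Theorem~\ref{CHAOS}: here only the uniform bound $\sum_j\|\varphi_j\|_2^2\le C\|g\|_2^2$ is needed.
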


\begin{proof} Since $P_k g = \varphi_k(\eta_k) +\sum_{j\colon j\neq k}K\varphi_j(\eta_k)$,
\begin{eqnarray}
\langle g, P^{(\gamma)} g\rangle &=& \frac1N \sum_{k=1}^N \int_{\ST} m_N^{(\gamma)}(\eta_k) \left(\varphi_k(\eta_k) +\sum_{j\colon j\neq k}K\varphi_j(\eta_k)\right)^2{\rm d}\sigma_N\nonumber\\
&=& \frac1N \sum_{k=1}^N \int_{\ST} m_N^{(\gamma)}(\eta_k) |\varphi_k(\eta_k)|^2{\rm d}\sigma_N\nonumber\\
&+&  \frac2N \sum_{k=1}^N\sum_{j\colon j\neq k} \int_{\ST} m_N^{(\gamma)}(\eta_k)\varphi_k(\eta_k)K\varphi_j(\eta_k){\rm d}\sigma_N\label{gPUB3}\\
&+& \frac1N \sum_{k=1}^N\sum_{j\colon j\neq k}\sum_{\ell\colon\ell\neq k} 
 \int_{\ST} m_N^{(\gamma)}(\eta_k)K\varphi_j(\eta_k)K\varphi_\ell(\eta_k){\rm d}\sigma_N\label{gPUB4}
\end{eqnarray}
 Using the pointwise  bound $m_N^{(\gamma)}(\eta_k) \leq w_N^{(\gamma)}(\eta_k) \leq \left(\frac{N}{N-1}\right)^\gamma$ and \eqref{eq:key}, for $j\neq k$ ,
$$
\left| \int_{\ST} \left(\frac{N-\eta_k}{N-1}\right)^\gamma\varphi_k(\eta_k)K\varphi_j(\eta_k){\rm d}\sigma_N\right| \leq 
\left(\frac{N}{N-1}\right)^\gamma \kappa_2\|\varphi_j\|_2\|\varphi_k\|_2\ .
$$
Therefore, the expression in line \eqref{gPUB3} is bounded above by $ \left(\frac{N}{N-1}\right)^{\gamma -1}\kappa_2 \sum_{k=1}^N\|\varphi_k\|_2^2$, and then by \eqref{Kpoly2} and Theorem~\ref{CHAOS}, this is bounded above by $\frac{C}{N^2}\|g\|_2^2$.  In the same way, one shows that the sum of the integrals 
in \eqref{gPUB4} is bounded above by $\frac{C}{N^3}\|g\|_2^2$.
\end{proof}

The components of $s$, which lie in the eigenspace of $K$ with eigenvalue $\kappa_1=-\frac{1}{N-1}$, are more correlated with one another than the components of $g$, but $s$ has other nice properties given in the next two lemmas.

\begin{lm}\label{SEIG} Let $f\in \cH_N$ be orthogonal to the constant and let $f = s+g + h$ be its trial function decomposition \eqref{pdecomp}, so that $s$ has the form 
$s = \sum_{j=1}^N\alpha_j\phi_1(\eta_j)=\sum_{j=1}^N\psi_j(\eta_j)$ with $\sum_{j=1}^N \alpha_j = 0$. 
 Then for each $k$
\begin{equation}\label{SEIG1}
P_k s= \frac{N}{N-1}\alpha_k\phi_1(\eta_k)=\frac{N}{N-1}\psi_k(\eta_k)\ .
\end{equation}
\end{lm}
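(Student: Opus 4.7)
The plan is to compute $P_k s$ directly by applying $P_k = \mathbb{E}[\cdot\,|\,\eta_k]$ term by term to the sum $s = \sum_{j=1}^N \alpha_j \phi_1(\eta_j)$, and then to use the eigenvalue identity for $K$ together with the constraint $\sum_j \alpha_j = 0$ to obtain the stated multiplier $\tfrac{N}{N-1}$.

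By linearity, $P_k s = \sum_{j=1}^N \alpha_j\, \mathbb{E}[\phi_1(\eta_j)\,|\,\eta_k]$. For $j=k$, the term is simply $\alpha_k \phi_1(\eta_k)$, since $\phi_1(\eta_k)$ is $\sigma(\eta_k)$-measurable. For $j \neq k$, the relation \eqref{PKREL} together with the symmetry in the coordinates gives $\mathbb{E}[\phi_1(\eta_j)\,|\,\eta_k] = K\phi_1(\eta_k)$, which by Theorem~\ref{KOPSPEC} and \eqref{Kpoly2} equals $\kappa_1 \phi_1(\eta_k) = -\tfrac{1}{N-1}\phi_1(\eta_k)$.

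Assembling the pieces,
\begin{equation*}
P_k s = \alpha_k \phi_1(\eta_k) - \frac{\phi_1(\eta_k)}{N-1}\sum_{j\neq k}\alpha_j.
\end{equation*}
Now invoke the constraint $\sum_{j=1}^N \alpha_j = 0$ (which is \eqref{ANDEF3}, inherited from the canonical representation), so $\sum_{j\neq k}\alpha_j = -\alpha_k$. Substituting gives
\begin{equation*}
P_k s = \alpha_k \phi_1(\eta_k)\left(1 + \frac{1}{N-1}\right) = \frac{N}{N-1}\,\alpha_k\phi_1(\eta_k) = \frac{N}{N-1}\psi_k(\eta_k),
\end{equation*}
which is \eqref{SEIG1}.

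There is no real obstacle here — the computation is a two-line consequence of $\phi_1$ being the eigenvector of $K$ with eigenvalue $-\tfrac{1}{N-1}$ and the affine constraint $\sum_j \alpha_j = 0$ built into the canonical representation. The key conceptual point (rather than technical difficulty) is recognizing that the zero-sum condition on the $\alpha_j$ precisely converts the off-diagonal correlation $\kappa_1 = -\tfrac{1}{N-1}$ into an amplification of the diagonal term by $\tfrac{N}{N-1}$, which is why splitting off $s$ from $p$ in Definition~\ref{decompdef} is useful: the function $s$ behaves almost like an eigenvector of $\widetilde{P}^{(\gamma)}$ under $P_k$.
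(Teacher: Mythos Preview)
Your proof is correct and follows essentially the same argument as the paper: apply $P_k$ term by term, use that $\phi_1$ is an eigenfunction of $K$ with eigenvalue $\kappa_1=-\tfrac{1}{N-1}$ for the off-diagonal terms, and invoke $\sum_j\alpha_j=0$ to collapse the sum. The only difference is that you spell out the references to \eqref{PKREL}, Theorem~\ref{KOPSPEC}, and \eqref{ANDEF3} explicitly, whereas the paper states the key identity $P_k\phi_1(\eta_j)=-\tfrac{1}{N-1}\phi_1(\eta_k)$ without citation.
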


\begin{proof} Recall from Definition~\ref{KOPDEF}
that for $1\leq k \leq N$,  $\pi_k:\mathcal {S}_{N} \to \R$ is defined by
$\pi_k(\boldsymbol{\eta}) = \eta_k$. For $j\neq k$, $P_k (\phi_1\circ \pi_j)  = -\frac{1}{N-1}\phi_1\circ \pi_k$. Therefore,
$$
P_k s   =\alpha_k\phi_1(\eta_k)  -  \sum_{k\neq j} \frac{1}{N-1}\alpha_j\phi_1(\eta_k) = \frac{N}{N-1}\alpha_k \phi_1(\eta_k)\ 
$$
where in the last equality we have used $\sum_{j=1}^N \alpha_j = 0$. 
\end{proof}

As a consequence of Lemma~\ref{SEIG}, we obtain the analog of \eqref{gPUB1}, as an identity instead of as an inequality:

\begin{equation}
\label{sPUB1}
    \begin{split}
\langle s, \widetilde{P}^{(\gamma)}s\rangle &=\frac1N \sum_{k=1}^N \int_{\ST}  m_N^{(\gamma)}(\eta_k) | P_ks|^2\, {\rm d}\sigma_N\\
    &= \left(\frac{N}{N-1}\right)^2   \frac1N \sum_{k=1}^N \int_{\ST}  m_N^{(\gamma)}(\eta_k) |\alpha_k\phi_1(\eta_k)|^2 {\rm d}\sigma_N.
    \end{split}
\end{equation}

\begin{lm}\label{sL4B}  Let $f\in \cH_N$ be orthogonal to the constant and let $f = s+g + h$ be its trial function decomposition \eqref{pdecomp}, so that $s$ has the form 
$s = \sum_{j=1}^N\alpha_j\phi_1(\eta_j)$ with $\sum_{j=1}^N \alpha_j = 0$.  Then there is a constant $C$ independent of $N$ and $f$ such that
\begin{equation}\label{sL4B1}
\|s\|_4 \leq CN^{1/2}\|s\|_2\ .
\end{equation}
\end{lm}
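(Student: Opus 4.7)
The plan is to apply Cauchy--Schwarz pointwise to separate the coefficients $\alpha_j$ from the random variables $\phi_1(\eta_j)$, bound the resulting second moment of $\sum_j \phi_1(\eta_j)^2$ using Lemma~\ref{SUPBND1}, and then invoke Theorem~\ref{CHAOS} to pass from $\sum_j \alpha_j^2$ back to $\|s\|_2^2$. First, for every $\bset \in \mathcal{S}_N$ the Cauchy--Schwarz inequality gives
\begin{equation*}
s(\bset)^2 = \Big(\sum_{j=1}^N \alpha_j\phi_1(\eta_j)\Big)^{\!2} \leq \Big(\sum_{j=1}^N \alpha_j^2\Big)\Big(\sum_{j=1}^N \phi_1(\eta_j)^2\Big),
\end{equation*}
and squaring this and integrating against $\sigma_N$ yields
\begin{equation*}
\|s\|_4^4 \leq \Big(\sum_{j=1}^N \alpha_j^2\Big)^{\!2} \int_{\mathcal{S}_N} \Big(\sum_{j=1}^N \phi_1(\eta_j)^2\Big)^{\!2}\,\dd\sigma_N.
\end{equation*}

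By symmetry in the coordinates the integral on the right equals $N\,\mathbb{E}[\phi_1(\eta_1)^4] + N(N-1)\,\mathbb{E}[\phi_1(\eta_1)^2\phi_1(\eta_2)^2]$. Since $\phi_1(\eta) = \sqrt{(N+1)/(N-1)}\,(\eta-1)$, each of these two expectations is a linear combination of moments $\mathbb{E}[\eta_1^a\eta_2^b]$ with $a,b \leq 4$ and $a+b \leq 4$, with coefficients of order $O(1)$ uniformly in $N$. Iterating the conditional-moment bounds of Lemma~\ref{SUPBND1} (for instance, $\mathbb{E}[\eta_1^2\eta_2^2] \leq 3\,\mathbb{E}[\eta_1^2] \leq 9$, and similarly $\mathbb{E}[\eta_1^4] \leq 24$, with lower moments controlled by Cauchy--Schwarz from higher ones) shows that all such moments are bounded uniformly in $N$. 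Hence the integral is at most $CN^2$ for an absolute constant $C$.

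Finally, set $\psi_j(\eta_j) := \alpha_j\phi_1(\eta_j)$, for which $\|\psi_j\|_2^2 = \alpha_j^2$ and $\mathbb{E}[\psi_j(\eta_j)] = 0$, while the hypothesis $\sum_j \alpha_j = 0$ gives $\sum_j \mathbb{E}[\psi_j(\eta_j)\phi_1(\eta_j)] = \sum_j \alpha_j = 0$. Thus $(\psi_1,\dots,\psi_N)$ is precisely the canonical representation of $s$ in the sense of Lemma~\ref{CANREP}, so Theorem~\ref{CHAOS} yields $\sum_j \alpha_j^2 \leq \frac{N}{N-2}\|s\|_2^2 \leq 3\|s\|_2^2$ for $N\geq 3$. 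Combining the two estimates produces $\|s\|_4^4 \leq C'N^2\|s\|_2^4$, which after taking fourth roots is \eqref{sL4B1}. The argument is essentially a one-line application of Cauchy--Schwarz together with moment bounds already available from Section~\ref{CHABNDS}, so there is no serious obstacle; the only mildly delicate point is verifying that the joint moment $\mathbb{E}[\phi_1(\eta_1)^2\phi_1(\eta_2)^2]$ stays bounded uniformly in $N$, and this is immediate from Lemma~\ref{SUPBND1}.
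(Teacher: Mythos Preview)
Your proof is correct and follows essentially the same route as the paper's own argument: pointwise Cauchy--Schwarz, then a bound of order $N^2$ on $\int_{\mathcal{S}_N}\big(\sum_j \phi_1(\eta_j)^2\big)^2\,\dd\sigma_N$ via Lemma~\ref{SUPBND1}, and finally Theorem~\ref{CHAOS} to convert $\sum_j\alpha_j^2$ into $\|s\|_2^2$. The only cosmetic difference is that the paper bounds the integral by Minkowski's inequality (yielding $\big(\sum_j(\mathbb{E}\eta_j^4)^{1/2}\big)^2\leq 24N^2$), whereas you expand the square by symmetry into diagonal and off-diagonal terms; both are equivalent one-line moment estimates.
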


\begin{proof} By the Cauchy-Schwarz inequality \eqref{Kpoly2EF}, and the fact that $\sigma_N$-almost surely $\sum_{j=1}^N\eta_j=N$,
\begin{align*}
|s| \leq \sqrt{2}\left(\sum_{j=1}^N\alpha_j^2\right)^{1/2}\left(\sum_{j=1}^N(\eta _j-1)^2\right)^{1/2} \leq 
\sqrt{2}\left(\sum_{j=1}^N\alpha_j^2\right)^{1/2}\left(\sum_{j=1}^N \eta _j^2\right)^{1/2} \ .
\end{align*}
Then 
${\displaystyle
\|s\|_4^4 \leq 4\left(\sum_{j=1}^N\alpha_j^2\right)^{2} \int_0^N \left(\sum_{j=1}^N \eta _j^2\right)^{2} {\rm d}\nu_N
}$. 
Using first the triangle inequality in $\mathcal{H}_N$  and then Lemma~\ref{SUPBND1}, 
\begin{equation}\label{TRINQ}
\left(\int_0^N \left(\sum_{j=1}^N \eta _j^2\right)^{2} {\rm d}\nu_N\right)^{1/2} \leq 
\sum_{j=1}^N \left(\int_0^N  \eta _j^4 {\rm d}\nu_N\right)^{1/2}  \leq   \sqrt{24} N\ .
\end{equation}
Then $\|s\|_4^2 \leq  2\sqrt{24} N  \sum_{j=1}^N\alpha_j^2$, and Theorem~\ref{CHAOS}  bounds
$\sum_{j=1}^N\alpha_j^2$ by a multiple of $\|s\|_2^2$.
\end{proof}

The remaining lemmas in this section will be used in the next section to take advantage of the polynomial nature of the modified rates. On a first reading, it may help to skip ahead to the next section to see how they are used, and to then return.

\begin{lm}\label{gv4lem} Let $f\in \cH_N$ be orthogonal to the constants and let $f = s+g + h$ be its trial function decomposition \eqref{pdecomp},
so that $g$ has the form
$g = \sum_{j=1}^N\varphi_j(\eta_j)$ and $s$ has the form 
$s = \sum_{j=1}^N\alpha_j\phi_1(\eta_j)$ with $\sum_{j=1}^N \alpha_j = 0$.
Then there is a constant $C$ independent of $N$ and $f$ such that 
\begin{equation}\label{gv4}
 \sum_{k=1}^N\int_{\ST}   \eta_k^2 g^2{\rm d}\sigma_N  \leq  \sum_{k=1}^N \int_{\ST}  \varphi_k(\eta_k)^2\eta_k^2 {\rm d}\sigma_N + {C}{N}\|g\|_2^2\ ,
 \end{equation}
 and
 \begin{equation}\label{gv4B}
\sum_{k=1}^N \int_{\ST}    \eta_k^2 s^2{\rm d}\sigma_N \leq  \sum_{k=1}^N \int_{\ST}  \psi_k(\eta_k)^2\eta_k^2 {\rm d}\sigma_N + {C}{N}\|s\|_2^2\ .
 \end{equation}
\end{lm}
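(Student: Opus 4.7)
The plan is to prove both inequalities by the same expansion-and-casework strategy. For \eqref{gv4}, I would write $g^2 = \sum_j \varphi_j(\eta_j)^2 + \sum_{j\neq\ell} \varphi_j(\eta_j)\varphi_\ell(\eta_\ell)$ and distribute over $\sum_k \eta_k^2$. The diagonal piece $\sum_k\sum_j \mathbb{E}[\eta_k^2 \varphi_j^2(\eta_j)]$ yields the main term on the right when $j=k$; for $j\neq k$ the conditional bound $\mathbb{E}[\eta_k^2\mid\eta_j]\leq 3$ from Lemma~\ref{SUPBND1} gives at most $3(N-1)\sum_j\|\varphi_j\|_2^2\leq CN\|g\|_2^2$ by Theorem~\ref{CHAOS}.

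For the off-diagonal piece $\sum_k\sum_{j\neq\ell}\mathbb{E}[\eta_k^2\varphi_j\varphi_\ell]$, I would split by whether $k\in\{j,\ell\}$ or not. When $k\in\{j,\ell\}$, symmetry and the definition of $K$ turn this into $2\sum_k\langle \eta^2\varphi_k,\Phi_k\rangle_{\mathcal{K}_N}$ with $\Phi_k:=\sum_{\ell\neq k}K\varphi_\ell$. Since each $\varphi_\ell$ is orthogonal to $\phi_0$ and $\phi_1$, Theorem~\ref{KOPSPEC} gives $\|K\varphi_\ell\|_2\leq|\kappa_2|\|\varphi_\ell\|_2$; combining with $\sum_\ell\|\varphi_\ell\|_2\leq\sqrt{N}(\sum_\ell\|\varphi_\ell\|_2^2)^{1/2}$ and Theorem~\ref{CHAOS} then yields $\|\Phi_k\|_2=O(N^{-3/2})\|g\|_2$. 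Using the pointwise bound $\eta\leq N$ and Cauchy--Schwarz, the cross contribution is then at most $C\|g\|_2 X^{1/2}$, where $X:=\sum_k\int\eta_k^2\varphi_k^2\,\mathrm{d}\sigma_N$ is the main right-hand side. The crude estimate $X\leq N^2\sum_k\|\varphi_k\|_2^2\leq CN^2\|g\|_2^2$ then converts this to $CN\|g\|_2^2$. For the remaining case $k\notin\{j,\ell\}$, Lemma~\ref{SUPBND1} gives $(N-2)\mathbb{E}[\eta_{k_0}^2\mid\eta_j,\eta_\ell]=\frac{2}{N-1}(N-\eta_j-\eta_\ell)^2$; expanding the square as a polynomial in $\eta_j,\eta_\ell$, each resulting moment $\langle p(\eta)\varphi_j, K\varphi_\ell\rangle$ is estimated by Cauchy--Schwarz combined with the projection onto $\mathrm{span}\{\phi_0,\phi_1\}^\perp$ (to leverage $\|K\varphi_\ell\|_2\leq|\kappa_2|\|\varphi_\ell\|_2$), producing contributions of order $\|g\|_2^2$ or $N^{-1}X^{1/2}\|g\|_2$.

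The argument for \eqref{gv4B} is structurally identical, but uses the cancellation $\sum_j\alpha_j=0$ in place of the orthogonality $\varphi_j\perp\phi_1$. Since $K\phi_1=-\phi_1/(N-1)$, the case $k\in\{j,\ell\}$ simplifies directly via Lemma~\ref{SEIG}: $\sum_{\ell\neq k}K\psi_\ell(\eta_k)=\frac{\alpha_k}{N-1}\phi_1(\eta_k)$, so this contribution equals $\frac{2}{N-1}(\sum_k\alpha_k^2)\int\eta^2\phi_1^2\,\mathrm{d}\nu_N=O(N^{-1}\|s\|_2^2)$ because $\int\eta^2\phi_1^2\,\mathrm{d}\nu_N$ is bounded (moments of $\nu_N$ are uniformly bounded by exponential moments). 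For $k\notin\{j,\ell\}$ I would again expand $(N-\eta_j-\eta_\ell)^2$; each monomial moment $\mathbb{E}[\eta_j^a\eta_\ell^b\phi_1(\eta_j)\phi_1(\eta_\ell)]$ is either directly computed using $K\phi_1=\kappa_1\phi_1$ (the leading term is $-\frac{1}{N-1}$) or bounded using the uniform moment bounds of Lemma~\ref{SUPBND1}. Combined with the identity $\sum_{j\neq\ell}\alpha_j\alpha_\ell=-\sum_j\alpha_j^2$ and Theorem~\ref{CHAOS}, each of these contributions is $O(\|s\|_2^2)\leq CN\|s\|_2^2$.

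The main obstacle will be the bookkeeping in the case $k\notin\{j,\ell\}$: expanding $(N-\eta_j-\eta_\ell)^2$ produces six monomials, and for each one the right combination of Cauchy--Schwarz, projection onto $\mathrm{span}\{\phi_0,\phi_1\}^\perp$, and the eigenvalue estimate $|\kappa_2|=2/[N(N-1)]$ must be chosen so that the prefactor $\frac{2}{N-1}$ and the combinatorial sum over $j\neq\ell$ combine cleanly with the bounds on $X$, $\sum_j\|\varphi_j\|_2^2$, and $\|g\|_2^2$ (respectively $\sum_j\alpha_j^2$ and $\|s\|_2^2$). The key insight that makes everything work is that the orthogonality $\varphi_\ell\perp\phi_1$ (or the zero-sum constraint $\sum_j\alpha_j=0$) provides the power of $N^{-1}$ needed to absorb the $N^2$ coming from the pointwise bound $\eta_k\leq N$.
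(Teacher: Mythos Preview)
Your approach is essentially the same as the paper's: expand the triple sum over $(i,j,k)$ and sort by which indices coincide, then control each block with the eigenvalue bounds on $K$ from Theorem~\ref{KOPSPEC}, the conditional moment bounds of Lemma~\ref{SUPBND1}, and Theorem~\ref{CHAOS}. Your organization (first splitting $g^2$ into diagonal and off-diagonal, then splitting by whether $k\in\{j,\ell\}$) is a harmless reordering of the paper's four-way split into \eqref{chi1}--\eqref{chi4}, and your treatment of the $k\in\{j,\ell\}$ block via $\Phi_k=\sum_{\ell\neq k}K\varphi_\ell$ is a mild repackaging of the paper's termwise estimate.

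One point to tighten: in the $k\notin\{j,\ell\}$ block you assert that every monomial from the expansion of $(N-\eta_j-\eta_\ell)^2$ produces a moment of the form $\langle p(\eta)\varphi_j, K\varphi_\ell\rangle$, to which the $\kappa_2$ bound applies directly. That is true for all monomials \emph{except} the cross term $\eta_j\eta_\ell$, which gives $\langle \eta\varphi_j, K(\eta\varphi_\ell)\rangle$; here $\eta\varphi_\ell$ need not be orthogonal to $\phi_1$, so $\kappa_2$ does not apply as stated. The paper handles this term by centering $\xi_j := \eta\varphi_j - {\mathbb E}[\eta\varphi_j]$ and using the $\kappa_1$ bound together with $\|\xi_j\|_2\leq N\|\varphi_j\|_2$, which yields a contribution of order $CN\|g\|_2^2$ (not $C\|g\|_2^2$ as your proposal suggests). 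Your projection idea can in fact be pushed through for this term as well --- decomposing $\eta\varphi_\ell$ into its $\phi_0$, $\phi_1$, and orthogonal components and noting $|\langle\phi_1,\eta\varphi_\ell\rangle|\leq C\|\varphi_\ell\|_2$ --- but you should make this explicit, since it is the one place where the naive form $\langle p(\eta)\varphi_j, K\varphi_\ell\rangle$ breaks down. The analogous point for \eqref{gv4B} is handled in the paper by a direct Cauchy--Schwarz estimate on $\int\psi_i\psi_j\eta_i\eta_j\,{\rm d}\sigma_N$ using the explicit form $\psi_j=\alpha_j\phi_1$ and the fourth-moment bound of Lemma~\ref{SUPBND1}.
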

\begin{proof} We first consider \eqref{gv4}. 
\begin{eqnarray}
\sum_{k=1}^N\int_{\ST}  \eta_k^2 g^2 {\rm d}\sigma_N &=& \sum_{i=1}^N\sum_{j=1}^N \sum_{k=1}^N\int_{\ST}  \varphi_i(\eta_i)\varphi_j(\eta_j)\eta_k^2 {\rm d}\sigma_N\nonumber\\
&=&  \sum_{k=1}^N \int_{\ST}  \varphi_k(\eta_k)^2\eta_k^2 {\rm d}\sigma_N\label{chi1}\\
&+& 2\sum_{j\neq k} \int_{\ST}  \varphi_j(\eta_j)\varphi_k(\eta_k)\eta_k^2 {\rm d}\sigma_N\label{chi2}\\
&+& \sum_{j\neq k} \int_{\ST}  \varphi_j(\eta_j)^2\eta_k^2 {\rm d}\sigma_N\label{chi3}\\
&+& \sum_{|\{i,j, k\}|=3} \int_{\ST} \varphi_i(\eta_i)\varphi_j(\eta_j)\eta_k^2 {\rm d}\sigma_N,\label{chi4}
\end{eqnarray}
where the sums in \eqref{chi2} and \eqref{chi3} are on  $j$ and $k$ such that $j\neq k$ while \eqref{chi4} is on $i$, $j$ and $k$ such that the set $\{i,j, k\}$ has cardinality 3.

For $j\neq k$, 
\begin{eqnarray*}
\left|\int_{\ST}  \varphi_j(\eta_j)\varphi_k(\eta_k)\eta_k^2 {\rm d}\sigma_N\right| =  \left|\int_{\ST}  K\varphi_j(\eta)\varphi_k(\eta )\eta^2 {\rm d}\nu_N\right|
&\leq& \kappa_2 N^2 \|\varphi_j\|_2\|\varphi_k \|_2\\ &\leq& \frac{N}{N+1}(\|\varphi_j\|_2^2 +\|\varphi_k\|_2^2)\ .
\end{eqnarray*}
Therefore the sum in \eqref{chi2} is bounded above by $4N\sum_{j=1}^N\|\varphi_j\|_2^2$ and then Theorem~\ref{CHAOS} yields an upper bound of the form
$CN\|g\|_2^2$. 

Next, by
Lemma~\ref{SUPBND1} 
$$\int_{\ST}  \varphi_j(\eta_j)^2\eta_k^2 {\rm d}\sigma_N = \int_{\ST}  \varphi_j(\eta_j)^2{\mathbb E}\{\eta_k^2 \ |\ \eta_j\} {\rm d}\sigma_N \leq 3\|\varphi_j\|_2^2\ .$$
Then again using Theorem~\ref{CHAOS}, the sum in \eqref{chi2} is bounded above by $CN\|g\|_2^2$\ .

Finally, for $i$, $j$ and $k$ are distinct, 
\begin{equation*}
 \int_{\ST} \varphi_i(\eta_i)\varphi_j(\eta_j)\eta_k^2 {\rm d}\sigma_N = 
 \int_{\ST} \varphi_i(\eta_i)\varphi_j(\eta_j){\mathbb E}\{\eta_k^2|\eta_i,\eta_j\} {\rm d}\sigma_N\ .
 \end{equation*}
By  Lemma~\ref{SUPBND1} and simple algebra,
$$
{\mathbb E}\{\eta_k^2|\eta_i,\eta_j\}  = \frac{2N^2}{(N-2)(N-1)} +   \frac{4N}{(N-2)(N-1)}(\eta_i+\eta_j) + 
 \frac{2}{(N-2)(N-1)} (\eta_i^2+2\eta_i\eta_j +\eta_j^2)\ .
$$
Therefore, to estimate ${\int_{\ST} \varphi_i(\eta_i)\varphi_j(\eta_j){\mathbb E}\{\eta_k^2|\eta_i,\eta_j\} {\rm d}\sigma_N}$, we need estimates on 
\begin{equation}\label{DECLEMC1}
\begin{split}
 \int_{\ST} \varphi_i(\eta_i)&\varphi_j(\eta_j){\rm d}\sigma_N\ ,\qquad\qquad 
  \int_{\ST} \varphi_i(\eta_i)\varphi_j(\eta_j)(\eta_i + \eta_j){\rm d}\sigma_N, \\ 
  &\int_{\ST} \varphi_i(\eta_i)\varphi_j(\eta_j) (\eta_i^2+2\eta_i\eta_j +\eta_j^2){\rm d}\sigma_N\ . 
\end{split}
\end{equation}

First,  
\begin{equation}\label{chi2MOD}
\left|\int_{\ST} \varphi_i(\eta_i)\varphi_j(\eta_j){\rm d}\sigma_N\right| = |\langle \varphi_i,K\varphi_j\rangle| \leq 
\kappa_2\|\varphi_i\|_2\|\varphi_j\|_2 \leq \frac{1}{N(N+1)}(\|\varphi_i\|_2^2 + \|\varphi_j\|_2^2 )\ .
\end{equation}
Then using Theorem~\ref{CHAOS},  the contribution of the first integral in \eqref{DECLEMC1} to the sum of integrals in \eqref{chi4} is bounded by $C\|g\|_2^2$, even better than we require. 

Next, using the pointwise bound $\eta_j \leq N$, and Cauchy-Schwarz inequality
$$
\left|\int_{\ST} \varphi_i(\eta_i)\varphi_j(\eta_j)\eta_j{\rm d}\sigma_N\right|  = \left|\int_0^N K\varphi_i(\eta)\varphi_j(\eta)\eta{\rm d}\nu_N\right| \leq N\kappa_2\|\varphi_i\|_2\|\varphi_j\|_2  \leq  \frac{1}{N+1}(\|\varphi_i\|_2^2 + \|\varphi_j\|_2^2 )\ .
$$
Then using Theorem~\ref{CHAOS},  the contribution of the second integral in \eqref{DECLEMC1} to  the sum of integrals in \eqref{chi4} is bounded by $C\|g\|_2^2$, again even better than we require. 

The contribution of the third integral in \eqref{DECLEMC1} has two parts, one involving $\eta_i^2+ \eta_j^2$ and the other involving $\eta_i\eta_j$. 
We can treat the contribution of the part involving $\eta_i^2+ \eta_j^2$ in the same way we estimates the contribution of the second integral in 
\eqref{DECLEMC1}:
$$
\left|\int_{\ST} \varphi_i(\eta_i)\varphi_j(\eta_j)\eta^2_j{\rm d}\sigma_N\right|  = \left|\int_0^N K\varphi_i(\eta)\varphi_j(\eta)\eta^2{\rm d}\nu_N\right| \leq N^2\kappa_2\|\varphi_i\|_2\|\varphi_j\|_2  \leq  \frac{N}{N+1}(\|\varphi_i\|_2^2 + \|\varphi_j\|_2^2 )\ .
$$
Then using Theorem~\ref{CHAOS},  the contribution of this integral in \eqref{DECLEMC1} to the sum of integrals in \eqref{chi4} is bounded by $C\|g\|_2^2$, once more even better than we require.  Finally, we come to the main term of the third integral in \eqref{DECLEMC1}.

Define $\xi_j(\eta_j) = \eta_j\varphi_j(\eta_j) - {\mathbb E}[\eta_j\varphi_j(\eta_j)] = \eta_j\varphi_j(\eta_j) - \langle\eta_j,\varphi_j(\eta_j)\rangle$ so that $\xi_j\perp 1$ in $\mathcal{K}_N$.
By Cauchy-Schwarz inequality, \eqref{Kpoly2} and the fact that $\int_0^N\eta^2{\rm d}\nu_N \leq2$ by \eqref{Kpoly2EF},

\begin{eqnarray*}
\left|\int_{\ST} \varphi_i(\eta_i)\varphi_j(\eta_j)\eta_i\eta_j{\rm d}\sigma_N\right|  &=& \left| \int_{\ST} \xi_i(\eta_i)\xi_j(\eta_j){\rm d}\sigma_N
+ \langle\eta_i,\varphi_i(\eta_i)\rangle\langle\eta_j,\varphi_j(\eta_j)\rangle\right|\\
&\leq & \left|\langle \xi_i, K \xi_j\rangle\right| + 2\|\varphi_i\|_2\|\varphi_j\|_2
\leq\frac{1}{N-1}\|\xi_i\|_2\|\xi_j\|_2 + 2\|\varphi_i\|_2\|\varphi_j\|_2\ .
\end{eqnarray*}
Then since 
${\displaystyle
\|\xi_i\|_2^2=\int_0^N \eta^2\varphi_i(\eta)^2{\rm d}\nu_N-{\mathbb E}[\eta_j\varphi_j(\eta_j)]^2 \leq \int_0^N \eta^2\varphi_i(\eta)^2{\rm d}\nu_N \leq N^2\|\varphi_i\|_2^2}$,
Young's inequality yields
$$
\left|\int_{\ST} \varphi_i(\eta_i)\varphi_j(\eta_j)\eta_i\eta_j{\rm d}\sigma_N\right|  \leq \left(\frac{N^2}{2(N-1)} + 1\right)(\|\varphi_i\|_2^2 + \|\varphi_j\|_2^2)\ .
$$
Then by Theorem~\ref{CHAOS},  the contribution  to the estimate on the sum of integrals in \eqref{chi4} is bounded by $CN\|g\|_2^2$, as required. 

The proof of \eqref{gv4B} is similar. Proceeding in the same way, we must estimate each of the integrals in \eqref{chi2},  \eqref{chi3} and  \eqref{chi4} with each $\varphi$ replaced by $\psi$, and we must show that each of these is bounded above in magnitude by $CN\|s\|_2^2$. 

Replacing $\varphi$ by $\psi$ in the integral on the left in \eqref{chi2MOD}, we must replace $\kappa_2$ by $|\kappa_1|$ on the right, and since 
$\kappa_2 = \frac{2}{N}|\kappa_1|$, this degrades the estimate by a factor of $N$.  However, we had an extra factor of $N$ here when proving
\eqref{gv4}.  For the same reason, the same arguments we used to estimate the various contributions to the error term in 
\eqref{gv4} in which we used $\kappa_2$ go through to yield the required bounds for \eqref{gv4B}, except now these contributions are of the full size 
$CN\|s\|_2^2$ instead of lower order. 

It remains to estimate $\left|\int_{\ST} \psi_i(\eta_i)\psi_j(\eta_j)\eta_i\eta_j{\rm d}\sigma_N\right|$.
Recalling \eqref{sDEFB}, \eqref{Kpoly2EF}, and \eqref{SUPBND1}, by the Cauchy-Schwarz inequality we get
$$
\left|\int_{\ST} \psi_i(\eta_i)\psi_j(\eta_j)\eta_i\eta_j{\rm d}\sigma_N\right|  \leq 2|\alpha_i||\alpha_j| \left(\int_0^N  (\eta-1)^4{\rm d}\nu_N\right)^{1/2}
\left(\int_0^N  \eta^4{\rm d}\nu_N\right)^{1/2}  \leq C(\alpha_i^2+\alpha_j^2)\ .
$$
Multiplying by $\frac{2}{(N-2)(N-1)}$, and summing over $i$, $j$, and $k$, we find that the integral in \eqref{chi4} with each $\varphi$ replaces by $\psi$ is bounded above in magnitude by  $C\|s\|_2^2$, this time a lower order term.
\end{proof}

\section{Improved bound for large $N$}\label{LARGEN}

Using our decomposition for $f$ orthogonal to the constants, we expand
\begin{eqnarray} 
 \Dto(f,f) 
  &=& \Dto(g,g) +  \Dto(s,s) + \Dto(h,h)\label{ORDECA}\\
  &+& 2\Dto(p,h) + 2\Dto(g,s).\label{ORDECB}
  \end{eqnarray}
  
  The next lemma says for sufficiently large $N$, the  two terms in \eqref{ORDECB} are negligibly small. After this is proved, we are left with the relatively simple problem of bounding from below each of the three terms in \eqref{ORDECA}.

 \begin{lm}\label{almorth}  
 There is a constant $C$ independent of $N$ such that if $f\in \cH_N$ is orthogonal to the constants, then the components $s$, $g$, and $h$ in the trial function decomposition \eqref{pdecomp} $f = s+g+h$  satisfy
  \begin{equation}\label{wabnd17}
 2|\Dto(p,h)| + 2|\Dto(g,s)| \leq \frac{C}{N^{3/2}}\|f\|_2^2\ .
\end{equation}
 \end{lm}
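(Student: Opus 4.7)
The plan is to split $\Dto(p,h) = \Dto(s,h) + \Dto(g,h)$ and estimate the three resulting pieces together with $\Dto(g,s)$ separately. By Lemma~\ref{nullalpha} one has $\widetilde{P}^{(\gamma)} h = 0$, so $\Dto(p,h) = \int \widetilde{W}^{(\gamma)}\, p h\, d\sigma_N$ and $\Dto(g,s) = \int \widetilde{W}^{(\gamma)}\, g s\, d\sigma_N - \langle g,\widetilde{P}^{(\gamma)} s\rangle$. The orthogonalities $\langle p,h\rangle = 0$ and $\langle g,s\rangle = 0$ (the latter from the $K$-eigenbasis structure) let one replace $\widetilde{W}^{(\gamma)}$ by $\tilde w := \widetilde{W}^{(\gamma)} - \mathbb{E}\widetilde{W}^{(\gamma)}$ in these integrals. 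A useful simplification comes from $\sum_k \eta_k = N$, which kills the linear term in $m_N^{(\gamma)}(\eta_k)$ and gives $\widetilde{W}^{(\gamma)} - 1 = -\frac{1-\gamma}{N(N-1)^2}\sum_k (\eta_k-1)^2$. In the $K$-eigenbasis, the $\phi_1$-component of the resulting single-variable function sums to zero (since $\sum_k \phi_1(\eta_k) = 0$ a.s.\ because $\phi_1 \propto \eta - 1$), so effectively $\tilde w = c_2 \sum_k \phi_2(\eta_k)$ with $|c_2| = O(N^{-3})$, yielding the crucial bound $\|\tilde w\|_2 = O(N^{-5/2})$.

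The second essential input is the sup-norm bound $\|s\|_\infty \le CN \|s\|_2$, obtained by applying Cauchy--Schwarz to $s = \sum_j \alpha_j \phi_1(\eta_j)$ and using $\sum_j \phi_1(\eta_j)^2 \le CN^2$ (which follows from $\sum \eta_j^2 \le (\sum \eta_j)^2 = N^2$), together with Theorem~\ref{CHAOS}. With this, both $\Dto(s,h) = \int \tilde w\, s h\, d\sigma_N$ and the weight piece $\int \tilde w\, g s\, d\sigma_N$ of $\Dto(g,s)$ are bounded via Cauchy--Schwarz by $\|\tilde w\|_2 \|s\|_\infty \|h\|_2 \le C N^{-3/2} \|f\|_2^2$ and $\|\tilde w\|_2 \|s\|_\infty \|g\|_2 \le C N^{-3/2}\|f\|_2^2$ respectively. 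For the remaining piece $\langle g, \widetilde{P}^{(\gamma)} s\rangle$ of $\Dto(g,s)$, Lemma~\ref{SEIG} gives $\widetilde{P}^{(\gamma)} s$ explicitly, and expanding $g = \sum_j \varphi_j$ in the $K$-eigenbasis with $\varphi_j \perp \phi_0, \phi_1$ lets one apply $\|K \varphi_j\|_2 \le \kappa_2 \|\varphi_j\|_2 = O(N^{-2}) \|\varphi_j\|_2$ from Theorem~\ref{KOPSPEC}; combined with Theorem~\ref{CHAOS} this yields $|\langle g, \widetilde{P}^{(\gamma)} s\rangle| \le C N^{-2} \|f\|_2^2$.

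For $\Dto(g,h) = -\frac{1-\gamma}{N(N-1)^2}\sum_k \int (\eta_k - 1)^2\, g h\, d\sigma_N$, the key trick is that $(\eta_k - 1)^2 P_k g$ is a function of $\eta_k$ alone and hence orthogonal to $h$, so each integral equals $\int (\eta_k - 1)^2 (g - P_k g) h\, d\sigma_N$. A Cauchy--Schwarz splitting of the quadratic factor bounds each summand by $\|(\eta_k - 1)(g - P_k g)\|_2 \|(\eta_k - 1) h\|_2$, and a second Cauchy--Schwarz in $k$ reduces matters to $\sum_k \|(\eta_k - 1)(g - P_k g)\|_2^2$ and $\sum_k \|(\eta_k - 1) h\|_2^2$. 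The first is handled via the pointwise inequality $\mathbb{E}[(g - P_k g)^2\mid \eta_k] \le \mathbb{E}[g^2\mid\eta_k]$ together with Lemma~\ref{gv4lem}; the second by direct moment computation against $\sigma_N$ using Lemma~\ref{SUPBND1}. Combined with the prefactor $\frac{1-\gamma}{N(N-1)^2}$, this yields $|\Dto(g,h)| \le C N^{-3/2} \|f\|_2^2$.

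The main obstacle will be the $\Dto(g,h)$ estimate. Unlike $s$, the function $g$ has no sup-norm bound in terms of $\|g\|_2$ (it can contain arbitrarily high-degree polynomial components in each variable), so the simple product trick $\|\tilde w s\|_2 \le \|\tilde w\|_2 \|s\|_\infty$ used for $\Dto(s,h)$ and $\Dto(g,s)$ is unavailable. It is precisely the refined polynomial form $\tilde w = c_2 \sum_k \phi_2(\eta_k)$ together with the orthogonality of $h$ to all single-variable functions that allows the extra factor of $N^{-1/2}$ to be recovered, via the $P_k$-based reduction and a careful application of Lemma~\ref{gv4lem}.
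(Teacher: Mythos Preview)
Your overall strategy and three of the four cross estimates are fine; the bounds $\|\tilde w\|_2 = O(N^{-5/2})$ and $\|s\|_\infty \le CN\|s\|_2$ are correct and give $|\Dto(s,h)|,\ |\int\tilde w\,gs| \le CN^{-3/2}\|f\|_2^2$ as claimed, and your treatment of $\langle g,\widetilde P^{(\gamma)}s\rangle$ also works once one uses $\varphi_k\perp\phi_1$ to kill the leading term. These pieces are handled somewhat differently from the paper (which uses $\|s\|_4$ rather than $\|s\|_\infty$, and never isolates $\tilde w$ as a $\phi_2$-sum), but your route is valid.

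The gap is in the $\Dto(g,h)$ estimate. After the $P_k$-trick and your Cauchy--Schwarz splitting,
\[
|\Dto(g,h)| \le \frac{C}{N^3}\Bigl(\sum_k\int(\eta_k-1)^2 g^2\Bigr)^{1/2}\Bigl(\sum_k\int(\eta_k-1)^2 h^2\Bigr)^{1/2}.
\]
For the first factor, Lemma~\ref{gv4lem} gives $\sum_k\int\eta_k^2 g^2 \le \sum_k\int\varphi_k^2\eta_k^2 + CN\|g\|_2^2$, and the term $\sum_k\int\varphi_k^2\eta_k^2$ can genuinely be of order $N^2\|g\|_2^2$ (nothing prevents some $\varphi_k$ from concentrating near $\eta_k=N$). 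For the second factor, the condition $P_kh=0$ constrains $h$, not $h^2$, so Lemma~\ref{SUPBND1} is of no help and one is stuck with the pointwise bound $\sum_k(\eta_k-1)^2\le N^2$, giving $\sum_k\int(\eta_k-1)^2h^2\le N^2\|h\|_2^2$. Altogether your argument yields only $|\Dto(g,h)|\le CN^{-1}\|g\|_2\|h\|_2$, which is not summable and does not prove \eqref{wabnd17}.

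The paper avoids this loss by not splitting $p=s+g$ in the $(p,h)$ term at all. Writing $p=\sum_j\rho_j(\eta_j)$, the term $\int\eta_k^2\rho_k(\eta_k)h$ vanishes (your $P_k$-trick), and for $j\neq k$ one applies Cauchy--Schwarz keeping the single-variable structure intact:
\[
\Bigl|\int\eta_k^2\,\rho_j(\eta_j)\,h\Bigr|\le \|h\|_2\Bigl(\int\mathbb{E}\{\eta_k^4\mid\eta_j\}\rho_j^2\Bigr)^{1/2}\le \sqrt{24}\,\|h\|_2\|\rho_j\|_2,
\]
using Lemma~\ref{SUPBND1}. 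Summing over $j\neq k$ and $k$ gives $CN^{3/2}\|h\|_2\|p\|_2$, hence $|\Dto(p,h)|\le CN^{-3/2}\|f\|_2^2$. Your $P_k$-reduction is in fact equivalent to dropping the $j=k$ term; the loss occurs only when you collapse $g-P_kg$ via Cauchy--Schwarz instead of expanding it as $\sum_{j\neq k}\varphi_j(\eta_j)$ (the $K\varphi_j(\eta_k)$ pieces again vanish against $h$) and exploiting the conditional fourth moment bound term by term.
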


 \begin{proof}
Since $\widetilde{P}^{(\gamma)} h = 0$, 
${\displaystyle \Dto(p,h)  = \int_{\ST} \widetilde W^{({\gamma})} p h\,\dd \sigma_N =   -\frac{1-\gamma}{(N-1)^2}\frac1N  \sum_{k=1}^N  \int_{\ST} \eta_k^2 p h\, {\rm d}\sigma_N }$.

Therefore, writing $p = \sum_{j=1}^N \rho_j(\eta_j)$ as in \eqref{struc}, 

\begin{eqnarray}
 \Dto(p,h)  
&= &  -\frac{1-\gamma}{(N-1)^2}\frac1N \sum_{k=1}^N   \int_{\ST} \eta_k^2 \rho_k(\eta_k)h\, \dd \sigma_N\label{nct1}\\
&- & \frac{1-\gamma}{(N-1)^2}\frac1N   \sum_{j\neq k}   \int_{\ST}   \eta_k^2 \rho_j(\eta_j) h\, \dd \sigma_N  \label{nct2}\ .
\end{eqnarray}
The integral in \eqref{nct1} vanishes since $P_k h =0$.
Next, if $j\neq k$ by Cauchy-Schwarz inequality and Lemma~\ref{SUPBND1},
\begin{eqnarray*}
\left| \int_{\ST}   \eta_k^2 \rho_j(\eta_j) h \dd \sigma_N\right| &\leq& \|h\|_2 \left( \int_{\ST}   \eta_k^4 \rho^2_j(\eta_j)\dd \sigma_N\right)^{1/2}\\
&=& \|h\|_2 \left( \int_{\ST}  {\mathbb E}\{ \eta_k^4 | \eta_j\}\rho^2_j(\eta_j)\dd \sigma_N\right)^{1/2}\\
&\leq& \sqrt{24}\|h\|_2\|\rho_j\|_2\ .
\end{eqnarray*}
Therefore, the quantity in \eqref{nct2} is bounded above in absolute value by
$$
\frac{(1-\gamma)\sqrt{24}}{N(N-1)}   \|h\|_2\sum_{j=1}^N \|\rho_j\|_2 \leq 
\frac{(1-\gamma)\sqrt{24}}{(N-1)N^{1/2}} \|h\|_2\left(\sum_{j=1}^N \|\rho_j\|_2^2\right)^{1/2}\ .
$$
Then by Theorem~\ref{CHAOS} and the arithmetic-geometric mean inequality, there is a constant $C$ independent of $N$ and $\gamma\in (0,1)$ such that 
\begin{equation}\label{ORDECAZ1}
\left |\Dto(p,h)\right| \leq C\frac{1}{N^{3/2}}(1-\gamma) (\|p\|_2^2 + \|h\|_2^2) = C\frac{1}{N^{3/2}}(1-\gamma)\|f\|_2^2.
\end{equation}

Next, consider 
${\displaystyle
\Dto(g,s)  = \int_{\ST} \widetilde W^{({\gamma})} gs\,\dd \sigma_N  - \langle g, \widetilde{P}^{(\gamma)} s\rangle}$.
We estimate the terms separately, starting with $\langle g, \widetilde{P}^{(\gamma)} s\rangle$.
Then by Lemma~\ref{SEIG},
\begin{align*}
&\widetilde{P}^{(\gamma)}s = \frac{1}{N}\sum_{k=1}^N m^{(\gamma)}(\eta_k) P_k s 
=\frac{1}{N-1}\sum_{k=1}^N m^{(\gamma)}(\eta_k) \alpha_k \phi_1 (\eta_k)\\
&=\frac{1}{N-1}\sum_{k=1}^N (m^{(\gamma)}(\eta_k)-1)\alpha_k  \phi_1(\eta_k) + \frac{1}{N-1}s\ .
\end{align*}
Therefore, since $g$ and $s$ are orthogonal, 
${\displaystyle
\langle g, P^{(\gamma)}s\rangle = \frac{1}{N-1}\sum_{k=1}^N \langle g, (m^{(\gamma)}-1)\alpha_k  \phi_1 \rangle}$.
Then by H\"older's inequality,
$$
|\langle g, (m^{(\gamma)}-1)\alpha_k  \phi_1 \rangle| \leq |\alpha_k|\|g\|_2 \|m^{(\gamma)}-1\|_4\|\phi_1\|_4\ .
$$

By \eqref{WELOBND}
$m^{(\gamma)}(\eta) -1  =   \frac{\gamma}{N-1}\phi_1(\eta)
-\frac{1-\gamma}{(N-1)^2} \phi_1^2(\eta)
$.
Therefore  for some $C$ independent of $N$ by Lemma~\ref{SUPBND1}
$$\|m^{(\gamma)}-1\|_4  \leq \frac{\gamma}{N-1}\|\phi_1\|_4 
+\frac{1-\gamma}{(N-1)^2} \|\phi_1^2\|_4 \leq \frac{C}{N} $$
$$
|\langle g,\widetilde{P}^{(\gamma)}s\rangle| \leq \frac{C}{(N-1)N} \|g\|_2 \sum_{k=1}^N |\alpha_k| \leq 
\frac{C}{(N-1)N^{1/2}} \|g\|_2 \left(\sum_{k=1}^N \alpha^2_k\right)^{1/2}\ .
$$
By Theorem~\ref{CHAOS}, for some constant $C$ independent of $N$, $\sum_{k=1}^N \alpha^2_k \leq C\|s\|^2$. 
Then by the arithmetic-geometric mean inequality, there is a constant $C$ independent of $N\geq  3$ such that
\begin{equation}\label{ORDECAZ2}
\langle g, \widetilde{P}^{(\gamma)} s\rangle \leq  C\frac{1}{N^{3/2}} (\|g\|^2 + \|s\|^2) \leq  \frac{C}{N^{3/2}} \|f\|^2 \ .
\end{equation}

It remains to consider   
$$\int_{\ST} \widetilde W^{({\gamma})} gs\dd \sigma_N  = -\frac{1-\gamma}{N(N-1)^2} \sum_{k=1}^N \int_{\ST}   (\eta_k^2- 1)  gs {\rm d}\sigma_N 
= -\frac{1-\gamma}{(N-1)^2} \int_{\ST}  F  gs {\rm d}\sigma_N$$
where we have used \eqref{wabnd1} and $F := \frac1N   \sum_{k=1}^N   (\eta_k^2- 1)$.
Again by Lemma~\ref{SUPBND1} there is a positive constant $C$ such that $\|F\|_4 \leq C$ uniformly in $N$.
Then by H\"older's inequality and then Lemma~\ref{sL4B},
$$
\left| \int_{\ST}  F  gs {\rm d}\sigma_N\right|  \leq \|g\|_2\|F\|_4\|s\|_4 \leq CN^{1/2}\|g\|_2\|s\|_2.
$$
Altogether,
$$
\left|\int_{\ST} \widetilde W^{({\gamma})} gs\dd \sigma_N\right| \leq \frac{C}{N^{3/2}}(\|g\|_2^2+ \|s\|_2^2) \leq \frac{C}{N^{3/2}}\|f\|_2^2\ .
$$
Combining this with \eqref{ORDECAZ2} yields the second inequality in \eqref{wabnd17}.
\end{proof}

Now we turn to the ``on diagonal'' lower bounds, first proving some upper bounds on terms involving $\widetilde{ P}^{(\gamma)}$ as defined in \eqref{wabnd2}. 

\begin{lm}\label{HHBND} Let $f\in \cH_N$ be orthogonal to the constants. Let $f = s+g+h$ be its trial function decomposition as in \eqref{pdecomp}.
Then
\begin{equation}\label{HHBND1} 
\Dto(h,h)  \geq \|h\|_2^2 \left(1 -\frac{\gamma-1}{N-1}\right)\ .
\end{equation}
\end{lm}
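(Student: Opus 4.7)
The proof will be very short and rests on two ingredients from the preceding material. First I would observe that by Definition~\ref{decompdef}, $h$ is the orthogonal projection of $f$ onto $\mathcal{A}_N^\perp$. By Lemma~\ref{nullalpha}, $\mathcal{A}_N^\perp$ is precisely the null space of $\widetilde{P}^{(\gamma)}$ (and this null space is independent of $\gamma$, being the common null space of the $P_k$). In particular $\widetilde{P}^{(\gamma)}h=0$, so $\langle h,\widetilde{P}^{(\gamma)} h\rangle = 0$, and the cross-term in the definition \eqref{Dtodef} of $\Dto$ vanishes, leaving
$$
\Dto(h,h) \;=\; \int_{\ST} \widetilde{W}^{(\gamma)}(\boldsymbol{\eta})\, h^2\,\dd\sigma_N.
$$

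Second, I would invoke the pointwise lower bound on $\widetilde{W}^{(\gamma)}$ already recorded in \eqref{wabnd1}, namely
$$
\widetilde{W}^{(\gamma)}(\boldsymbol{\eta}) \;\geq\; 1 - \frac{1-\gamma}{N-1}.
$$
This is elementary: from the constraint $\sum_k \eta_k = N$ one has $\frac{1}{N}\sum_{k=1}^N(\eta_k^2-1) \leq N-1$, the extreme value being achieved at a vertex of $\ST$ where one particle carries the full energy $N$ and the rest are zero. Substituting into the integral and factoring out $\|h\|_2^2$ yields the claimed bound.

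I do not anticipate any real obstacle: the entire content of the lemma is that the trial function decomposition in Definition~\ref{decompdef} was engineered so that $h$ lies in the kernel of $\widetilde{P}^{(\gamma)}$, which reduces $\Dto(h,h)$ to a pure weighted $L^2$-norm of $h$, and this is controlled by the worst-case value of the quadratic weight $\widetilde{W}^{(\gamma)}$ alone. In particular, the lower bound is uniform in $f$: no information about the structure of $h$ beyond its membership in $\mathcal{A}_N^\perp$ is needed.
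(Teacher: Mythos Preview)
Your proof is correct and follows exactly the same route as the paper's own proof: invoke Lemma~\ref{nullalpha} to get $\widetilde{P}^{(\gamma)}h=0$, reduce $\Dto(h,h)$ to $\int_{\ST}\widetilde{W}^{(\gamma)}h^2\,\dd\sigma_N$, and apply the pointwise lower bound recorded in \eqref{wabnd1}. Note that the constant you obtain, $1-\tfrac{1-\gamma}{N-1}$, is what \eqref{wabnd1} actually gives; the $\tfrac{\gamma-1}{N-1}$ in the displayed statement is a sign typo.
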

  
\begin{proof} By Lemma~\ref{nullalpha}, $\widetilde{P}^{(\gamma)}h =0$.  Hence 
${\displaystyle
\Dto(h,h) = \int_{\ST} \widetilde W^{({\gamma})} h^2\dd \sigma_N \geq \|h\|_2^2 \left(1 -\frac{\gamma-1}{N-1}\right)}$
 by \eqref{wabnd1}. 
\end{proof}

\begin{lm}\label{GGBND} Let $f\in \cH_N$ be orthogonal to the constants. Let $f = s+g+h$ be its trial function decomposition as in \eqref{pdecomp}. 
Then there is a constant $C$ independent of $N$ such that 
\begin{equation}\label{GGBND1} 
\Dto(g,g)  \geq \|g\|_2^2 \left(1 -\frac{1}{N} - \frac{C}{N^2}\right) 
\ .
\end{equation}
\end{lm}

\begin{proof} While the pointwise lower bound on $\widetilde W^{({\gamma})}$ sufficed in Lemma~\ref{HHBND}, this is because $\langle h, P^{(\gamma)}h\rangle =0$. Since, in general,  $\langle g, P^{(\gamma)}g\rangle \neq 0$, we need a better lower bound on $\int_{\ST} \widetilde W^{({\gamma})} g^2\dd \sigma_N$.
Combining the {\em identity} in \eqref{wabnd1} with Lemma~\ref{gv4lem},
 \begin{eqnarray}\int_{\ST} \widetilde W^{({\gamma})} g^2\dd \sigma_N &=&
 \|g\|_2^2 - \frac{1-\gamma}{(N-1)^2}\frac1N  \sum_{k=1}^N\int_{\ST}   \eta_k^2 g^2\dd \sigma_N \nonumber\\
 &\geq& \|g\|_2^2  - \frac{1-\gamma}{(N-1)^2}\frac1N  \sum_{k=1}^N\int_{\ST}   \eta_k^2 \varphi_k(\eta_k)^2{\rm d}\sigma_N -  \frac{C}{N^2}\|g\|_2^2\ . 
 \label{GGBND2} 
 \end{eqnarray}
 Combining this with Lemma~\ref{gPUB},
 \begin{equation} \label{GGBND2R} 
 \Dto(g,g)  \geq \|g\|_2^2 -  \frac1N  \sum_{k=1}^N\int_{\ST} \left[ \frac{1-\gamma}{(N-1)^2}\eta_k^2  +   m_N^{(\gamma)}(\eta_k)\right]
 |\varphi_k(\eta_k)|^2{\rm d}\sigma_N - \frac{C}{N^2}\|g\|_2^2\ .
 \end{equation}
 Introducing the variable $y := \frac {\eta_k}{N-1}$, so that $0 \leq y \leq \frac{N}{N-1}$ and recalling\eqref{WELOBND},
\begin{eqnarray*}
 \left[ \frac{1-\gamma}{(N-1)^2}\eta_k^2  +   m_N^{(\gamma)}(\eta_k)\right] 
 &=& (1-\gamma)y^2 + 1 + \gamma\left(\frac{1}{N-1} -y\right) - (1-\gamma)\left(\frac{1}{N-1} -y\right)^2 \\
 &=&  1 + \frac{\gamma}{N-1} - (1-\gamma)\frac{1}{(N-1)^2} + \left((1-\gamma)\frac{2}{N-1} - \gamma\right)y\ .
 \end{eqnarray*}
    We claim that 
    \begin{align*}
    1 + \frac{\gamma}{N-1} - (1-\gamma)\frac{1}{(N-1)^2} + \left((1-\gamma)\frac{2}{N-1} - \gamma\right)y
    \leq 1+\frac{6}{N}\ ,
    \end{align*}
    for any $\gamma\in[0,1]$ and $N\geq 2$.
    In fact, if $(1-\gamma)\frac{2}{N-1} - \gamma\leq 0$, then
        \begin{align*}
    1 + \frac{\gamma}{N-1} - (1-\gamma)\frac{1}{(N-1)^2} + \left((1-\gamma)\frac{2}{N-1} - \gamma\right)y
    \leq 1 + \frac{\gamma}{N-1}
    \leq 1+\frac{2}{N}\ ,
    \end{align*}
    otherwise
            \begin{align*}
    &1 + \frac{\gamma}{N-1} - (1-\gamma)\frac{1}{(N-1)^2} + \left((1-\gamma)\frac{2}{N-1} - \gamma\right)y\\
    &\leq 1 + \frac{\gamma}{N-1} - (1-\gamma)\frac{1}{(N-1)^2} + \left((1-\gamma)\frac{2}{N-1} - \gamma\right)\frac{N}{N-1}\\
    &=(1-\gamma)\left(1+\frac{2}{N-1}+\frac{1}{(N-1)^2}\right)
     \leq 1+\frac{6}{N}\ .
    \end{align*}
 For here is follows that for all $N\geq 2$ 
 \begin{equation} \label{GGBND2S} 
  \left[ \frac{1-\gamma}{(N-1)^2}\eta_k^2  +   m_N^{(\gamma)}(\eta_k)\right] \leq 1 + \frac{6}{N}\ .
 \end{equation}
 
  Therefore, \eqref{GGBND2R} becomes
   \begin{equation} \label{GGBND2RB} 
 \Dto(g,g)  \geq \|g\|_2^2 -  \frac1N  \sum_{k=1}^N\int_{\ST} \left( 1 + \frac{6}{N}\right)
 |\varphi_k(\eta_k)|^2{\rm d}\sigma_N - \frac{C}{N^2}\|g\|_2^2\ .
 \end{equation}
 By Theorem~\ref{CHAOS}, $\sum_{k=1}^N\|\varphi_k\|_2^2 \leq \left(1- \frac2N\right)^{-1}\|g\|_2^2  \leq \left(1+\frac{2}{N-2}\right)\|g\|_2^2$,  so that finally
 \eqref{GGBND1} follows from \eqref{GGBND2RB}. 
\end{proof}

\begin{lm}\label{SSBND} There is a finite constant $C$ independent of $N$ such that if $f\in \cH_N$ is orthogonal to the constants with the trial function decomposition \eqref{pdecomp} $f = s+g+h$,
\begin{equation}\label{SSBND1} 
\Dto(s,s)  \geq \|s\|_2^2 \left(1 -\frac{1}{N} - \frac{C}{N^2}\right) 
\ .
\end{equation}
\end{lm}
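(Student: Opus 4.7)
The plan is to mirror the structure of the proof of Lemma~\ref{GGBND}, with one crucial simplification: because $s$ lies in the $\phi_1$-eigenspace of $K$ on each coordinate and $\sum_{j=1}^N \alpha_j=0$, Lemma~\ref{SEIG} gives $P_k s = \tfrac{N}{N-1}\alpha_k\phi_1(\eta_k)$, and hence \eqref{sPUB1} is already an \emph{exact identity} for $\langle s,\widetilde P^{(\gamma)}s\rangle$, with no Lemma~\ref{gPUB}-style error term needed. This means that I only have to work to lower-bound the weight term $\int\widetilde W^{(\gamma)}s^2\,\dd\sigma_N$, and then carefully match the leading constants between the two pieces.

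Concretely, I would write $\Dto(s,s) = \int_{\ST}\widetilde W^{(\gamma)} s^2\,\dd\sigma_N - \langle s,\widetilde P^{(\gamma)} s\rangle$ and treat each term. For the first, apply the identity \eqref{wabnd1} for $\widetilde W^{(\gamma)}$ and then the estimate \eqref{gv4B} from Lemma~\ref{gv4lem} to obtain
\begin{equation*}
\int_{\ST}\widetilde W^{(\gamma)} s^2\,\dd\sigma_N \;\geq\; \left(1+\tfrac{1-\gamma}{(N-1)^2}\right)\|s\|_2^2 \;-\; \tfrac{1-\gamma}{(N-1)^2}\,\tfrac{1}{N}\sum_{k=1}^N \alpha_k^2\int_0^N\eta^2\phi_1(\eta)^2\,\dd\nu_N \;-\; \tfrac{C}{N^2}\|s\|_2^2.
\end{equation*}
For the second, substitute \eqref{sPUB1} directly. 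Combining yields
\begin{equation*}
\Dto(s,s) \;\geq\; \|s\|_2^2 \;-\; \tfrac{1}{N}\left(\sum_{k=1}^N\alpha_k^2\right)\mathcal{I}_N \;-\; \tfrac{C}{N^2}\|s\|_2^2,
\qquad
\mathcal{I}_N := \int_0^N R(\eta)\,\phi_1(\eta)^2\,\dd\nu_N,
\end{equation*}
where $R(\eta) := \tfrac{1-\gamma}{(N-1)^2}\eta^2 + \bigl(\tfrac{N}{N-1}\bigr)^2 m_N^{(\gamma)}(\eta)$.

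The one substantive calculation is to verify $\mathcal{I}_N \leq 1 + C/N$. Since $\int\phi_1^2\,\dd\nu_N=1$, the prefactor $\bigl(\tfrac{N}{N-1}\bigr)^2 = 1 + \tfrac{2}{N-1} + \tfrac{1}{(N-1)^2}$ contributes $1+O(1/N)$; the expansion $m_N^{(\gamma)}(\eta)=1+\gamma\tfrac{1-\eta}{N-1}-(1-\gamma)\tfrac{(1-\eta)^2}{(N-1)^2}$ contributes an $O(1/N)$ correction coming entirely from the bounded moment $\int(1-\eta)\phi_1(\eta)^2\,\dd\nu_N$ of the rescaled Beta$(1,N-1)$ law $\nu_N$, which is computed explicitly (it equals $-\tfrac{2(N-2)}{N+2}$); and the $\tfrac{1-\gamma}{(N-1)^2}\eta^2$ piece contributes only $O(1/N^2)$ since $\int\eta^2\phi_1^2\,\dd\nu_N$ is bounded. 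Combined with the bound $\sum_{k=1}^N \alpha_k^2 \leq (1-\tfrac{2}{N})^{-1}\|s\|_2^2 = (1+O(1/N))\|s\|_2^2$ from Theorem~\ref{CHAOS}, this gives $\Dto(s,s) \geq \|s\|_2^2\bigl(1 - \tfrac{1}{N} - \tfrac{C}{N^2}\bigr)$.

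The main obstacle is purely bookkeeping in the moment calculation that produces $\mathcal{I}_N=1+O(1/N)$: one must track that the $\gamma$-dependent first-order corrections from $\bigl(\tfrac{N}{N-1}\bigr)^2$ and from $m_N^{(\gamma)}$ combine to give a leading term of $1$ rather than something strictly larger, so that the factor $1/N$ in $-\mathcal{I}_N/N$ matches the target $-1/N$ in \eqref{SSBND1} to leading order. Once the identity \eqref{sPUB1} and the quadratic form \eqref{gv4B} are in place, there is no further conceptual difficulty.
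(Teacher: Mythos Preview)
Your proposal is correct and follows essentially the same route as the paper: combine the identity \eqref{wabnd1} with \eqref{gv4B} for the weight term, use \eqref{sPUB1} for $\langle s,\widetilde P^{(\gamma)}s\rangle$, and then control the single-site integral of $R(\eta)=\tfrac{1-\gamma}{(N-1)^2}\eta^2+\bigl(\tfrac{N}{N-1}\bigr)^2 m_N^{(\gamma)}(\eta)$ against $\phi_1^2$, finishing with Theorem~\ref{CHAOS}. The only difference is that the paper bounds $R(\eta)$ pointwise by $\bigl(\tfrac{N}{N-1}\bigr)^2\bigl(1+\tfrac{6}{N}\bigr)$, reusing the inequality \eqref{GGBND2S} already established in the proof of Lemma~\ref{GGBND}, whereas you compute the integral $\mathcal I_N=\int R\phi_1^2\,\dd\nu_N$ directly via moments; both yield $\mathcal I_N\le 1+C/N$, so the distinction is cosmetic.
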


\begin{proof} The estimates for  $\Dto(g,g)$ and $\Dto(s,s)$ are of the same form, and therefore, as the proof of Lemma~\ref{GGBND}, we obtain 
\eqref{GGBND2} with $g$ replaced by $s$ (and hence $\varphi_k$ replaced by $\psi_k$), and no other changes. 
However, the bound \eqref{sPUB1} on $\langle s, \widetilde{P}^{(\gamma)}s\rangle$ is worse than the bound on $\langle g, \widetilde{P}^{(\gamma)}g\rangle$ provided
by Lemma~\ref{gPUB} by a factor of $\left(\frac{N}{N-1}\right)^2$.  Altogether we then have, for $\gamma\in (0,1)$ so that \eqref{GGBND2S}  can be used for sufficiently large $N$, 
\begin{eqnarray*}
 \Dto(s,s)  &\geq& \|s\|_2^2 -  \frac1N  \sum_{k=1}^N\int_{\ST} \left[ \frac{1-\gamma}{(N-1)^2}\eta_k^2  +  \left(\frac{N}{N-1}\right)^2 m_N^{(\gamma)}(\eta_k) \right]
 |\psi_k(\eta_k)|^2{\rm d}\sigma_N - \frac{C}{N^2}\|g\|_2^2\\
 &\geq& \|s\|_2^2 -  \frac1N  \sum_{k=1}^N\int_{\ST} \left(\frac{N}{N-1}\right)^2\left[ \frac{1-\gamma}{(N-1)^2}\eta_k^2  +  m_N^{(\gamma)}(\eta_k) \right]
 |\psi_k(\eta_k)|^2{\rm d}\sigma_N - \frac{C}{N^2}\|g\|_2^2\\
 &\geq& \|s\|_2^2 -  \left(\frac{N}{N-1}\right)^{2}   \frac1N  \left( 1 + \frac{6}{N}\right)\sum_{k=1}^N \|\psi_k\|_2^2 - \frac{C}{N^2}\|g\|_2^2\ .
 \end{eqnarray*}
From here the proof proceeds as in the previous lemma.
\end{proof}

\bigskip
We are finally ready to prove Theorem~\ref{CONJMAIN2}, and thus to complete the proof of Theorem~\ref{CONJMAIN3}. 

\begin{proof}[Proof of Theorem~\ref{CONJMAIN2}] Combining Lemma~\ref{almorth} with  Lemmas~\ref{HHBND}, \ref{GGBND} and \ref{SSBND}, we have 
$$
\Dto(f,f)  \geq \left(1 - \frac1N - \frac{C}{N^{3/2}}\right)\|f\|_2^2.
$$
\end{proof}

\section{A broader class of models}

The fact that  with $T_N$ given by \eqref{TNDEF}, the push-forward of $\nu_N\otimes \sigma_{N-1}$ under $T_N$, is $\sigma_N$ underlies all of the 
chaoticity bounds derived in Section~\ref{CHABNDS}. This is true not only for the flat Dirichlet measure, but for a broader class of probability measures on $\STE$
that we now determine. 

The flat Dirichlet measure on $\STE$ arises by equipping $\R_+^N$ with the probability measure $\prod_{j=1}^N f(\eta_j){\rm d\eta_j}$ and then conditioning on
$\sum_{j=1}^N\eta_j =NE$ for the choice $f(\eta) = e^{-\eta}$.

Now, let $f$ be any continuously differentiable and strictly positive probability density on $\R_+$. For $\bset\in \R_+^N$, define $F(\bset) = \prod_{j=1}^N f(\eta_j)$ which is a 
probability density on $\R_+^N$.  Let $\nu_{f,N}$ denote the corresponding probability measure on $\R_+^N$. 

For $E>0$, define 
$\sigma_{f,N,E}$ to be the conditional distribution of $\nu_{f,N}$ on $\STE$. That is, for continuous $\varphi$ on $\R_+^N$, 
$$
{\mathbb E}_{\nu_{f,N}}\left\{ \varphi(\bset)\ \Bigg|\  \frac1N\sum_{j=1}^N \eta_j = E\right\} = \int_{\STE}\varphi(\bset){\rm d}\sigma_{f,N,E}\ .
$$

If $E = \int_{\R_+}\eta f(\eta){\rm d}\eta$, the mean of $f$, then the probability measure is very concentrated near $\STE$; various precise formulations are known under the rubric of ``equivalence of ensembles''.  Because of this,  $\sigma_{f,N,E}$ is approximately a product distribution, and the various coordinate functions $\eta_1,\dots,\eta_N$ are approximately independent random variables under  $\sigma_{f,N,E}$.
 
 For the symmetric Dirichlet distributions, and in particular, the flat Dirichlet distribution, this near independence is valid for all values of $E$, not only the mean of $f$, because of a scaling property that characterizes the Gamma distributions, from which they arise.

The different ``slices''  $\STE$ of $\R_+^N$, all simplexes, are related by scaling: For $t>0$ define $S_t:\R_+^N\to \R_+^N$ by 
$S_t(\bset) = t^{-1}\bset$.
Then $S_t$ is invertible from $\STE$ onto $\mathcal{S}_{N,tE}$.  Let $S_t\# \sigma_{N,E}$ denote the push-forward of $\sigma_{N,E}$  under $S_t$. That is, for $\varphi$ on $\mathcal{S}_{N,tE}$.
$$
\int_{\mathcal{S}_{N,tE}}\varphi(\bset){\rm d} S_t\# \sigma_{f,N,E} =
\int_{\STE} \varphi(S_t(\bset)){\rm d}\sigma_{f,N,E}=  \int_{\STE} \varphi(t^{-1}\bset){\rm d}\sigma_{f,N,E}\ .
$$
Recall that the Gamma distribution with parameters $\alpha,\lambda>0$ is the probability on $\R_+$ with density $f_{\alpha,\lambda}(x) :=\frac{\lambda^\alpha}{\Gamma(\alpha)}x^{\alpha-1}e^{-\lambda x}$.  The following theorem gives a characterization of the Gamma distribution that is relevant to the considerations of this paper.

\begin{thm} For all $t>0$, $S_t\# \sigma_{f,N,E} = \sigma_{f,N,tE}$ if and only if $f$ is a Gamma density. 
\end{thm}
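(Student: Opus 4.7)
The plan is to reduce the statement to a functional equation that forces the density $f$ to be log-linear plus log-power, i.e. a Gamma density. I would handle the two directions by a density computation for "if" and by a differentiation argument for "only if".

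For the "if" direction, I would just compute. For $f_{\alpha,\lambda}(x) = \frac{\lambda^\alpha}{\Gamma(\alpha)} x^{\alpha-1} e^{-\lambda x}$, the product measure density factorizes as
\[
\prod_{j=1}^N f_{\alpha,\lambda}(\eta_j) = \left(\frac{\lambda^\alpha}{\Gamma(\alpha)}\right)^N \Bigl(\prod_{j=1}^N \eta_j\Bigr)^{\alpha-1} e^{-\lambda \sum_j \eta_j},
\]
and the exponential is constant on $\STE$. Hence $\sigma_{f,N,E}$ has surface density on $\STE$ proportional to $(\prod_j \eta_j)^{\alpha-1}$, independent of $\lambda$ and homogeneous under rescaling. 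A direct change of variables then shows that pushing forward by the scaling $S_t$ gives exactly the analogous measure on $\mathcal{S}_{N,tE}$, so $S_t \# \sigma_{f,N,E} = \sigma_{f,N,tE}$.

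For "only if", I would first make the densities fully explicit. By disintegration, $\sigma_{f,N,E}$ has a density proportional to $\prod_j f(\eta_j)$ with respect to the $(N-1)$-dimensional surface measure $dS_E$ on $\STE$. Since $S_t$ scales $\STE$ onto $\mathcal{S}_{N,tE}$ with constant Jacobian $t^{N-1}$, the surface density of $S_t \# \sigma_{f,N,E}$ at $\boldsymbol{\zeta} \in \mathcal{S}_{N,tE}$ is proportional to $\prod_j f(t^{-1}\zeta_j)$. Equating densities with $\sigma_{f,N,tE}$ and letting $t$ and $E$ vary independently (so that $tE$ ranges over all of $\R_+$), there exists for each $t>0$ a constant $c(t)$ such that
\[
\prod_{j=1}^N f(t^{-1}\zeta_j) = c(t) \prod_{j=1}^N f(\zeta_j)
\]
whenever $\sum_j \zeta_j = M$, for every $M>0$. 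Setting $g := \log f$ (well-defined and $C^1$ by hypothesis), the function $\Phi_t(\boldsymbol{\zeta}) := \sum_j [g(t^{-1}\zeta_j) - g(\zeta_j)]$ depends only on $\sum_j \zeta_j$.

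The rest is an ODE argument. Applying tangential derivatives $\partial_j - \partial_k$ on a level set of $\sum_j \zeta_j$ gives
\[
t^{-1}g'(t^{-1}\zeta_j) - g'(\zeta_j) = t^{-1}g'(t^{-1}\zeta_k) - g'(\zeta_k),
\]
and since the pair $(\zeta_j,\zeta_k)$ ranges over all of $(0,\infty)^2$ as $M$ varies, the quantity $H_t(\zeta) := t^{-1}g'(t^{-1}\zeta) - g'(\zeta)$ is independent of $\zeta$. Differentiating in $\zeta$ yields $g''(t^{-1}\zeta) = t^2 g''(\zeta)$, so $g''$ is homogeneous of degree $-2$, giving $g''(\zeta) = A/\zeta^2$ and therefore $g'(\zeta) = -A/\zeta + B$, whence $f(\zeta) = C \zeta^{-A} e^{B\zeta}$. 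Integrability of $f$ on $\R_+$ forces $A<1$ and $B<0$, so setting $\alpha := 1-A>0$ and $\lambda := -B>0$ identifies $f$ as the Gamma density $f_{\alpha,\lambda}$.

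The main obstacle is the careful tracking of normalizations and Jacobians when writing out the surface-measure densities of $\sigma_{f,N,E}$ and $S_t\#\sigma_{f,N,E}$, and ensuring that the resulting equality genuinely holds pointwise on the simplex (not just in distribution) so that taking logarithms and tangential derivatives is legitimate. Once these bookkeeping steps are clean, the functional equation reduces to the elementary ODE above and the Gamma characterization drops out.
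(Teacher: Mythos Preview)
Your argument is correct and follows the same opening move as the paper: both of you observe that $S_t\#\sigma_{f,N,E}=\sigma_{f,N,tE}$ is equivalent to the ratio $\prod_j f(t\eta_j)/\prod_j f(t\xi_j)$ being independent of $t$ for $\bset,\boldsymbol{\xi}\in\STE$, and both then pass to $g=\log f$ to obtain a functional equation. The executions diverge at the differentiation step. The paper differentiates once in $t$ at $t=1$ to obtain that $\sum_j \eta_j g'(\eta_j)$ is constant on $\STE$, and then invokes a short additivity lemma (Lemma~\ref{Gamma}) to conclude that $\eta\mapsto \eta g'(\eta)$ is affine. You instead fix $t$, take a tangential derivative in the space variables to get that $t^{-1}g'(t^{-1}\zeta)-g'(\zeta)$ is independent of $\zeta$, and then differentiate once more in $\zeta$ to obtain the homogeneity $g''(t^{-1}\zeta)=t^2 g''(\zeta)$, which forces $g''(\zeta)=A/\zeta^2$. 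Both routes land on $g'(\zeta)=a+b/\zeta$ and hence the Gamma form. The paper's route is marginally cleaner in that it uses only the stated $C^1$ hypothesis on $f$, whereas your second differentiation in $\zeta$ tacitly needs $g''$; this is easily patched by arguing directly from the one-variable functional equation $t^{-1}g'(t^{-1}\zeta)-g'(\zeta)=c(t)$ without differentiating again (set $\zeta=1$ to identify $c(t)$, then rearrange), so the gap is cosmetic rather than substantive.
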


\begin{proof} It is clear that $S_t\# \sigma_{f,N,E} = \sigma_{f,N,tE}$ for all $t>0$  if and only if for all $\bset,{\boldsymbol \xi}\in \STE$,
$$
\frac{F(t\bset)}{F({t\boldsymbol \xi})} = \frac{F(\bset)}{F({\boldsymbol \xi})} 
$$
for all $t>0$. Define $h = \log f$. Then we can rewrite this as
${\displaystyle
\sum_{j=1}^N h(t\eta_j) - \sum_{j=1}^N h(t\xi_j) = C}$
for some constant $C$. Differentiating in $t$, and then setting $t=1$,
${\displaystyle
\sum_{j=1}^N(\eta_jh'(\eta_j) - \xi_jh'(\xi_j)) = 0
}$.
Define $\varphi(\eta) = \eta h'(\eta)$. Then $\sum_{j=1}^N \varphi(\eta_j)$ is constant on $\STE$. By the Lemma~\ref{Gamma}, this means that for some constant $a,b$, $\varphi(\eta) = a\eta + b$.

Therefore,  for some constants $a,b$, $h'(\eta) =  a+\frac{b}{\eta}$. It follows that for some $c\in \R$, 
$$
h(\eta) = a\eta + b\log \eta + c\ .
$$
This is the general form of the logarithm of a Gamma density, and hence $f$ is a Gamma density.
\end{proof}

\begin{lm}\label{Gamma} Let $\varphi$ be $C^1$ on $(0,\infty)$ and suppose that $\sum_{j=1}^N \varphi(\eta_j)$ is constant on $\STE$. Then for some $a,b\in \R$, 
$\varphi(\eta) = a\eta + b$. 
\end{lm}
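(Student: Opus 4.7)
The strategy is to reduce the condition ``$\sum_j\varphi(\eta_j)$ is constant on $\mathcal{S}_{N,E}$'' to a pointwise constraint on $\varphi'$. Since $\varphi\in C^1$, the function $G(\bset):=\sum_{j=1}^N\varphi(\eta_j)$ is $C^1$ on $\R_+^N$ and, being constant on the simplex $\mathcal{S}_{N,E}$, its gradient must be normal to that simplex at every (relative) interior point. The tangent space of $\mathcal{S}_{N,E}$ is the hyperplane $V=\{v\in\R^N:v_1+\cdots+v_N=0\}$, whose orthogonal complement is spanned by $(1,\dots,1)$. Since $\nabla G(\bset)=(\varphi'(\eta_1),\dots,\varphi'(\eta_N))$, this forces
\[
\varphi'(\eta_1)=\varphi'(\eta_2)=\cdots=\varphi'(\eta_N)
\]
at every $\bset\in\mathcal{S}_{N,E}$ with all $\eta_j>0$.

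Next, I would argue that the above pointwise identity sweeps out enough pairs $(u,v)$ to force $\varphi'$ to be constant on the coordinate range $(0,NE)$. Assuming $N\ge 3$, for any $u,v>0$ with $u+v<NE$ one can pick $\eta_1=u$, $\eta_2=v$, and choose $\eta_3,\dots,\eta_N>0$ summing to $NE-u-v$ (e.g.\ equally), producing a legitimate interior point. This yields $\varphi'(u)=\varphi'(v)$ for every such pair. In particular $\varphi'$ is constant on $(0,NE/2)$, and a single further comparison (any $u\in(0,NE)$ against a sufficiently small $v\in(0,NE/2)$) extends this to $\varphi'\equiv a$ on the whole interval $(0,NE)$. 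Integrating gives $\varphi(\eta)=a\eta+b$ on $(0,NE)$; since in the Gamma-characterization theorem the hypothesis holds for $tE$ in place of $E$ for every $t>0$, the identity extends to all of $(0,\infty)$.

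The only real subtlety, and the reason the argument actually needs $N\ge 3$, is that for $N=2$ the simplex is one-dimensional: one only obtains $\varphi'(\eta_1)=\varphi'(2E-\eta_1)$, i.e.\ $\varphi'$ symmetric about $E$, which does not force $\varphi$ to be affine. So the plan is to tacitly use $N\ge 3$ (consistent with the convention declared after Lemma~\ref{Delta2}); once $N\ge 3$, the configurational flexibility described above makes the argument essentially immediate, so there is no genuine analytic obstacle beyond verifying that the prescribed coordinate choices indeed lie in the open simplex.
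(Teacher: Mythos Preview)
Your argument is correct and complete for $N\ge 3$, which (as you note) is a standing assumption in the paper. The paper takes a different route: it sets $\psi(\eta)=\varphi(\eta)-\varphi(0)$ and compares the two configurations $(NE,0,\dots,0)$ and $(\lambda NE,(1-\lambda)NE,0,\dots,0)$ in $\STE$ to obtain the Cauchy-type identity $\psi(NE)=\psi(\lambda NE)+\psi((1-\lambda)NE)$ for all $\lambda\in(0,1)$, whence $\psi$ is linear.

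The two approaches differ in where they put the burden. The paper's proof is purely algebraic and works already for $N=2$, but it evaluates $\varphi$ at boundary points of the simplex and in particular uses $\varphi(0)$, which is not guaranteed by the stated hypothesis that $\varphi\in C^1(0,\infty)$. Your gradient argument stays in the relative interior of $\STE$ and so never needs $\varphi$ to extend continuously to $0$; the price is the (harmless) restriction to $N\ge 3$. Both proofs only determine $\varphi$ on $(0,NE]$, and your remark that the application supplies the hypothesis for every $tE$ is the right way to pass to all of $(0,\infty)$.
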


\begin{proof} Define $\psi(\eta) = \varphi(\eta) - \varphi(0)$. For any $\lambda \in (0,1)$,
$\psi(NE) = \psi(\lambda NE) + \psi((1-\lambda)NE)$
so $\psi$ that  is linear.
\end{proof}

This scaling property of the Gamma-Dirichlet measures leads directly to (3.1) in the paper, and is implied by it (for dimension $N-1$). The methods used in this paper can therefore be applied to the models with invariant measures generated by Gamma distributions that are considered in \cite{GKS12} and \cite{S15}, and the results of this section provide another perspective on why this class of models is natural. 

\medskip

\noindent{\Large \bf Acknowledgements}\ \ 
\smallskip

 We thank Domokos Sz\'asz for valuable discussion and for stimulating us to work on this problem, and we thank Pietro Caputo for valuable discussion. EC thanks the Dipartimento di Matematica, Università di Roma ``la Sapienza'', where part of this work was done while he was a visiting Professor, for its hospitality.

\end{document}